\newtheorem{theorem}{Theorem}[section]
\newtheorem{proposition}[theorem]{Proposition}
\newtheorem{corollary}[theorem]{Corollary}
\newtheorem{lemma}[theorem]{Lemma}
\newtheorem{definition}[theorem]{Definition}
\newtheorem{example}[theorem]{Example}
\newtheorem{remark}[theorem]{Remark}
\title{Grothendieck prelopologies: towards a closed monoidal sheaf category}
\author[1]{Ana Luiza Tenório}
\author[2]{Hugo Luiz Mariano}
\affil[1]{Department of Applied Mathematics, Federal University of Rio de
Janeiro, \texttt{anatenorio@im.ufrj.br;}}
\affil[3]{Department of Mathematics, University of São Paulo, \texttt{hugomar@ime.usp.br}}
\begin{document}
\maketitle
\begin{abstract}{
In this paper, we present a generalization of Grothendieck pretopologies -- suited for semicartesian categories with equalizers $C$ -- leading to a closed monoidal category of sheaves,  instead of closed cartesian category. This is proved through a different sheafification process, which is the left adjoint functor of the suitable inclusion functor but does not preserve all finite limits. If the monoidal structure in $C$ is given by the categorical product, all constructions coincide with those for Grothendieck toposes. The motivation for such generalization stems from a certain notion of sheaves on quantales that does not form a topos.   }

\end{abstract}

\keywords{sheaves \and quantales \and monoidal categories}

\section{Introduction}\label{sec:intro}

In \cite{luiza2024sheaves} we proposed a new definition of sheaves on semicartesian quantales (a non-idempotent generalization of locales) and showed that the category of sheaves on a quantale $Sh(Q)$ is not necessarily a topos.  This motivates a program towards a monoidal closed notion of elementary and Grothendieck topos. 
In the literature there are versions of sheaves on quantales that form a Grothendieck topos and also are attempts to generalize toposes \cite{coniglio2001modules},\cite{borceux1986quantales}, \cite{borceux1994generic}, \cite{miraglia1998sheaves}, \cite{fourman1979sheaves}, 
 \cite{aguilar2008sheaves}, \cite{heymans2012grothendieck}, \cite{resende2012groupoid}, \cite{mulvey1995quantales}, \cite{gylys2001sheaves}, \cite{coniglio2000non}, \cite{van2007virtual}, \cite{HOHLE199115}.

 On one hand, Van Oystaeyen's work in \cite{van2007virtual} has the approach most similar to ours since is the only one that explicitly changes the notion of a Grothendieck pretopology. Although we are not yet concerned with non-commutative geometry, we are interested in the cohomological aspects of the theory, which we do not develop here but are available at \cite{tenorio2023sheaves}. On the other hand, we are also interested in obtaining a novel notion of toposes that has a linear internal logic. In this perspective,  Coniglio and Miraglia's work in \cite{coniglio2001modules} has a similar spirit to what we would like to achieve in further investigation: their category of sheaves $Sh(Q)$ (different description of sheaves and for idempotent instead of semicartesian quantales) has an internal linear logic and they interpret  Kaplansky's theorem on projective modules in $Sh(Q)$ that translates to showing that every finitely generated projective module over a local ring is free with a finite basis.

Here we start by recalling a few definitions and results we proved in \cite{luiza2024sheaves}. In particular, we say that $\{u_i \in Q\}_{i \in I}$ covers $u$ if $u = \bigvee_{i \in I} u_i$. This is a cover in the sense of a Grothendieck pretopology if the quantale is a locale, but this is not true for quantales in general. Here we develop a generalization of Grothendieck pretopologies that encompass such notion of covering in $Q$, which we call Grothendieck prelopologies\footnote{We expect that sheaves under this cover will have some kind of linear logic as its internal language, which motivated us to choose this terminology.}. The Grothendieck prelopologies are a notion of covering that may be better suited for (semicartesian) monoidal categories with equalizers. If the monoidal structure is given by the categorical product, then Grothendieck prelopologies are precisely the well-known Grothendieck pretopologies. 

After we discuss the Grothendieck prelopologies, we introduce sheaves for them as a functor satisfying certain gluing conditions that can be translated in the format of an equalizer diagram, as usual, but we replace pullbacks by \textit{pseudo-pullbacks}. In this way, we obtain that the category of sheaves for  Grothendieck prelopologies with natural transformations as morphism is a closed monoidal category: the monoidal structure in the base category $C$ induces a monoidal structure in $PSh(C)$ given by the Day convolution. Then we show that the inclusion functor from sheaves for Grothendieck prelopologies to presheaves have a left adjoint functor (called sheafification) that induces the closed monoidal structure in the category of sheaves. The fact that $Sh(Q)$ is not a Grothendieck topos implies that our sheafification cannot preserve all finite limits. We did not explored yet under which circumstances our category of sheaves would be doubly closed monoidal category. 

We discuss the difficulties in defining Grothendieck lopologies and therefore in defining Grothendieck loposes in the usual way. Nevertheless, we conclude these notes by proposing that Grothendieck loposes should be a categorization of quantales in the same way that Grothendieck toposes are a categorization of locales.

We also have an appendix dedicated to study the the projections induced by the semicartesian structures of our categories. The main result in the appendix is Proposition \ref{prop:equalizes}, which may be already known but we could not find a proof. While it is a important result for us, verifying it requires a sequence of small lemmas that could distract from the main topic of the paper. 

We believe that further developments of this theory will be useful both for the categorical logician  interested in a generalization of toposes with a different internal logic and for those interested in sheaf methods in non-commutative geometry, by a careful look at sheaves on non-commutative quantales and simple adaptations in the axioms of the Grothendieck prelopologies. We are also interested in the possible connections with the approach to sheaves via quantaloid enrichment, as explored in \cite{heymans2012grothendieck}.

\textit{Remark: }To avoid size issues, all categories in the domain of a functor are small.

\section{Sheaves on quantales}
First we recall the definition of a locale, which is equivalent to the notion of a complete Heyting Algebra.

\begin{definition}\label{df:locale}
     A \textbf{locale} $(L,\leq)$ is a complete lattice such that 
 
    \begin{center}
      $a \wedge (\bigvee\limits_{i \in I} b_i) = \bigvee\limits_{i \in I}(a \wedge b_i)$, $ \forall a, b_i \in L$.
    \end{center}
\end{definition}

For any locale $L$, viewed as poset category, a presheaf on $L$ is a functor $F: L^{op} \to Set$. If $a \leq b$, we have a function $f: F(b) \to F(a)$, called \textit{restriction map}. Given $t \in F(U)$, we denote by $t_{|_a}$ the application $f(t)$.

\begin{definition}
  A presheaf $F: L^{op} \to Set$ is a \textbf{sheaf on $\mathcal{L}$} if for all $b \in L$ and all $b = \bigvee_{i \in I} b_i$ a cover of $b$ the diagram below is an equalizer in $Set$ 
\begin{center}
     \begin{tikzcd}\label{sheaf}
F (b) \arrow[r, "e"] & \prod\limits_{i\in I}F (b_i) \arrow[r, "p", shift left=1 ex] 
\arrow[r, "q"', shift right=0.5 ex]  & {\prod\limits_{(i,j) \in I \times I}F (b_i \wedge b_j)}
\end{tikzcd}
 \end{center}
 
 where:
 \begin{enumerate}
     \item $e(t) = \{t_{|_{b_i}} \enspace | \enspace i \in I\}, \enspace t \in F (b)$ 
     \item     $p((t_k)_{ k \in I}) = (t_{i_{|_{b_i \wedge b_j}}})_{(i,j)\in I\times I}$ \\ $q((t_k)_{k \in I}) = (t_{j_{|_{b_i \wedge b_j}}})_{(i,j)\in I\times I}, \enspace (t_k)_{k \in I} \in \prod\limits_{k\in I}F (b_k)$
 \end{enumerate}\label{df:sheaf_locale} 
\end{definition}

A morphism between sheaves is a natural transformation and then we obtain the category of sheaves on $L$, denoted by $Sh(L)$. The definition of sheaves on locales is well-established and the most famous case is for the locale of open subsets of a topological space, where the arbitrary join is the union and the binary meet is the intersection.

A quantale is a generalization of the notion of a locale.
\begin{definition}\label{df:quantale}
A \textbf{quantale} $Q$ is a complete lattice structure  $(Q, \leq)$ with an associative  binary operation $\odot: {Q} \times {Q} \to {Q}$ (called multiplication) such that for all $a\in  Q$ and $\{b_i\}_{i\in I} \subseteq Q$ the following distributive laws hold:
\begin{enumerate}
    \item $a \odot (\bigvee\limits_{i \in I}b_i) = \bigvee\limits_{i \in I}(a \odot b_i)$
    \item $(\bigvee\limits_{i \in I}b_i)  \odot a = \bigvee\limits_{i \in I}(b_i \odot a)$
\end{enumerate}
\end{definition}
If $\odot$ is the infimum operation then the quantale is a locale. 

\begin{definition}\label{df:quantalewithproperties}
A quantale $Q = (Q, \leq, \odot)$ is
\begin{enumerate}
    \item \textbf{commutative} when $(Q, \odot)$ is a commutative semigroup;
    \item  \textbf{idempotent} when $a \odot a = a$, for $a \in Q$;
    \item \textbf{right-sided} when $a \odot \top = a$, for all $a \in Q$, where $\top$ is the top member of the poset;
    \item \textbf{semicartesian}  when $a \odot b \leq a, b$, for all $a,b \in Q$;
    \item \textbf{integral} when $Q$ is unital and $1 = \top$.
\end{enumerate}
\end{definition}

 For our definition of a sheaf on a quantale we will require semicartesianity and for simplicity we will also assume that the quantale is unital. Some examples of such kind of quantales are $(\mathds{N}\cup \{\infty\}, \geq, +)$, $([0,\infty], \geq, +)$, and $(\mathcal{I}(R), \subseteq, . )$, where $\mathcal{I}(R)$ is the set  of ideals of a commutative and unital ring $R$ and the multiplication is the multiplication of ideals (the supremum is given by the sum of ideals).

\begin{remark} \label{quantale-re} Note that:
\begin{enumerate}
\item A quantale $(Q, \leq, \odot)$ is semicartesian iff $a\odot b \leq a \wedge b$, for all $a, b \in Q$.
\item Let $Q$ be a unital quantale, then it is integral iff it  is semicartesian.  Indeed: suppose that $Q$ is integral, since $b \leq \top$ we have $a \odot b \leq a \odot \top = a \odot 1 = a$, then $Q$ is semicartesian; conversely, suppose that $Q$ is semicartesian, since $\top = \top \odot 1 \leq 1$, then $\top = 1$.    
\end{enumerate}
\end{remark}

In locales, arbitrary sups distribute over infimun while in quantales arbitrary sups distribute over the multiplication. Therefore, we replace $\wedge$ by $\odot$ for the definition of sheaves on quantales.
\begin{definition}\label{df:sheaf-on-quantales}
  A presheaf $F:Q^{op} \to Set$ is a \textbf{sheaf on $Q$} when for all $u \in Q$ and all $u = \bigvee_{i \in I} u_i$ cover of $u$ the following diagram is an equalizer in $Set$
\begin{center}
     \begin{tikzcd}
F (u) \arrow[r, "e"] & \prod\limits_{i\in I}F (u_i) \arrow[r, "p", shift left=1 ex] 
\arrow[r, "q"', shift right=0.5 ex]  & {\prod\limits_{(i,j) \in I \times I}F (u_i \odot u_j)}
\end{tikzcd}
 \end{center}
  where:
 \begin{enumerate}
     \item $e(t) = \{t_{|_{u_i}} \enspace | \enspace i \in I\}, \enspace t \in F (u)$ 
     \item     $p((t_k)_{ k \in I}) = (t_{i_{|_{u_i \odot u_j}}})_{(i,j)\in I\times I}$ \\ $q((t_k)_{k \in I}) = (t_{j_{|_{u_i \odot u_j}}})_{(i,j)\in I\times I}, \enspace (t_k)_{k \in I} \in \prod\limits_{k\in I}F (u_k)$
 \end{enumerate}
\end{definition}

Morphisms between sheaves on quantales are natural transformations and we denote the category of sheaves on quantales by $Sh(Q)$. Notice that the maps $F(u_i) \to F(u_i \odot u_j)$ exist because $u_i \odot u_j $ always is less or equal to $u_i$ and $u_j$, for all $i,j\in I$.

In \cite{luiza2024sheaves}, we show that sheaves on quantales and sheaves on locales behave similarly at the same time that, in general, the category $Sh(Q)$ is not a topos. Therefore, what kind of category is $Sh(Q)$?  We will discuss this in the following sections. The main problem that we solve consists of generalizing Grothendieck pretolopogies: note that $\bigvee_{i \in I} u_i = u$ in a locale is a covering in the sense of a Grothendieck pretopology but this is not true for quantales because, in general, the stability under pullback does not hold. 

\section{Grothendieck prelopologies}\label{sec:Grothendieckprelopologies}

First, we recall the definition of a Grothendieck pretopology.
\begin{definition}\label{df: grothendieck pretop} 
   Let $\mathcal{C}$ be a small category with finite limits (or just with pullbacks). A \textbf{Grothendieck pretopology} on $\mathcal{C}$ associates to each object $U$ of $\mathcal{C}$ a set $K(U)$ of families of morphisms $\{U_i \rightarrow U\}_{i \in I}$ satisfying some rules:
    \begin{enumerate}
        \item The singleton family $\{U' \xrightarrow{f} U \}$, formed by an isomorphism $f : U' \overset{\cong}\to U$,  is in $K(U)$;
                
        \item If $\{U_i \xrightarrow{f_i} U\}_{i \in I}$ is in $K(U)$ and $\{V_{ij} \xrightarrow{g_{ij}} U_i \}_{j \in J_i}$ is in $K(U_i)$ for all $i \in I$, then $\{V_{ij} \xrightarrow{f_i \circ g_{ij}} U \}_{i \in I, j \in J_i}$ is in $K(U)$; 
        \item If $\{U_i \rightarrow U\}_{i \in I}$ is in $K(U)$, and $V \rightarrow U$ is any  morphism in $\mathcal{C}$, then the family of pullbacks $\{V \times_U U_i \rightarrow V \}$ is in $K(V)$.
    \end{enumerate}
 \end{definition}
    Some authors also call the above of \textbf{a basis for a Grothendieck topology}.

If our category is given by a locale, then the pullback is the infimum. Obverse that $u = \bigvee_{i \in I} u_i$ gives a Grothendieck pretopology. We  check the third axiom (stability under pullback) since it is the most relevant one to us. Take $v \leq u$, then $$\bigvee_{i \in I}(v \wedge u_i) = v \wedge (\bigvee_{i \in I} u_i) = v \wedge u = v. $$
Thus, $v \wedge u_i$ cover $v$, as desired.

So the distributivity plays an essential role and we want to keep it when dealing with quantales. However, the multiplication $\odot$ is not the pullback in $Q$. In the next definition we aim to generalize the notion of a pullback to encompass the quantalic multiplication. Since our quantales are semicartesian, we also want \textit{semicartesian categories}, i.e., a monoidal $(\mathcal{C},\otimes,1)$ category in which the unit $1$ also is the terminal object. We use $!_X$ to denote the unique morphism from an object $X$ to the terminal object.

\begin{definition}
Let $(\mathcal{C},\otimes,1)$ be a semicartesian monoidal category with equalizers. The \textbf{pseudo-pullback} of morphisms $f: A \to C$ and $g: B \to C$ is the equalizer of the parallel arrows \begin{tikzcd}
	{A \otimes B} & C
	\arrow["{g\circ \pi_2}"', shift right=1, from=1-1, to=1-2]
	\arrow["{f \circ \pi_1}", shift left=1, from=1-1, to=1-2]
\end{tikzcd} where $\pi_1 = \rho_A \circ (id_A \otimes !_B)$ and  $\pi_2 = \lambda_B \circ (!_A \otimes id_B)$
\end{definition}

Diagrammatically, the pseudo-pullback is the object $A\tensor[_{f}]{\otimes}{_{g}}B $ with the equalizer arrow in the following equalizer diagram

\[\begin{tikzcd}
	{A\tensor[_{f}]{\otimes}{_{g}}B} \\
	& {A\otimes B} & B \\
	& A & C
	\arrow["g", from=2-3, to=3-3]
	\arrow["f"', from=3-2, to=3-3]
	\arrow["{\pi_2}", from=2-2, to=2-3]
	\arrow["{\pi_1}"', from=2-2, to=3-2]
	\arrow["e'", from=1-1, to=2-2]
	\arrow["{p_1}"', curve={height=6pt}, from=1-1, to=3-2]
	\arrow["{p_2}", curve={height=-12pt}, from=1-1, to=2-3]
\end{tikzcd}\]

The arrows $p_1$ and $p_2$ are just the compositions $\pi^1 \circ e'$ and $\pi_2 \circ e'$, respectively. When needed, we will indicate the domain of the projections using $p^j_{A,B}$ and $\pi^j_{A,B}$, where $j = 1,2$, for example.

\begin{example}\label{exa:pb_is_pseudopullback}
    If the monoidal tensor is the categorical product, then the pseudo-pullback is the pullback.
\end{example}
\begin{example}\label{exa:psedo-pb_is_multiplication}
If $(\mathcal{C},\otimes,1) = (Q, \odot , 1)$, then the pseudo-pullback is the multiplication $\odot$.
\end{example}

\begin{example}
    Let $(\mathcal{C},\otimes,I)$ be a monoidal category. The slice category $\mathcal{C}/I$ is a semicartesian monoidal category with tensor defined by $$(A \xrightarrow{\phi} I)\otimes_{\mathcal{C}/I}(B\xrightarrow{\psi} I) = (A\otimes B \xrightarrow{\phi \otimes \psi} I\otimes I \cong I). $$

    If 
\begin{tikzcd}[ampersand replacement=\&]
	E \& A \& B
	\arrow["q"', shift right=1, from=1-2, to=1-3]
	\arrow["p", shift left=1, from=1-2, to=1-3]
	\arrow["e", from=1-1, to=1-2]
\end{tikzcd} is an equalizer diagram in $(\mathcal{C},\otimes,I)$  then 
\[\begin{tikzcd}[ampersand replacement=\&]
	E \& A \& B \\
	\& I
	\arrow["q"', shift right=1, from=1-2, to=1-3]
	\arrow["p", shift left=1, from=1-2, to=1-3]
	\arrow["e", from=1-1, to=1-2]
	\arrow["\phi"', from=1-2, to=2-2]
	\arrow["\psi", from=1-3, to=2-2]
	\arrow["{\phi\circ e}"', from=1-1, to=2-2]
\end{tikzcd}\]
is an equalizer diagram in $(\mathcal{C}/I,\otimes_{\mathcal{C}/I},I)$.
    In particular, suppose that $(\mathcal{C},\otimes,I)$ is the category of $R$-modules, where $R$ is a commutative ring with unity, the monoidal structure is given by the tensor product of $R$-modules and the monoidal unity is the ring $R$. Then the objects in $(\mathcal{C}/I,\otimes_{\mathcal{C}/I},I)$ are module homomorphisms $M\to R$, where $M$ is an $R$-module. Now, $R$ is not only the monoidal unity but also the terminal object in $(\mathcal{C}/I,\otimes_{\mathcal{C}/I},I)$. So we have ``projections'' $\pi_1:M \otimes_R N \to M \otimes_R R \to M$ and  $\pi_2:M \otimes_R N \to R \otimes_R N \to N$. Then the pseudo-pullback of $f:M \to O$ and $g: N \to O$ is obtained from the equalizer of 
\begin{tikzcd}[ampersand replacement=\&]
	{M\otimes_R N} \& O
	\arrow["{g\circ \pi_2}", shift left=1, from=1-1, to=1-2]
	\arrow["{f \circ\pi_1}"', shift right=1, from=1-1, to=1-2]
\end{tikzcd} in the category of $R$-modules and module homomorphisms.
    
\end{example}

\begin{example}\label{exa:ppb_product_category}
    Consider two categories $(\mathcal{C},\otimes,1_{\mathcal{C}})$ and $(\mathcal{D},\star,1_{\mathcal{D}})$ both with pseudo-pullbacks. Then the product category $\mathcal{C} \times \mathcal{D}$ has pseudo-pullbacks: given $C, C'$ objects in $\mathcal{C}$ and $D, D'$ objects in $\mathcal{D}$. Define $$(C,D) \otimes_{\mathcal{C}\times \mathcal{D}} (C',D') = (C\otimes C', D \star D')$$
    If $C\tensor[_{f}]{\otimes}{_g}C'$ is the pseudo-pullback of $f: C \to E$ and $g: C' \to E$, and  $D\tensor[_{\phi}]{\star}{_{\psi}}D'$ is the pseudo-pullback of $\phi : D \to F$ and $\psi: D' \to F$, then the pseudo-pullback of  $(f,\phi): (C,D) \to (E,F)$ and $(g,\psi):(C',D')\to (E,F)$ is $(C\tensor[_{f}]{\otimes}{_{g}}C',D\tensor[_{\phi}]{\star}{_{\psi}}D')$. In particular, we can take $(\mathcal{C},\otimes,1_{\mathcal{C}}) = (\mathcal{C},\times, 1)$ and $(\mathcal{D},\star,1_{\mathcal{D}}) = (Q,\odot,1)$.  
\end{example}

The reader may think that our generalization of Grothendieck pretopologies requires a stability under pseudo-pullbacks, but this is not enough since if $u = \bigvee_{i \in I}u_i$ and $v \leq u$ then
$$\bigvee_{i \in I} v \odot u_i = v \odot \bigvee_{i \in I} u_i = v \odot u \leq v$$

However, for any $v \in Q$ we have that $\bigvee_{i \in I} v \odot u_i = v \odot \bigvee_{i \in I} u_i = v \odot u$. So we define:

\begin{definition}
         Let $(\mathcal{C},\otimes,1)$ be a  monoidal category. A \textbf{weak Grothendieck prelopology} on $\mathcal{C}$ associates to each object $U$ of $\mathcal{C}$ a set $L(U)$ of families of morphisms $\{U_i \rightarrow U \}_{i \in I}$ such that:
        \begin{enumerate}
        \item The singleton family $\{U' \xrightarrow{f} U \}$, formed by an isomorphism $f : U' \overset{\cong}\to U$,  is in $L(U)$;
        
        \item If $\{U_i \xrightarrow{f_i} U\}_{i \in I}$ is in $L(U)$ and $\{V_{ij} \xrightarrow{g_{ij}} U_i \}_{j \in J_i}$ is in $L(U_i)$ for all $i \in I$, then $\{V_{ij} \xrightarrow{f_i \circ g_{ij}} U \}_{i \in I, j \in J_i}$ is in $L(U)$; 
        \item If $\{ f_i : U_i \to U\}_{i \in I} \in L(U)$, then $\{ f_i \otimes id_V : U_i \otimes V \to U \otimes V\}_{i \in I}$ is in $L(U\otimes V)$ and $\{ id_V \otimes f_i : V \otimes U_i \to V \otimes U\}_{i \in I}$ is in $L(V\otimes U)$, for any $V$ object in $\mathcal{C}$.
        \end{enumerate}
\end{definition}

Since we are not using the pseudo-pullback yet, $(\mathcal{C},\otimes,1)$ does not need to be semicartesian.
\begin{example}
    \begin{enumerate}
        \item Let $R$ be a commutative ring with unity. The category of $R$-algebras is monoidal with the tensor product of algebras as the product and $R$ as the unit. Consider $M$ an $R$-module and $I$ an index set. Define $$\{M_i \to M \}_{i \in I} \in L(M) \iff M \cong \oplus_{i \in I}M_i$$

        The first axiom clearly holds. The second is also simple: if $M = \oplus_{i \in I}M_i$ and $M_i = \oplus_{j \in J_i} M_{ij}$ for all $i \in I$, then $M \cong \oplus_{i \in I}\oplus_{j \in J} M_{ij} \cong  \oplus_{i \in I, j \in J_i} M_{ij}$. 

        The third axiom holds because the tensor product distributes over direct sum, i.e., if $N$ in a $R$-module, that is an isomorphism $N\otimes(\oplus_{i\in I}M_i)\cong \oplus_{i\in I}(N\otimes M_i)$.  

        \item In $Top$, the category of topological spaces with continuous functions, define:
        \[
        \{X_i \to X\} \in L(X) \iff X \approx \coprod X_i,
        \]
        where $\coprod X_i$ is the disjoint union of topological spaces $X_i$.  Similarly to the above case we have $X \approx \coprod_{i \in I, j \in J_i} X_{ij}$ and $Y\times(\coprod_{i\in I}X_i)\approx \coprod_{i\in I}(Y\times X_i)$

        \item In $Top_{\{*\}}$, the category of pointed  topological spaces, also has a similar behavior. Replace the product of topological spaces with the smash product and the  disjoint union with the wedge sum.
    \end{enumerate}
\end{example}

As we are going to see, if $\mathcal{C}$ is a category with pullbacks and $K$ is Grothendieck pretopology, then $K$ is a (weak) Grothendieck prelopology. However, we want to also obtain that if $K$ is a weak Grothendieck prelopology and the monoidal tensor is the categorical product, then $K$ is a Grothendieck pretopology. This does not hold for weak prelopologies but does hold if we add another axiom.

\begin{definition}
        Let $(\mathcal{C},\otimes,1)$ be a semicartesian monoidal category with pseudo-pullbacks. A \textbf{Grothendieck prelopology} on $\mathcal{C}$ associates to each object $U$ of $\mathcal{C}$ a set $L(U)$ of families of morphisms $\{U_i \rightarrow U \}_{i \in I}$ such that:
        \begin{enumerate}
        \item The singleton family $\{U' \xrightarrow{f} U \}$, formed by an isomorphism $f : U' \overset{\cong}\to U$,  is in $L(U)$;
        
        \item If $\{U_i \xrightarrow{f_i} U\}_{i \in I}$ is in $L(U)$ and $\{V_{ij} \xrightarrow{g_{ij}} U_i \}_{j \in J_i}$ is in $L(U_i)$ for all $i \in I$, then $\{V_{ij} \xrightarrow{f_i \circ g_{ij}} U \}_{i \in I, j \in J_i}$ is in $L(U)$; 
        \item If $\{ f_i : U_i \to U\}_{i \in I} \in L(U)$, then $\{ f_i \otimes id_V : U_i \otimes V \to U \otimes V\}_{i \in I}$ is in $L(U\otimes V)$ and $\{ id_V \otimes f_i : V \otimes U_i \to V \otimes U\}_{i \in I}$ is in $L(V\otimes U)$, for any $V$ object in $\mathcal{C}$;
        \item If $\{U_i \xrightarrow{f_i} U\}_{i \in I}$ is in $L(U)$ and $g: V \to U$ is any morphism in $\mathcal{C}$, then $\{\phi_i : U_i\tensor[_{f_i}]{\otimes}{_{g}}V \to U\tensor[_{id_U}]{\otimes}{_{g}}V\}_{i\in I}$ is in $ L(U\tensor[_{id_U}]{\otimes}{_{g}}V)$ and $\{\phi_i : V\tensor[_{g}]{\otimes}{_{f_i}}U \to V\tensor[_{g}]{\otimes}{_{id_U}}U\}_{i\in I}$ is in $ L(V\tensor[_{g}]{\otimes}{_{id_U}}U)$.
        \end{enumerate}
\end{definition}\label{df:prelopolgy}
\begin{remark}\label{diagram:axiom4}
For each $i \in I$, the arrow $\phi_i : U_i\tensor[_{f_i}]{\otimes}{_{g}}V \to U\tensor[_{id_U}]{\otimes}{_{g}}V$ is unique because of the universal property of the equalizer:
\[\begin{tikzcd}
	{U_i\tensor[_{f_i}]{\otimes}{_{g}}V} && {U_i \otimes V} && {U_i} \\
	{U\tensor[_{id_U}]{\otimes}{_{g}}V} && {U \otimes V} && U \\
	&&& V
	\arrow["{f_i}", from=1-5, to=2-5]
	\arrow["{\pi^i_1}", from=1-3, to=1-5]
	\arrow["{\pi_1}", from=2-3, to=2-5]
	\arrow["{f_i \otimes id_V}"', from=1-3, to=2-3]
	\arrow["{e_i}", tail, from=1-1, to=1-3]
	\arrow["{\pi_2}"', from=2-3, to=3-4]
	\arrow["g"', from=3-4, to=2-5]
	\arrow["e"', tail, from=2-1, to=2-3]
	\arrow["{\phi_i}"', dashed, from=1-1, to=2-1]
\end{tikzcd}\]
\end{remark}

For noncommutative geometry, it may be interesting to study a version of this definition where only one side of the third and the fourth axioms hold.  

\begin{example}\label{ex:prelopology_in_quantales}
If $(\mathcal{C}, \otimes, 1)$ is a semicartesian quantale $(Q, \odot, 1)$, then $$\{f_i: U_i \to U\} \in L(U) \iff U = \bigvee_{i\in I}U_i$$ determines a Grothendieck prelopology: Axioms $1$ and $2$ are immediate, axiom $3$ was proved in the previous discussion. The last axiom follows from observing that $U_i\tensor[_{f_i}]{\otimes}{_{g}}V = U_i \otimes V$ (see \ref{exa:psedo-pb_is_multiplication} ) and then realizing that $\phi_i = f_i \otimes id_V$ for all $i \in I$. In other words, in the quantalic case the third axiom, implies the fourth one.

Of course, if $(Q, \odot, 1)$ is a locale, then the above prelopology coincides with the usual Grothendieck pretopology in $Q$ --  because $v\leq u$ iff $v \wedge u = v= u \wedge v $.
\end{example}

Now, note that if the tensor product is given by the cartesian product then we have a diagram of the following form
\[\begin{tikzcd}[ampersand replacement=\&]
	{U_i\times_U V} \&\& {U_i \times V} \&\& {U_i} \\
	{U\times_U V} \&\& {U \times V} \&\& U \\
	\&\&\& V
	\arrow["{f_i}", from=1-5, to=2-5]
	\arrow["{\pi^i_1}", from=1-3, to=1-5]
	\arrow["{\pi_1}", from=2-3, to=2-5]
	\arrow["{f_i \times id_V}"', from=1-3, to=2-3]
	\arrow["{e_i}", tail, from=1-1, to=1-3]
	\arrow["{\pi_2}"', from=2-3, to=3-4]
	\arrow["g"', from=3-4, to=2-5]
	\arrow["e"', tail, from=2-1, to=2-3]
	\arrow["{\phi_i}"', dashed, from=1-1, to=2-1]
\end{tikzcd}\]

In Proposition \ref{prop:prelopology_in_cartesiancategory_is_pretology}, we prove that in such case the fourth axiom is the stability (under pullbacks) axiom of the definition of a Grothendieck pretopology. Since, the stability under pullback implies that $\{U_i \times V \xrightarrow{f \times id_V} U \times V\}$ cover $U\times V$ (proved in Proposition \ref{pro:everypretopology_is_a_prelopology} bellow) we can say that, in the cartesian case, the fourth axiom implies the third one.

So the following proposition shows that we are generalizing Grothendieck pretopologies.

\begin{proposition}\label{pro:everypretopology_is_a_prelopology}
If $\mathcal{C}$ is a cartesian category with equalizers and $K$ is a Grothendieck pretopology in $\mathcal{C}$, then $K$ is a Grothendieck prelopology.
\end{proposition}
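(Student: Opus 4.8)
The plan is to verify the four axioms of a Grothendieck prelopology for $K$ directly, exploiting that here the monoidal product is the categorical product, so that by Example~\ref{exa:pb_is_pseudopullback} every pseudo-pullback occurring in the axioms is an honest pullback in $\mathcal{C}$. Axioms~1 and~2 of a prelopology are literally axioms~1 and~2 of a pretopology, so nothing is needed there. Before treating axioms~3 and~4, I would record the standard fact that any Grothendieck pretopology is closed under passing to an isomorphic family and under transport along an isomorphism of the base object: if $\{g_i\colon W_i\to U\}_{i\in I}\in K(U)$ and $h_i\colon W_i'\xrightarrow{\cong} W_i$, then each singleton $\{h_i\}$ lies in $K(W_i)$ by axiom~1, so $\{g_i\circ h_i\}\in K(U)$ by axiom~2; and if $k\colon U'\xrightarrow{\cong}U$, pulling $\{g_i\}$ back along $k$ (axiom~3) and using that $U'\times_U W_i\cong W_i$ shows $\{k^{-1}\circ g_i\colon W_i\to U'\}\in K(U')$. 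This lets us identify covers living on a pullback with covers living on any object canonically isomorphic to it.

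For axiom~3, given $\{f_i\colon U_i\to U\}_{i\in I}\in K(U)$ and an object $V$, I would apply stability under pullback (axiom~3 of $K$) to the projection $\pi\colon U\times V\to U$: the family $\{(U\times V)\times_U U_i\to U\times V\}_{i\in I}$ lies in $K(U\times V)$, and since the square with corners $U_i\times V,\ U_i,\ U\times V,\ U$ and maps $f_i\times id_V,\ f_i$ and the two projections is a pullback, the transported family is exactly $\{f_i\times id_V\colon U_i\times V\to U\times V\}_{i\in I}$. The symmetric statement for $\{id_V\times f_i\}$ follows the same way, pulling back along the projection $V\times U\to U$.

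For axiom~4, given $\{f_i\colon U_i\to U\}_{i\in I}\in K(U)$ and $g\colon V\to U$, the diagram of Remark~\ref{diagram:axiom4} becomes the cartesian diagram displayed just above the proposition. Here $U\tensor[_{id_U}]{\times}{_g}V$ is the pullback of $id_U$ along $g$, canonically isomorphic to $V$ with second projection an isomorphism and first projection $g$; chasing the universal property defining $\phi_i$ in Remark~\ref{diagram:axiom4} (namely $e\circ\phi_i=(f_i\times id_V)\circ e_i$, whence $\pi_2\circ e\circ\phi_i=\pi_2^i\circ e_i$), one checks that under this identification $\phi_i$ becomes the pullback projection $U_i\times_U V\to V$ of $f_i$ along $g$. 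Hence, by the isomorphism-closure recorded above, $\{\phi_i\}_{i\in I}\in K\big(U\tensor[_{id_U}]{\times}{_g}V\big)$ iff $\{U_i\times_U V\to V\}_{i\in I}\in K(V)$, and the latter is precisely stability under pullback of $K$ applied to $g$. The other half of axiom~4, with $\{V\tensor[_g]{\times}{_{f_i}}U\to V\tensor[_g]{\times}{_{id_U}}U\}$, is proved identically.

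The main obstacle is entirely bookkeeping: the one step requiring care is verifying in axiom~4 that the arrow $\phi_i$ produced by the equalizer (= pullback) universal property really coincides, modulo the canonical isomorphism $U\times_U V\cong V$, with the pullback projection of $f_i$ along $g$; together with the preliminary isomorphism-closure of pretopologies, which makes the transport between $U\times_U V$ and $V$ legitimate. Once $\otimes=\times$ collapses pseudo-pullbacks to pullbacks, everything else is a direct transcription of the pretopology axioms.
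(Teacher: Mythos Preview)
Your proposal is correct and follows essentially the same approach as the paper: axioms~1 and~2 are immediate, axiom~3 is obtained by pulling back along the projection $U\times V\to U$ exactly as you describe, and for axiom~4 the paper uses the pullback lemma to show that the square with vertices $U_i\times_U V$, $U_i$, $U\times_U V$, $U$ is itself a pullback, so stability applies directly without first identifying $U\times_U V\cong V$. Your route through the identification $U\times_U V\cong V$ and isomorphism-closure is equivalent and equally valid---it is in fact the very argument the paper uses for the \emph{converse} direction (Proposition~\ref{prop:prelopology_in_cartesiancategory_is_pretology}).
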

\begin{proof}

In this case, the pseudo-pullback is the pullback. The first two axioms are automatically satisfied. It remains to prove the last two. Let $\{f_i : U_i \to U\} \in K(U)$

$3.$ It holds because  
\[\begin{tikzcd}
	{U_i\times V} & {U \times V} \\
	U_i &  U
	\arrow["{\pi_1}", from=1-2, to=2-2]
	\arrow["f_i"', from=2-1, to=2-2]
	\arrow["{f_i \times id_V}", from=1-1, to=1-2]
	\arrow["{\pi^i_1}"', from=1-1, to=2-1]
\end{tikzcd}\]
is a pullback diagram.

$4.$ Observe the following diagram, where we use the same notation of Remark \ref{diagram:axiom4}. 
\[\begin{tikzcd}
	{U_i \times_U V} & {U_i} \\
	{U\times_U V} & U \\
	V & U
	\arrow["{\phi_i}"', from=1-1, to=2-1]
	\arrow["{\pi^1_i \circ e_i}", from=1-1, to=1-2]
	\arrow["{f_i}", from=1-2, to=2-2]
	\arrow["{\pi^1\circ e}", from=2-1, to=2-2]
	\arrow["{id_U}", from=2-2, to=3-2]
	\arrow["{\pi_2 \circ e}"', from=2-1, to=3-1]
	\arrow["g"', from=3-1, to=3-2]
\end{tikzcd}\]

The outer rectangle is a pullback, and the square on the bottom is a pullback. So, by the pullback lemma, the square on the top is a pullback. Since $K$ is a Grothendieck topology, $\phi_i  \in L(U \times_U V)$. 

\end{proof}

Conversely, 

\begin{proposition}\label{prop:prelopology_in_cartesiancategory_is_pretology}
    If $L$ is a Grothendieck prelopology on a cartesian category with equalizers $\mathcal{C}$, then $L$ is a Grothendieck pretopology.
\end{proposition}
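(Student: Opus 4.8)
\section*{Proof proposal}

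The plan is to exploit Example~\ref{exa:pb_is_pseudopullback}: since the monoidal tensor is the categorical product, the pseudo-pullback coincides with the ordinary pullback, so the fourth axiom of a Grothendieck prelopology is literally a statement about genuine pullback squares. The first two axioms of a Grothendieck pretopology are word-for-word the first two axioms of a prelopology, hence the only thing left to establish is stability under pullback: given $\{f_i\colon U_i\to U\}_{i\in I}\in L(U)$ and an arbitrary morphism $g\colon V\to U$, we must produce $\{V\times_U U_i\to V\}_{i\in I}\in L(V)$.

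First I would apply the fourth axiom of the prelopology to the family $\{f_i\colon U_i\to U\}_{i\in I}$ and the morphism $g\colon V\to U$, which gives $\{\phi_i\colon U_i\times_U V\to U\times_U V\}_{i\in I}\in L(U\times_U V)$, where $\phi_i$ is the canonical arrow of Remark~\ref{diagram:axiom4} (now with $\otimes=\times$). The key observation is that the pullback of $g$ along the \emph{identity} $id_U\colon U\to U$ is canonically isomorphic to $V$: writing $e\colon U\times_U V\to U\times V$ for the equalizer arrow (of $\pi_1$ and $g\circ\pi_2$), the composite $\tau:=\pi_2\circ e\colon U\times_U V\to V$ is an isomorphism, its inverse being the factorization of $\langle g,id_V\rangle\colon V\to U\times V$ through $e$ (note $\langle g,id_V\rangle$ equalizes $\pi_1$ and $g\circ\pi_2$). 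By the first axiom, $\{\tau\colon U\times_U V\to V\}\in L(V)$, and by the second (composition) axiom applied to the singleton family $\{\tau\}$ together with $\{\phi_i\}_{i\in I}\in L(U\times_U V)$, we obtain $\{\tau\circ\phi_i\colon U_i\times_U V\to V\}_{i\in I}\in L(V)$.

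It then remains to identify $\tau\circ\phi_i$ with the pullback projection $V\times_U U_i\to V$. Chasing the diagram of Remark~\ref{diagram:axiom4}, $\phi_i$ is characterized by $e\circ\phi_i=(f_i\times id_V)\circ e_i$, so
\[
\tau\circ\phi_i=\pi_2\circ e\circ\phi_i=\pi_2\circ(f_i\times id_V)\circ e_i=\pi^i_2\circ e_i,
\]
where $\pi^i_2\colon U_i\times V\to V$ is the second projection. But $U_i\times_U V$ equipped with $\pi^i_1\circ e_i$ and $\pi^i_2\circ e_i$ is, by construction together with Example~\ref{exa:pb_is_pseudopullback}, the pullback of $f_i$ and $g$, i.e.\ $V\times_U U_i$ up to the symmetry isomorphism of the product, and under this identification $\pi^i_2\circ e_i$ is precisely the projection onto $V$. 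Hence $\{V\times_U U_i\to V\}_{i\in I}=\{\tau\circ\phi_i\}_{i\in I}\in L(V)$, which is exactly the stability axiom, so $L$ is a Grothendieck pretopology.

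The main obstacle I expect is purely bookkeeping in this last step: making fully precise the identification of $U\times_U V$ (pullback along $id_U$) with $V$, of $U_i\times_U V$ with $V\times_U U_i$, and checking that the universal arrows $\phi_i$, $e_i$, $e$ fixed in Remark~\ref{diagram:axiom4} transport to the expected projections once the symmetry isomorphisms of the cartesian structure are threaded through. Nothing here is conceptually deep, but a scrupulous argument must keep track of which canonical maps are being used at each stage; that is where a fully formal write-up would spend most of its effort.
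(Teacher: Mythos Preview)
Your proposal is correct and follows essentially the same approach as the paper's proof: apply axiom~4 to obtain $\{\phi_i\colon U_i\times_U V\to U\times_U V\}\in L(U\times_U V)$, observe that $\tau=\pi_2\circ e\colon U\times_U V\to V$ is an isomorphism so $\{\tau\}\in L(V)$ by axiom~1, and then compose via axiom~2 to land in $L(V)$. The paper is more terse at the end, simply stating that the composite $\pi_2\circ e\circ\phi_i$ lies in $L(V)$ without explicitly identifying it with the standard pullback projection as you do; your extra bookkeeping is sound and addresses exactly the point the paper leaves implicit.
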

\begin{proof}
  The only axiom that we have to prove is the stability (under pullback) axiom. Consider $\{f_i: U_i \to U\} \in L(U)$ and $g: V \to U$ a morphism in $\mathcal{C}$. Using the same notion as in Definition \ref{df:prelopolgy}, we know that $\{\phi_i : U_i\tensor[_{f_i}]{\otimes}{_{g}}V \to U\tensor[_{id_U}]{\otimes}{_{g}}V\}_{i\in I} \in L(U\tensor[_{id_U}]{\otimes}{_{g}}V)$. Since $ \otimes  =\times $, the pseudo-pullback $U_i\tensor[_{f_i}]{\otimes}{_{g}}V$ is the pullback $U_i \times_U V$ and $U\tensor[_{id_U}]{\otimes}{_{g}}V$ is the pullback $U \times_U V$. Now, note that the following is a pullback diagram 
\[\begin{tikzcd}[ampersand replacement=\&]
	{U\times_U V} \& V \\
	U \& U
	\arrow["g", from=1-2, to=2-2]
	\arrow["{\pi_2 \circ e}", from=1-1, to=1-2]
	\arrow["{\pi_1 \circ e}"', from=1-1, to=2-1]
	\arrow["{id_U}"', from=2-1, to=2-2]
\end{tikzcd}\]
Then $U\times_U V \to V$ is an isomorphism, which implies that $\{\pi_2 \circ e: U\times_U V \to V\} \in L(V)$, by the first axiom of a Grothendieck prelopology. Thus, the composition $\{U_i \times_U V \xrightarrow{\phi_i} U \times_U V \xrightarrow{e} U\times V \xrightarrow{\pi_2} V\}_{i\in I} \in L(V)$, proving that $L$ satisfies the stability axiom. 
\end{proof}


\begin{example}\label{ex:prelopology_product_cat}
    \textbf{Prelopology for product category:} Consider two categories $(\mathcal{C},\otimes,1_{\mathcal{C}})$ and $(\mathcal{D},\star,1_{\mathcal{D}})$ both with pseudo-pullbacks and equipped, respectively, with Grothendieck prelopologies $L_{\mathcal{C}}$ and $L_{\mathcal{D}}$. Define a Grothendieck prelopology in $\mathcal{C}\times \mathcal{D}$ by $\{(\gamma_i,\delta_i):(C_i,D_i)\to (C,D)\} \in L_{\mathcal{C}\times \mathcal{D}}(C,D)$ iff $\{\gamma_i : C_i \to C \} \in L_{\mathcal{C}}(C) \mbox{ and } \{\delta_i : D_i \to D \} \in L_{\mathcal{D}}(D).$

    The verification is straightforward and we are going to show the calculations only for the fourth axiom. Since $\{\gamma_i : C_i \to C \} \in L_{\mathcal{C}}(C)$, for any $\epsilon : E \to C $ in $\mathcal{C}$ we have
\[\begin{tikzcd}[ampersand replacement=\&]
	{C_i\tensor[_{\gamma_i}]{\otimes}{_{\epsilon}}E} \&\& {C_i \otimes E} \&\& {C_i} \\
	{C\tensor[_{id_C}]{\otimes}{_{\epsilon}}E} \&\& {C \otimes E} \&\& C \\
	\&\&\& E
	\arrow["{\gamma_i}", from=1-5, to=2-5]
	\arrow["{\pi^i_1}", from=1-3, to=1-5]
	\arrow["{\pi_1}", from=2-3, to=2-5]
	\arrow["{\gamma_i \otimes id_E}"', from=1-3, to=2-3]
	\arrow["{e_i}", tail, from=1-1, to=1-3]
	\arrow["{\pi_2}"', from=2-3, to=3-4]
	\arrow["\epsilon"', from=3-4, to=2-5]
	\arrow["e"', tail, from=2-1, to=2-3]
	\arrow["{\phi_i}"', dashed, from=1-1, to=2-1]
\end{tikzcd}\]
with $\{C_i\tensor[_{\gamma_i}]{\otimes}{_{\epsilon}}E \to C\tensor[_{id_C}]{\otimes}{_{\epsilon}}E \}_{i\in I} \in L_{\mathcal{C}}(C\tensor[_{id_C}]{\otimes}{_{\epsilon}}E)$.
Analogously, for any $\zeta : F \to D $ in $\mathcal{D}$ we have 
\[\begin{tikzcd}[ampersand replacement=\&]
	{D_i\tensor[_{\gamma_i}]{\star}{_{\epsilon}}F} \&\& {D_i \star F} \&\& {D_i} \\
	{D\tensor[_{id_D}]{\star}{_{\zeta}}F} \&\& {D \star F} \&\& D \\
	\&\&\& F
	\arrow["{\pi^i_1}", from=1-3, to=1-5]
	\arrow["{\pi_1}", from=2-3, to=2-5]
	\arrow["{\delta_i \otimes id_F}"', from=1-3, to=2-3]
	\arrow["{e'_i}", tail, from=1-1, to=1-3]
	\arrow["{\pi_2}"', from=2-3, to=3-4]
	\arrow["\zeta"', from=3-4, to=2-5]
	\arrow["{e'}"', tail, from=2-1, to=2-3]
	\arrow["{\phi'_i}"', dashed, from=1-1, to=2-1]
	\arrow["{\delta_i}", from=1-5, to=2-5]
\end{tikzcd}\]
with $\{D_i\star[_{\gamma_i}]{\otimes}{_{\epsilon}}F \to D\tensor[_{id_D}]{\star}{_{\zeta}}F\}_{i\in I}\in L_{\mathcal{D}}(D\tensor[_{id_D}]{\otimes}{_{\zeta}}F)$
By Example \ref{exa:ppb_product_category}, for each $i \in I$, the pseudo-pullback of $(\gamma_i,\delta_i): (C_i,D_i) \to (C,D)$ and $(\epsilon,\zeta):(E,F)\to (C,D)$ is $(C_i\tensor[_{\gamma_i}]{\otimes}{_{\epsilon}}E,D_i\tensor[_{\delta_i}]{\star}{_{\zeta}}F)$. Thus, $\{(C_i\tensor[_{\gamma_i}]{\otimes}{_{\epsilon}}E,D_i\tensor[_{\delta_i}]{\star}{_{\zeta}}F)\to (C\tensor[_{id_C}]{\otimes}{_{\epsilon}}E, D\tensor[_{id_D}]{\star}{_{\zeta}}F) \} \in L_{\mathcal{C}\times \mathcal{D}}((C\tensor[_{id_C}]{\otimes}{_{\epsilon}}E, D\tensor[_{id_D}]{\star}{_{\zeta}}F)).$
\end{example}

\begin{remark}
    Observe that if $L_{\mathcal{C}}$ is a Grothendieck pretopology and $L_{\mathcal{D}}$ is a quantalic covering, this construction provides an example of prelopology that is not quantalic neither is a Grothendieck pretopology.
\end{remark}

Next, mimicking the definition of a sheaf for a Grothendieck pretopology we define

\begin{definition}\label{grothsheaf}
 Let $\mathcal{C} = (\mathcal{C}, \otimes,1)$ be a monoidal semicartesian category with equalizers. A presheaf $F: \mathcal{C}^{op} \rightarrow Set$ is \textbf{a sheaf for the Grothendieck prelopology} $L(U) = \{f_i : U_i \to U\}_{i \in I}$ if the following diagram is an equalizer in $Set$:  

             \begin{center}
              \begin{tikzcd}
            F (U) \arrow[r,"e"] & \prod\limits_{i \in I} F (U_i) \arrow[r,"p", shift left=1 ex] 
            \arrow[r, "q"', shift right=0.5 ex]  & \prod\limits_{(i,j)\in I\times I} F (U_i {}_{f_i}\!\!\otimes_{f_j} U_j)
            \end{tikzcd}
            \end{center} 
where 
 \begin{enumerate}
     \item $e(f) = \{F(f_i)(t) \enspace | \enspace i \in I\}, \enspace f \in F (U)$ 
     \item     $p((f_k)_{ k \in I}) = (F(p^1_{ij})(f_i))_{(i,j)\in I\times I}$ \\ $q((f_k)_{k \in I}) = (F(p^2_{ij})(f_j))_{(i,j)\in I\times I}, \enspace (f_k)_{k \in I} \in \prod\limits_{k\in I}F (U_k)$
 \end{enumerate}
 with $p^1_{i,j} = e' \circ \pi^1_{i,j}$ and $p^2_{i,j} = e' \circ \pi^2_{i,j}$
\end{definition}\label{df:sheaf_on_prelopology}

As in the classic definition, it is possible to define sheaves using the existence of a unique gluing for compatible families.

\begin{definition}
    Let $L$ be a Grothendieck prelopology, $\{f_i: U_i \to U\}_{i \in I}\in L(U)$, and $F$ a presheaf. We say that a family $\{x_{f_i} \in F(U_i)\}_{i \in I}$ is \textbf{compatible} for $\{f_i: U_i \to U\}$ if $F(p^1_{i,j})(x_{f_i}) = F(p^2_{i,j})(x_{f_j})$ for all $i, j \in I$, where $p^1_{i,j}$ and $p^2_{i,j}$ are the projections on the pseudo-pullback.
\end{definition}
\begin{definition}
    A presheaf $F$ is a \textbf{sheaf} if for every cover $\{f_i: U_i \to U\}_{i \in I}\in L(U)$, and every compatible family $\{x_{f_i} \in F(U_i)\}_{i \in I}$, there is a unique $x \in F(U)$ such that $F(f_i)(x) = x_{f_i}$ for all $i \in I$.
\end{definition}

It is clear that those two definitions of a sheaf are equivalent.

\begin{example}
\textbf{Sheaves on quantales:} In this case, $(\mathcal{C}, \otimes, 1) = (Q,\odot, 1)$ and the Grothendieck prelopology is given by $\{f_i: U_i \to U\} \in L(U) \iff U = \bigvee_{i\in I}U_i$ (see Example \ref{ex:prelopology_in_quantales}). Since in $Q$ the pseudo-pullback is given by the quantalic product, the definition of sheaves on quantales (\ref{df:sheaf-on-quantales}) fits perfectly  in the above definition.
\end{example}

\begin{example}\label{ex:sheaves_product_cat}
    \textbf{Sheaves on the product category:}  Consider two categories $(\mathcal{C},\otimes,1_{\mathcal{C}})$ and $(\mathcal{D},\star,1_{\mathcal{D}})$ both with pseudo-pullbacks and equipped, with Grothendieck prelopologies $L_{\mathcal{C}}$ and $L_{\mathcal{D}}$, respectively. Let $F:\mathcal{C}^{op} \to Set$ be a sheaf for $L_{\mathcal{C}}$ and $G:\mathcal{D}^{op} \to Set$ be a sheaf for $L_{\mathcal{D}}$.  In Example \ref{ex:prelopology_product_cat} we described a Grothendieck prelopology $L_{\mathcal{C}\times \mathcal{D}}$ for the product category $\mathcal{C}\otimes \mathcal{D}$. Then $F\times G: (\mathcal{C}\times \mathcal{D})^{op} \to Set$ defined by $(F\times G)((C,D)) = F(C)\times G(D)$ is a sheaf.
\end{example}

Given the previously discussion about pseudo-pullbacks and Grothendieck pretopologies the next result may be seen as a corollary, but it incarnates our goal of obtaining a monoidal generalization of sheaves that provides usual sheaves (for a Grothendieck pretopology) whenever the monoidal tensor is the categorical product.

\begin{theorem}
Let $\mathcal{C} = (\mathcal{C},\times, 1)$ be a cartesian category with equalizers. If $F: \mathcal{C}^{op} \to Set$ is a sheaf for a Grothendieck pretopology, then $F$ is a sheaf for a Grothendieck prelopology. Conversely, a sheaf $F: \mathcal{C}^{op} \to Set$ for a  given Grothendieck prelopology is a sheaf for a  Grothendieck pretopology.
\end{theorem}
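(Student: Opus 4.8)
The plan is to leverage the two propositions already proved about the interaction between Grothendieck pretopologies and prelopologies on a cartesian category. Specifically, Proposition \ref{pro:everypretopology_is_a_prelopology} tells us that a Grothendieck pretopology $K$ on $\mathcal{C} = (\mathcal{C},\times,1)$ is a Grothendieck prelopology, and Proposition \ref{prop:prelopology_in_cartesiancategory_is_pretology} tells us the converse, that a Grothendieck prelopology $L$ on such $\mathcal{C}$ is a Grothendieck pretopology. So, at the level of covering families, the two notions simply coincide when $\otimes = \times$. The only remaining thing to check is that the equalizer diagram defining a sheaf for the prelopology (Definition \ref{df:sheaf_on_prelopology}) is literally the same diagram as the one defining a sheaf for the pretopology, once we identify the pseudo-pullback with the pullback.

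First I would observe, invoking Example \ref{exa:pb_is_pseudopullback}, that when the monoidal tensor is the categorical product the pseudo-pullback $U_i \tensor[_{f_i}]{\otimes}{_{f_j}} U_j$ is exactly the pullback $U_i \times_U U_j$, and the projections $p^1_{i,j}, p^2_{i,j}$ constructed as $e' \circ \pi^1_{i,j}$ and $e' \circ \pi^2_{i,j}$ coincide with the usual pullback projections $U_i \times_U U_j \to U_i$ and $U_i \times_U U_j \to U_j$. This is essentially the content of Example \ref{exa:pb_is_pseudopullback} together with an unwinding of the definition of $\pi_1,\pi_2$ via the terminal object $1$; in the cartesian case $\pi_1 = \rho_{U_i}\circ(id\times\, !)$ is just the first product projection, and similarly for $\pi_2$, and the equalizer of $f_i\circ\pi_1$ and $f_j\circ\pi_2$ on $U_i\times U_j$ is by definition the pullback. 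Once this identification is in place, the maps $e$, $p$, $q$ of Definition \ref{df:sheaf_on_prelopology} become verbatim the maps $e$, $p$, $q$ of the classical sheaf condition for the pretopology, because $F(p^1_{i,j})$ is restriction along the pullback projection.

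For the first direction: let $F$ be a sheaf for the Grothendieck pretopology $K$. By Proposition \ref{pro:everypretopology_is_a_prelopology}, $K$ is a Grothendieck prelopology; and for every covering family $\{f_i: U_i \to U\}_{i\in I}$ in $K(U)$, the prelopology equalizer diagram of Definition \ref{df:sheaf_on_prelopology} is, by the identification above, the classical equalizer diagram, which is an equalizer by hypothesis. Hence $F$ is a sheaf for the prelopology. Conversely, if $F$ is a sheaf for a prelopology $L$, then by Proposition \ref{prop:prelopology_in_cartesiancategory_is_pretology} $L$ is a pretopology, and running the same diagram identification backwards shows the classical sheaf equalizer holds for every $K$-covering family; so $F$ is a sheaf for the pretopology.

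The main obstacle, such as it is, is purely bookkeeping: one must be careful that the structural isomorphisms $\rho, \lambda$ of the monoidal structure do not introduce a genuine discrepancy, i.e. that ``the pseudo-pullback is the pullback'' holds on the nose (or at least up to a canonical iso that $F$ sends to a bijection compatible with all the relevant maps), and that the indexing conventions for $p^1_{i,j}, p^2_{i,j}$ in Definition \ref{df:sheaf_on_prelopology} match the conventions for $p, q$ in the classical definition (Definition \ref{df:sheaf_locale}). Since in a cartesian monoidal category one can always take the coherence isomorphisms to be identities on the relevant objects, or absorb them into the canonical comparison iso between the pseudo-pullback and the pullback, this reduces to checking that $F$, being a functor, transports that canonical iso to a bijection making the two equalizer cones correspond. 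I expect no real difficulty here, which is why the authors flag the theorem as essentially a corollary of the preceding discussion.
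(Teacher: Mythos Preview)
Your argument is correct and follows exactly the paper's approach: invoke Propositions~\ref{pro:everypretopology_is_a_prelopology} and~\ref{prop:prelopology_in_cartesiancategory_is_pretology} to identify the covering families, and Example~\ref{exa:pb_is_pseudopullback} to identify pseudo-pullbacks with pullbacks, so that the two equalizer diagrams coincide. The paper's proof is terser and does not spell out the coherence bookkeeping you mention, but the strategy is the same.
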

\begin{proof}
Assume $\mathcal{C} = (\mathcal{C},\times, 1)$ and consider a sheaf $F$ for a Grothendieck pretopology. By Proposition \ref{pro:everypretopology_is_a_prelopology}, every Grothendieck pretopology is a Grothendieck prelopology. Besides, in such conditions, the pullback in $\mathcal{C}$ is the pseudo-pullback in $\mathcal{C}$. Therefore, $F$ is a sheaf for a Grothendieck prelopology.
Conversely,  we use that the  pseudo-pullback in a cartesian category with equalizers is precisely the pullback, and that Grothendieck  prelopologies in cartesian categories with equalizers are Grothendieck 
 pretopologies, by Proposition \ref{prop:prelopology_in_cartesiancategory_is_pretology}. Thus, if $F: \mathcal{C}^{op} \to Set$ is a sheaf for a   Grothendieck prelopology then it is a sheaf for a  Grothendieck pretopology.
\end{proof}

This notion of a sheaf includes both our sheaves on quantales and the standard notion of sheaves for Grothendieck pretopologies. Besides, Example \ref{ex:sheaves_product_cat} says that if we have a sheaf $F$ for a Grothendieck pretopology and a sheaf $G$ on quantale, then we can obtain a sheaf $F \times G$ for the Grothendieck prelopology of the product category.

Now we show that under mild conditions we obtain that $F(U \otimes - )$ is a sheaf if $F$ is sheaf. This is crucial to show that the category of sheaves for a Grothendieck prelopology is monoidal closed, as we will see in the next section.

Let 
\begin{tikzcd}[ampersand replacement=\&]
	A \& B
	\arrow["g"', shift right, from=1-1, to=1-2]
	\arrow["f", shift left, from=1-1, to=1-2]
\end{tikzcd} be a pair of parallel arrows and $e : Eq(f,g) \to A$ be the equalizer arrow of such pair. We also have the equalizer $Eq(id_U \otimes f, id_U \otimes g) \to U \otimes A$, for any object $U$. Then there is a unique arrow $$\gamma : U \otimes Eq(f,g) \to Eq(id_U \otimes f, id_U \otimes g).$$

\begin{definition}
    We say that the functor $U \otimes - $ preserves equalizer when $\gamma$ is an isomorphism.
\end{definition}
\begin{remark}
    Analogously, we can say when $- \otimes U$  preserves equalizers. We say that the category is regular when $U \otimes - \otimes V$ preserves equalizers, for any $U, V$ objects in the category.
\end{remark}
In \cite[Examples 2.1.1]{aguiar1997internal}, there is a list of properties and examples of regular categories. In particular, the opposite of the category of associative algebras over a given base field with its usual tensor product is an example of semicartesian category that is regular (see \cite[Examples 2.1.1.5]{aguiar1997internal}).

\begin{lemma}\label{lem:commutewhereweneed}
Suppose that $(C,\otimes,1)$ is a semicartesian symmetric monoidal category and that $U\otimes -$ preserves equalizers.     There is a unique morphism $u$ such that the following diagram commutes
\[\begin{tikzcd}[ampersand replacement=\&]
	{U \otimes (V_i \tensor[_{f_i}]{\otimes}{_{f_j}} V_j) } \& {U \otimes (V_i \otimes V_j)} \\
	{(U\otimes V_i)\tensor[_{id_U \otimes f_i}]{\otimes}{_{id_U \otimes f_j}}  (U\otimes V_j) }
	\arrow["{id_U \otimes e}", from=1-1, to=1-2]
	\arrow["u", dashed, from=2-1, to=1-1]
	\arrow["{e' \circ (id_{U\otimes V_i}\otimes\pi^2_{U,V_j})\circ a_{U,V_i,V_j}}"'{pos=0.7}, from=2-1, to=1-2]
\end{tikzcd}\]
where $e'$ is the equalizer of form 
\begin{tikzcd}[ampersand replacement=\&,sep=scriptsize]
	{(U\otimes V_i)\tensor[_{id_U \otimes f_i}]{\otimes}{_{id_U \otimes f_j}}  (U\otimes V_j) } \& {(U\otimes V_i)\otimes (U\otimes V_j) }
	\arrow["{e'}", from=1-1, to=1-2]
\end{tikzcd} and $e$ is the equalizer of form 
\begin{tikzcd}[ampersand replacement=\&]
	{ V_i\tensor[_{f_i}]{\otimes}{_{f_j}}   V_j } \& { V_i\otimes V_j}
	\arrow["e", from=1-1, to=1-2]
\end{tikzcd}.
\end{lemma}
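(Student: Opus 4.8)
The plan is to produce $u$ as the factorization given by the universal property of an equalizer — the hypothesis on $U\otimes-$ being exactly what supplies that equalizer — and to get uniqueness for free because equalizer arrows are monic.

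Write $W$ for the common codomain of $f_i$ and $f_j$. By definition $V_i\tensor[_{f_i}]{\otimes}{_{f_j}}V_j$ is the equalizer of $f_i\circ\pi^1_{V_i,V_j}$ and $f_j\circ\pi^2_{V_i,V_j}$, a parallel pair $V_i\otimes V_j\to W$, with equalizer arrow $e$. Since $U\otimes-$ preserves equalizers, $id_U\otimes e$ is itself an equalizer of the parallel pair $id_U\otimes(f_i\circ\pi^1_{V_i,V_j})$, $id_U\otimes(f_j\circ\pi^2_{V_i,V_j}):U\otimes(V_i\otimes V_j)\to U\otimes W$. In particular $id_U\otimes e$ is a monomorphism, so at most one $u$ makes the triangle commute, which settles uniqueness.

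For existence, let $s$ denote the lower-right leg $a_{U,V_i,V_j}\circ(id_{U\otimes V_i}\otimes\pi^2_{U,V_j})\circ e'$ of the diagram. By the universal property just recorded it suffices to check that $s$ equalizes $id_U\otimes(f_i\circ\pi^1_{V_i,V_j})$ and $id_U\otimes(f_j\circ\pi^2_{V_i,V_j})$; the resulting factorization is then $u$. Using only the triangle identity, naturality of the associator, functoriality of $\otimes$, and the universal property of $1$ (in the form $!_{V_j}\circ\pi^2_{U,V_j}=!_{U\otimes V_j}$), one first checks the coherence identities
\[
(id_U\otimes\pi^1_{V_i,V_j})\circ a_{U,V_i,V_j}\circ(id_{U\otimes V_i}\otimes\pi^2_{U,V_j})=\pi^1_{U\otimes V_i,\,U\otimes V_j}
\]
and
\[
(id_U\otimes\pi^2_{V_i,V_j})\circ a_{U,V_i,V_j}\circ(id_{U\otimes V_i}\otimes\pi^2_{U,V_j})=\pi^1_{U,V_i}\otimes\pi^2_{U,V_j}.
\]
Composing these with $id_U\otimes f_i$, resp.\ $id_U\otimes f_j$, and then with $e'$, the required equalization becomes the single identity
\[
(id_U\otimes f_i)\circ\pi^1_{U\otimes V_i,\,U\otimes V_j}\circ e'=(id_U\otimes f_j)\circ(\pi^1_{U,V_i}\otimes\pi^2_{U,V_j})\circ e'.
\]
Now $e'$ is, by construction of the pseudo-pullback, the equalizer of $(id_U\otimes f_i)\circ\pi^1_{U\otimes V_i,\,U\otimes V_j}$ and $(id_U\otimes f_j)\circ\pi^2_{U\otimes V_i,\,U\otimes V_j}$, so the left-hand side above equals $(id_U\otimes f_j)\circ\pi^2_{U\otimes V_i,\,U\otimes V_j}\circ e'$, and everything reduces to
\[
(id_U\otimes f_j)\circ\pi^2_{U\otimes V_i,\,U\otimes V_j}\circ e'=(id_U\otimes f_j)\circ(\pi^1_{U,V_i}\otimes\pi^2_{U,V_j})\circ e'.
\]

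I expect this last identity to be the main obstacle. On the whole of $(U\otimes V_i)\otimes(U\otimes V_j)$ the maps $\pi^2_{U\otimes V_i,\,U\otimes V_j}$ and $\pi^1_{U,V_i}\otimes\pi^2_{U,V_j}$ are genuinely different — they retain different copies of $U$ — and they coincide only after restriction along $e'$. Proving this is not pure coherence: it relies on the bookkeeping of semicartesian projections carried out in the appendix (Proposition \ref{prop:equalizes} and the small lemmas around it). Concretely, composing the defining equalizer identity for $e'$ with $\pi^1_{U,W}$ shows that the two copies of $U$ occurring in $(U\otimes V_i)\otimes(U\otimes V_j)$ are identified along $e'$, while composing it with $\pi^2_{U,W}$ matches the $W$-components; the appendix results are what let one pass from these component-wise agreements to equality of the two composites. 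Everything else — the coherence simplifications, the reductions above, and the uniqueness argument — is routine.
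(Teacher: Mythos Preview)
Your proposal is correct and follows essentially the same route as the paper: use that $id_U\otimes e$ is an equalizer (by the hypothesis on $U\otimes-$) and then invoke the universal property, the substantive input being that $s$ equalizes the relevant pair, which is exactly Proposition~\ref{prop:equalizes}. The only difference is presentational: the paper cites Proposition~\ref{prop:equalizes} in one line, whereas you partially unpack its proof (your two coherence identities are the first steps of that proposition's argument) before deferring the remaining identity to the appendix.
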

\begin{proof}

Since $U\otimes -$ preserves equalizers, we have that 
\[\begin{tikzcd}[ampersand replacement=\&]
	\&\& {U \otimes V_j} \\
	{U \otimes (V_i \tensor[_{f_i}]{\otimes}{_{f_j}} V_j)} \& {U \otimes (V_i \otimes V_j)} \&\& {U \otimes V} \\
	\&\& {U \otimes V_i}
	\arrow["{id_U \otimes f_j}", from=1-3, to=2-4]
	\arrow["{id_U \otimes f_i}"', from=3-3, to=2-4]
	\arrow["{id_U \otimes \pi^2_{V_i,V_j}}", from=2-2, to=1-3]
	\arrow["{id_U \otimes \pi^1_{V_i,V_j}}"'{pos=0.3}, from=2-2, to=3-3]
	\arrow["{id_U \otimes e}", from=2-1, to=2-2]
\end{tikzcd}\]
is an equalizer diagram.

By Proposition \ref{prop:equalizes}, the morphism $(U\otimes V_i) \tensor[_{id_U \otimes f_i}]{\otimes}{_{id_U \otimes f_j}} (U\otimes V_j) \to U \otimes (V_i \otimes V_j)$ given by the composition $e' \circ id_{U\otimes V_i}\otimes\pi^2_{U,V_j}\circ a_{U,V_i,V_j}$ equalizes  $(id_U \otimes f_j)\circ (id_U\otimes \pi^2_{V_i,V_j})$ and $(id_U \otimes f_i)\circ (id_U\otimes \pi^1_{V_i,V_j})$. 
So we obtain a unique $u: (U\otimes V_i)\tensor[_{id_U \otimes f_i}]{\otimes}{_{id_U \otimes f_j}}  (U\otimes V_j) \to U\otimes (V_i \otimes V_j)$ such that $e' \circ (id_{U\otimes V_i}\otimes\pi^2_{U,V_j})\circ a_{U,V_i,V_j} = (id_U \otimes e)\circ u$ by the universal property of the equalizer.
    
\end{proof}

\begin{proposition}\label{prop:Fotimesisasheaf}
    Suppose that $(C,\otimes,1)$ is a semicartesian symmetric monoidal category and that $U\otimes -$ preserves equalizers. If $F$ is a sheaf for a Grothendieck prelopology $L$ then $F(U \otimes -)$ is a sheaf for $L$.
\end{proposition}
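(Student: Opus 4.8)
The plan is to compare the sheaf condition for $G:=F(U\otimes-)\colon\mathcal{C}^{op}\to Set$ on an arbitrary cover with the sheaf condition for $F$ itself on a cover of $U\otimes U'$ obtained by tensoring with $U$, which is already available since $F$ is a sheaf. Fix a cover $\{f_i:U_i\to U'\}_{i\in I}\in L(U')$. By the third axiom in the definition of a Grothendieck prelopology, $\{\,id_U\otimes f_i:U\otimes U_i\to U\otimes U'\,\}_{i\in I}\in L(U\otimes U')$, so the diagram
\[
F(U\otimes U')\ \xrightarrow{\ e\ }\ \prod_{i\in I}F(U\otimes U_i)\ \rightrightarrows\ \prod_{(i,j)\in I\times I}F\!\left((U\otimes U_i)\tensor[_{id_U\otimes f_i}]{\otimes}{_{id_U\otimes f_j}}(U\otimes U_j)\right)
\]
is an equalizer in $Set$, with parallel pair denoted $p_F,q_F$ as in Definition~\ref{grothsheaf}. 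The sheaf diagram for $G$ on $\{f_i\}_{i\in I}$ (again Definition~\ref{grothsheaf}) has exactly the same first two objects $F(U\otimes U')$, $\prod_i F(U\otimes U_i)$ and exactly the same arrow $e=\big(F(id_U\otimes f_i)(-)\big)_i$ — because $G(U')=F(U\otimes U')$, $G(U_i)=F(U\otimes U_i)$ and $G(f_i)=F(id_U\otimes f_i)$ — but its third object is $\prod_{(i,j)}F\!\big(U\otimes(U_i\tensor[_{f_i}]{\otimes}{_{f_j}}U_j)\big)$, with parallel pair $p_G,q_G$. Hence it suffices to prove that $e$ is also the equalizer of $p_G$ and $q_G$.

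The bridge between the two third objects is Lemma~\ref{lem:commutewhereweneed} — the only place the hypothesis that $U\otimes-$ preserves equalizers enters — which for each $(i,j)$ provides a canonical morphism
\[
u_{ij}\colon (U\otimes U_i)\tensor[_{id_U\otimes f_i}]{\otimes}{_{id_U\otimes f_j}}(U\otimes U_j)\ \longrightarrow\ U\otimes\big(U_i\tensor[_{f_i}]{\otimes}{_{f_j}}U_j\big).
\]
Using its defining property (the commuting triangle of Lemma~\ref{lem:commutewhereweneed}), the coherence of the symmetric monoidal structure, and the identities comparing the canonical semicartesian projections that are established in the appendix (in particular Proposition~\ref{prop:equalizes}), I would verify that $u_{ij}$ is a morphism of pseudo-pullback cones, that is,
\[
(id_U\otimes p^1_{ij})\circ u_{ij}=q^1_{ij},\qquad (id_U\otimes p^2_{ij})\circ u_{ij}=q^2_{ij},
\]
where $p^1_{ij},p^2_{ij}$ are the projections of $U_i\tensor[_{f_i}]{\otimes}{_{f_j}}U_j$ onto $U_i$ and $U_j$, and $q^1_{ij},q^2_{ij}$ those of $(U\otimes U_i)\tensor[_{id_U\otimes f_i}]{\otimes}{_{id_U\otimes f_j}}(U\otimes U_j)$ onto $U\otimes U_i$ and $U\otimes U_j$. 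Applying the contravariant functor $F$ and using $G(p^k_{ij})=F(id_U\otimes p^k_{ij})$, this gives $F(q^k_{ij})=F(u_{ij})\circ G(p^k_{ij})$ for $k=1,2$; hence the family $\mu:=\big(F(u_{ij})\big)_{(i,j)}$ satisfies $p_F=\mu\circ p_G$ and $q_F=\mu\circ q_G$.

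With this the conclusion is a short formal argument. First, $e$ equalizes $p_G$ and $q_G$: by the definition of a pseudo-pullback as an equalizer one has $f_i\circ p^1_{ij}=f_j\circ p^2_{ij}$, whence $G(p^1_{ij})\circ G(f_i)=G(p^2_{ij})\circ G(f_j)$, i.e. $p_G\circ e=q_G\circ e$. Second, any morphism $h$ with $p_G\circ h=q_G\circ h$ also satisfies $p_F\circ h=\mu\circ p_G\circ h=\mu\circ q_G\circ h=q_F\circ h$, so, since the $F$-diagram is an equalizer, $h$ factors uniquely through $e$. Thus $e$ equalizes $p_G,q_G$ and is universal among such morphisms, i.e. $e$ is the equalizer of $p_G$ and $q_G$; by Definition~\ref{grothsheaf}, $G=F(U\otimes-)$ is a sheaf for $L$.

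The only genuinely non-formal point is the projection-compatibility of $u_{ij}$. It amounts to a diagram chase with the associator, the unitors, the symmetry and the canonical projections $\rho\circ(id\otimes\,!\,)$, $\lambda\circ(\,!\,\otimes id)$, but it rests squarely on the coherence results for semicartesian symmetric monoidal categories collected in the appendix, which is precisely the role of Proposition~\ref{prop:equalizes}. I would stress that $u_{ij}$ is \emph{not} an isomorphism in general: for a non-idempotent quantale, $(U\odot U_i)\odot(U\odot U_j)=U\odot U\odot U_i\odot U_j$ lies strictly below $U\odot(U_i\odot U_j)=U\odot U_i\odot U_j$. So one cannot merely transport the equalizer along an iso; the argument uses in an essential way that the $F$-diagram, built from the pseudo-pullbacks of the tensored maps $id_U\otimes f_i$, already exhibits the equalizer of $p_G$ and $q_G$.
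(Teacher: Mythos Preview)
Your proposal is correct and follows essentially the same route as the paper. You phrase the argument in the equalizer-diagram language (producing $\mu=\prod F(u_{ij})$ with $p_F=\mu\circ p_G$, $q_F=\mu\circ q_G$), which is exactly the ``diagrammatic form of reading the proof'' the paper gives immediately after its element-based argument with compatible families; the key identities $(id_U\otimes p^k_{ij})\circ u_{ij}=q^k_{ij}$ that you flag as the only non-formal step are precisely what the paper extracts from the triangle in Lemma~\ref{lem:commutewhereweneed} together with the projection lemmas in the appendix (Lemma~6.5 and the final computation in the proof of Proposition~\ref{prop:equalizes}, where the symmetry is used for the $k=2$ case).
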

\begin{proof}
    Take $\{x_{f_i} \in F(U \otimes V_i)\}$ a compatible family for $\{V_i \to V\}_{i\in I} \in L(V)$. So $F(p^1_{i,j})(x_{f_i}) = F(p^2_{i,j})(x_{f_j})$ in $F(U\otimes (V_i \tensor[_{f_i}]{\otimes}{_{f_j}} V_j))$, where $p^k_{i,j} =  (id_U \otimes\pi^k_{V_i,V_j})\circ (id_U \otimes e), $ for $k = 1,2$.  Because $F$ is a functor, using the unique arrow $u$ introduced in Lemma \ref{lem:commutewhereweneed}, we have $$F(p^1_{i,j}\circ  u)(x_{f_i}) = F(p^2_{i,j}\circ u)(x_{f_j}) \in F((U\otimes V_i)\tensor[_{id_U \otimes f_i}]{\otimes}{_{id_U \otimes f_j}}  (U\otimes V_j)) $$ 

    By the commutativity of the triangle in Lemma \ref{lem:commutewhereweneed}, we obtain that $F(p^1_{i,j}\circ  u)(x_{f_i}) = F(q)(x_{f_i})$ and $F(p^2_{i,j}\circ  u)(x_{f_j}) = F(q')(x_{f_j})$, where we call $q = id_U \otimes \pi^1_{V_i,V_j}\circ e' \circ (id_{U\otimes V_i}\otimes\pi^2_{U,V_j})\circ a_{U,V_i,V_j}$ and $q' = id_U \otimes \pi^2_{V_i,V_j}\circ e' \circ (id_{U\otimes V_i}\otimes\pi^2_{U,V_j})\circ a_{U,V_i,V_j}$.
    Thus we have $$F(q)(x_{f_i}) = F(q')(x_{f_j})\in F((U\otimes V_i)\tensor[_{id_U \otimes f_i}]{\otimes}{_{id_U \otimes f_j}}  (U\otimes V_j))$$

    In other words, $\{x_{f_i} \in F(U\otimes U_i)\}_{i\in I}$ is a compatible family for  $\{U \otimes V_i \to U \otimes V\}_{i\in I} \in L(U \otimes V)$.  Since $F$ is a sheaf, there is a unique $x \in F(U \otimes V)$ such that $F(id_U \otimes f_i)(x) = x_{f_i}$, for all $i \in I$. Therefore $F(U\otimes - )$ is a sheaf.
\end{proof}

The diagrammatic form of reading the above proof is the following: given $\{V_i \to V\}_{i\in I} \in L(V)$, $L$ is a Grothendieck prelopology we have  $\{U \otimes V_i \to U \otimes V\}_{i\in I} \in L(U\otimes V)$, for any object $U$ in $C$. Since $F$ is a sheaf, it is clear that 
\[\begin{tikzcd}[ampersand replacement=\&]
	{F(U\otimes V)} \& {\prod\limits_{i} F(U \otimes V_i)} \& {\prod\limits_{i,j} F(U \otimes ((U\otimes V_i)\tensor[_{id_U \otimes f_i}]{\otimes}{_{id_U \otimes f_j}}  (U\otimes V_j))}
	\arrow[from=1-1, to=1-2]
	\arrow[shift right, from=1-2, to=1-3]
	\arrow[shift left, from=1-2, to=1-3]
\end{tikzcd}\]
is an equalizer diagram.
Lemma \ref{lem:commutewhereweneed}, provides that the top diagram in 
\[\begin{tikzcd}[ampersand replacement=\&]
	{F(U\otimes V)} \& {\prod\limits_{i} F(U \otimes V_i)} \& {\prod\limits_{i,j} F(U \otimes (V_i \tensor[_{f_i}]{\otimes}{_{f_j}} V_j))} \\
	{F(U\otimes V)} \& {\prod\limits_{i} F(U \otimes V_i)} \& {\prod\limits_{i,j} F((U\otimes V_i)\tensor[_{id_U \otimes f_i}]{\otimes}{_{id_U \otimes f_j}}  (U\otimes V_j))}
	\arrow[from=1-1, to=1-2]
	\arrow[shift right, from=1-2, to=1-3]
	\arrow[shift left, from=1-2, to=1-3]
	\arrow["id"', from=1-1, to=2-1]
	\arrow["id"', from=1-2, to=2-2]
	\arrow[from=2-1, to=2-2]
	\arrow[shift right, from=2-2, to=2-3]
	\arrow[shift left, from=2-2, to=2-3]
	\arrow["{\prod F(u)}", from=1-3, to=2-3]
\end{tikzcd}\]
also is an equalizer. Therefore, $F(U \otimes -)$ is a sheaf. 

A close reading of the proof shows that we only used the third axiom in the definition of a Grothendieck prelopology. Apparently the fourth axiom have not played a role for this result (for now, that axiom was relevant only to show that Grothendieck prelopologies are precisely Grothendieck pretopologies if the category is cartesian). Moreover, if $F$ is a sheaf for a semicartesian quantale $Q$,  then $F(u \otimes -)$ is a sheaf for $Q$ without requiring $Q$ to be commutative. Thus, there are cases in which the category does not need to be symmetric\footnote{The symmetric is necessary to prove Proposition \ref{prop:equalizes}, used in the proof of Lemma \ref{lem:commutewhereweneed}.}. In the following we remove the hypothesis that $U \otimes -$ preserves equalizers by paying the price of adding another axiom in the notion of a Grothendieck pretopology. 

\begin{definition}
    Let $(C,\otimes,1)$ be a semicartesian monoidal category with pseudo-pullbacks. A \textbf{strong Grothendieck prelopology} on $C$ associates to each object $U$ of $C$ a set $L(U)$ of families of morphisms $\{U_i \rightarrow U \}_{i \in I}$ such that:
        \begin{enumerate}
        \item The singleton family $\{U' \xrightarrow{f} U \}$, formed by an isomorphism $f : U' \overset{\cong}\to U$,  is in $L(U)$;
        
        \item If $\{U_i \xrightarrow{f_i} U\}_{i \in I}$ is in $L(U)$ and $\{V_{ij} \xrightarrow{g_{ij}} U_i \}_{j \in J_i}$ is in $L(U_i)$ for all $i \in I$, then $\{V_{ij} \xrightarrow{f_i \circ g_{ij}} U \}_{i \in I, j \in J_i}$ is in $L(U)$; 
        \item If $\{ f_i : U_i \to U\}_{i \in I} \in L(U)$, then $\{ f_i \otimes id_V : U_i \otimes V \to U \otimes V\}_{i \in I}$ is in $L(U\otimes V)$ and $\{ id_V \otimes f_i : V \otimes U_i \to V \otimes U\}_{i \in I}$ is in $L(V\otimes U)$, for any $V$ object in $C$;
        \item If $\{U_i \xrightarrow{f_i} U\}_{i \in I}$ is in $L(U)$ and $g: V \to U$ is any morphism in $C$, then $\{\phi_i : U_i\tensor[_{f_i}]{\otimes}{_{g}}V \to U\tensor[_{id_U}]{\otimes}{_{g}}V\}_{i\in I}$ is in $ L(U\tensor[_{id_U}]{\otimes}{_{g}}V)$ and $\{\phi_i : V\tensor[_{g}]{\otimes}{_{f_i}}U \to V\tensor[_{g}]{\otimes}{_{id_U}}U\}_{i\in I}$ is in $ L(V\tensor[_{g}]{\otimes}{_{id_U}}U)$.
        \item If $\{U_i \xrightarrow{f_i} U\}_{i \in I}$ is in $L(U)$ and $V$ is any object in $C$, there is a morphism $l: (V \otimes U_i) \tensor[_{id_V \otimes f_i}]{\otimes}{_{id_V \otimes f_j}} V \otimes U_j \to V \otimes (U_i \tensor[_{f_i}]{\otimes}{_{f_j}} U_j)$ such that the following diagrams commute
\[\begin{tikzcd}[ampersand replacement=\&]
	{V \otimes (U_i \tensor[_{f_i}]{\otimes}{_{f_j}} U_j) } \&\& {V \otimes U_i} \\
	{(V\otimes U_i)\tensor[_{id_V \otimes f_i}]{\otimes}{_{id_V \otimes f_j}}  (V\otimes U_j) } \\
	{V \otimes (U_i \tensor[_{f_i}]{\otimes}{_{f_j}} U_j) } \&\& {V \otimes U_j} \\
	{(V\otimes U_i)\tensor[_{id_V \otimes f_i}]{\otimes}{_{id_V \otimes f_j}}  (V\otimes U_j) }
	\arrow["l", from=2-1, to=1-1]
	\arrow["{p^1_{V\otimes U_i,V\otimes U_j}}"'{pos=0.7}, from=2-1, to=1-3]
	\arrow["{id_V \otimes p^1_{U_i,U_j}}", from=1-1, to=1-3]
	\arrow["l", from=4-1, to=3-1]
	\arrow["{id_V \otimes p^2_{U_i,U_j}}", from=3-1, to=3-3]
	\arrow["{p^2_{V\otimes U_i,V\otimes U_j}}"', from=4-1, to=3-3]
\end{tikzcd}\]
and there is a morphism $r: (U_i \otimes V) \tensor[_{f_i \otimes id_V}]{\otimes}{_{f_j \otimes id_V}} U_j \otimes V \to  (U_i \tensor[_{f_i}]{\otimes}{_{f_j}} U_j)\otimes V $ such that the analogous diagrams commute.
        \end{enumerate}
\end{definition}\label{df:strongprelopolgy}

Observe that if $U \otimes -$ preserves equalizers then the unique morphism $u$ in Lemma \ref{lem:commutewhereweneed} is the morphism $l$ mentioned in the additional axiom. Observer that to prove Proposition \ref{prop:Fotimesisasheaf} we do not need the arrow $u$ to be unique neither we need $U \otimes -$ to preserve all equalizers.  Actually, we just need a good way to factorize $p^1_{V\otimes U_i,V\otimes U_j}$ and $p^2_{V\otimes U_i,V\otimes U_j}$. The fifth axiom is there precisely to provides such good factorization. In the case $C$ is a semicartesian quantale $Q$, the factorization holds because all morphism are unique and $v\odot u_i \odot v\odot u_j \leq v \odot u_i \odot  u_j \leq v \odot u_j, \forall v, u_i, u_j \in Q $. In the case $\mathcal{C}$ is cartesian category,  the universal property of the pullback provide a unique arrow allowing the factorization. Then we are still generalizing Grothendieck pretopologies in a way that encompass the quantalic case. Additionally, if the tensor is given by a weak product (in the sense that the universal arrow is not unique) then the pseudo-pullback is the weak pullback. Since the arrows in the factorization do not need to be unique, the fifth axiom is also satisfied in this context. 

It should be clear that we have the following proposition:
\begin{proposition}\label{prop:Fotimesisasheafstrong}
        Suppose that $(C,\otimes,1)$ is a semicartesian monoidal category. If $F$ is a sheaf for a strong Grothendieck prelopology $L$ then $F(U \otimes -)$ is a sheaf for $L$.
\end{proposition}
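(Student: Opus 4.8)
The plan is to run the proof of Proposition~\ref{prop:Fotimesisasheaf} again, but to replace the arrow $u$ produced by Lemma~\ref{lem:commutewhereweneed} (whose construction needed $U\otimes-$ to preserve equalizers, and behind it the symmetry-dependent Proposition~\ref{prop:equalizes}) by the morphism $l$ that the fifth axiom of a strong Grothendieck prelopology supplies outright. Concretely, fix a cover $\{f_i\colon V_i\to V\}_{i\in I}\in L(V)$; by the third axiom $\{id_U\otimes f_i\colon U\otimes V_i\to U\otimes V\}_{i\in I}\in L(U\otimes V)$, so this is the cover on which the sheaf condition for $F(U\otimes-)$ must be checked. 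Let $\{x_{f_i}\in F(U\otimes V_i)\}_{i\in I}$ be a compatible family for $F(U\otimes-)$, which unwinds to the equalities $F(id_U\otimes p^1_{V_i,V_j})(x_{f_i})=F(id_U\otimes p^2_{V_i,V_j})(x_{f_j})$ in $F(U\otimes(V_i\tensor[_{f_i}]{\otimes}{_{f_j}}V_j))$ for all $i,j\in I$.

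Next I would feed the two commuting triangles of the fifth axiom to the contravariant functor $F$: from $(id_U\otimes p^k_{V_i,V_j})\circ l=p^k_{U\otimes V_i,U\otimes V_j}$ one obtains $F(p^k_{U\otimes V_i,U\otimes V_j})=F(l)\circ F(id_U\otimes p^k_{V_i,V_j})$ for $k=1,2$. Applying these identities to $x_{f_i}$ and $x_{f_j}$ and then using the compatibility equalities above (so that $F(l)$ sends two equal elements to equal elements) yields $F(p^1_{U\otimes V_i,U\otimes V_j})(x_{f_i})=F(p^2_{U\otimes V_i,U\otimes V_j})(x_{f_j})$. This says exactly that $\{x_{f_i}\}_{i\in I}$ is a compatible family for the cover $\{id_U\otimes f_i\colon U\otimes V_i\to U\otimes V\}_{i\in I}$ with respect to $F$ itself. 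Since $F$ is a sheaf, there is a unique $x\in F(U\otimes V)$ with $F(id_U\otimes f_i)(x)=x_{f_i}$ for all $i$, and as $F(U\otimes-)(f_i)=F(id_U\otimes f_i)$ this $x$ is the required unique gluing, so $F(U\otimes-)$ is a sheaf. (If instead one wants $F(-\otimes U)$ to be a sheaf, the symmetric argument goes through using the morphism $r$ from the fifth axiom.)

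I do not expect a genuine obstacle here: the fifth axiom was tailored precisely to provide the factorization previously extracted from Lemma~\ref{lem:commutewhereweneed}, and --- unlike Proposition~\ref{prop:Fotimesisasheaf} --- no symmetry of $\otimes$ is used. The only care needed is bookkeeping: tracking the contravariance of $F$ when turning the triangles into equations among $F$-images, confirming that ``compatible family for $F(U\otimes-)$'' really does unwind to the equations involving $id_U\otimes p^k_{V_i,V_j}$ (so the passage to ``compatible family for $F$'' is literal rather than merely morally true), and noting in passing that $F(U\otimes-)\colon C^{op}\to Set$, $g\mapsto F(id_U\otimes g)$, is a well-defined presheaf.
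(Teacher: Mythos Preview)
Your proposal is correct and is exactly the argument the paper has in mind: the paper does not spell out a proof but explains just before the statement that one reruns the proof of Proposition~\ref{prop:Fotimesisasheaf} using the morphism $l$ supplied by the fifth axiom in place of the $u$ from Lemma~\ref{lem:commutewhereweneed}, thereby dispensing with both the symmetry hypothesis and the requirement that $U\otimes-$ preserve equalizers. Your write-up in fact makes this more explicit than the paper does, including the careful check that compatibility for $F(U\otimes-)$ unwinds to the equalities involving $id_U\otimes p^k_{V_i,V_j}$ and that uniqueness of the gluing transfers directly.
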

So in this way we do not to consider that $(C,\otimes,1)$ is symmetric.

\section{Sheafification}\label{sec:sheafification}

In classic sheaf theory, the full subcategory inclusion from sheaves (Grothendieck topos) to presheaves has a left adjoint functor that preserves finite limits.  The sheafification is that left adjoint functor. In the case of Grothendieck toposes, there are at least two ways to construct/find the sheafification. One of them consists of considering a semi-sheafification, also know as the \textit{plus construction}, that sends presheaves into separated presheaves by taking the colimit over all coverings (in the sense of a Grothendieck topology) for a fixed object in the category. If the presheaf already was separated, then the semi-sheafification gives a sheaf. So sheafification is the process of applying the semi-sheafification twice. Finally, it may be shown that this is left adjoint to the inclusion functor from sheaves to presheaves, and that the sheafification preserves finite limits \cite{maclane1992sheaves}. The idea of applying the semi-sheafification twice may be used in the case of Grothendieck pretopologies, as in \cite[Chapter 1]{MakkaiReyesFirstOrder}. Unfortunately, the obvious way to replicate it for Grothendieck prelopologies does not work. The first application of the semi-sheafification, even in the quantalic case, does not work: let $P$ be a  presheaf and $\{V_i\}_{i\in I}$ a covering of $V$. In the localic case, for each $V \leq U$ we define the map $P^+(U) \to P^+(V)$ by  $\{x_i \in P(U_i)\} \mapsto \eta_{\mathcal{V}}(\{x_{i|_{V\wedge U_i}}\in P(V)\})$, where $\eta_{\mathcal{V}}$ is the map from the set of compatibles families $Comp(\mathcal{V},P)$ to the colimit $P^+(V) = \underset{\mathcal{V} \in K(V)}{\varinjlim} Comp(\mathcal{V},P)$. Since $V\wedge U_i$ is a covering of $V$ whenever $U = \bigvee_{i \in I}U_i$, we have that $x_{i|_{V\wedge U_i}}\in P(V)$ and the semi-sheafification is a functor. However, in the quantalic case, $\odot$ is not idempotent and this implies that $V\odot U_i$ is \textbf{not} a covering of $V$ and then we are not able to define a map $P^+(U) \to P^+(V)$.

The other way to define the sheafification consists of looking at it in terms of local isomorphisms. A standard reference is \cite[Section 16]{Kashiwara_2006}. We will follow this approach, but we need to reintroduce sheaves. Recall that an $L$-cover is a cover in the sense of a Grothendieck prelopology.   

\begin{definition}\label{df:sieve}
Let $\{f_i : U_i \to U\}_{i \in I}$ be an $L$-cover of $U$. The \textbf{sieve} $S(\{U_i\})$ of $\{f_i : U_i \to U\}_{i \in I}$ is defined as the following coequalizer in $PSh(\mathcal{C})$:
\[\begin{tikzcd}
	{\coprod\limits_{i,j}y(u_i)\tensor[_{y(f_i)}]{\star}{}_{y(f_j)}y(u_j)} & {\coprod\limits_{i}y(u_i)} & {S(\{u_i\})} & {}
	\arrow[shift right=1, from=1-1, to=1-2]
	\arrow[from=1-2, to=1-3]
	\arrow[shift left=1, from=1-1, to=1-2]
\end{tikzcd}\]
where $\star$ is the Day convolution $PSh(\mathcal{C})$, $y$ is the Yoneda embedding, and the coproduct on the left is over the pseudo-pullbacks $y(U_j)\tensor[_{y(f_i)}]{\star}{}_{y(f_j)}y(U_j)$.
\end{definition}

\begin{remark}
This is the correspondent generalization of the sieve of a covering family in the sense of a Grothendieck pretopology.
\end{remark}

\begin{remark}
    The sieve $S(\{U_i\})$ of $\{f_i : U_i \to U\}_{i \in I} \in L(U)$ is a presheaf since it is a colimit of presheaves.
\end{remark}

We are not concerned, at this moment, with finding examples for this definition. All we want is to show that the inclusion $i : Sh(\mathcal{C}, L) \to PSh(\mathcal{C})$ has a left adjoint, and work with this abstract setting will allow us to do it. 

\begin{remark}
We have that 
\begin{tikzcd}[ampersand replacement=\&]
	{y(U_i)} \& {y(U)}
	\arrow[from=1-1, to=1-2]
\end{tikzcd} coequalizes 
\begin{tikzcd}[ampersand replacement=\&]
	{y(U_i)\tensor[_{y(f_i)}]{\star}{_{y(f_j)}}y(U_j)} \& {y(U_i)}
	\arrow[shift right, from=1-1, to=1-2]
	\arrow[shift left, from=1-1, to=1-2]
\end{tikzcd}. So, by the universal property of the coequalizer, we obtain a canonical morphism $$i_{\{U_i\}} : S(\{U_i\}) \to y(U) $$
for all $L$-cover $\{U_i\}_{i\in I}$ of $U.$
\end{remark}

With the above notion of sieves we can say that a presheaf $P$ is a sheaf if it is a local object with respect to all $i_{\{U_i\}}$. In other words: 

\begin{definition}\label{sheaf_as_local_object}
A \textbf{sheaf} in $(C,L)$ is a presheaf $P \in PSh(C)$ such that for all $L$-cover $\{f_i : U_i \to U\}_{i\in I}$ the hom-functor $Hom_{PSh(C)}(-,P)$ sends the canonical morphisms $i_{\{U_i\}} : S(\{U_i\}) \to y(U) $ to isomorphisms.
\[\begin{tikzcd}
	{Hom_{PSh(\mathcal{C})}(i_{\{U_i\}},P):Hom_{PSh(\mathcal{C})}(y(U),P)} & {Hom_{PSh(\mathcal{C})}(S(\{U_i\}),P)}
	\arrow["\cong", from=1-1, to=1-2]
\end{tikzcd}\]
\end{definition}
The above definition is saying that sheaves are \textit{local objects} with respect to the class of morphisms  $S(\{U_i\}) \to y(U)$.

Next, we want to show that when the base category $C$ is semicartesian and admits pseudo-pullbacks then the above definition coincides with the one we introduced before (Definition \ref{grothsheaf}). First we prove a useful lemma.

\begin{lemma}\label{lem:ppb_is_strongfunctor}
    If  $(\mathcal{C},\otimes, 1)$ is a  semicartesian category that admits pseudo-pullbacks then $$y(U_i\tensor[_{y(f_i)}]{\otimes}{_{y(f_j)}}U_j) \cong y(U_i)\tensor[_{y(f_i)}]{\star}{_{y(f_j)}} y(U_j)$$
\end{lemma}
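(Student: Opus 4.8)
The plan is to show the two presheaves agree by exhibiting the Day-convolution pseudo-pullback $y(U_i)\tensor[_{y(f_i)}]{\star}{_{y(f_j)}} y(U_j)$ as a representable, namely represented by $U_i\tensor[_{f_i}]{\otimes}{_{f_j}}U_j$. The key observation is that the Yoneda embedding $y\colon\mathcal{C}\to PSh(\mathcal{C})$ is strong monoidal for the Day convolution $\star$, so $y(A)\star y(B)\cong y(A\otimes B)$ naturally; this is a standard fact about Day convolution that I will invoke. Since $\mathcal{C}$ is semicartesian, $1$ is terminal, so $y(1)$ is the terminal presheaf, which is the monoidal unit for $\star$; thus $y$ also carries the ``projection'' structure $\pi_1,\pi_2$ of the pseudo-pullback in $\mathcal{C}$ to the corresponding projections $\star$-side. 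Concretely, $y(\pi^1_{A,B})\colon y(A\otimes B)\to y(A)$ corresponds under the iso $y(A)\star y(B)\cong y(A\otimes B)$ to the map $y(A)\star y(B)\to y(A)\star y(1)\cong y(A)$ induced by $!_B\colon B\to 1$, and similarly for $\pi_2$.

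First I would recall the explicit construction: the pseudo-pullback $y(U_i)\tensor[_{y(f_i)}]{\star}{_{y(f_j)}} y(U_j)$ is by definition the equalizer in $PSh(\mathcal{C})$ of the pair
\begin{tikzcd}[ampersand replacement=\&]
	{y(U_i)\star y(U_j)} \& {y(U)}
	\arrow["{y(f_j)\circ \pi_2}"', shift right, from=1-1, to=1-2]
	\arrow["{y(f_i)\circ \pi_1}", shift left, from=1-1, to=1-2]
\end{tikzcd}
where the projections $\pi_1,\pi_2$ are the $\star$-projections onto $y(U_i)$ and $y(U_j)$ built from the unit $y(1)$. Next I would transport this diagram along the strong monoidal isomorphism $y(U_i)\star y(U_j)\cong y(U_i\otimes U_j)$ and $y(U)\cong y(U)$ (identity), checking that under this identification $y(f_i)\circ\pi_1$ becomes $y(f_i\circ\pi^1_{U_i,U_j}) = y(f_i\circ\pi_1^{\mathcal{C}})$ and likewise $y(f_j)\circ\pi_2$ becomes $y(f_j\circ\pi^2_{U_i,U_j})$. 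This uses naturality of the monoidal coherence isos of Day convolution together with the compatibility of $y$ with the terminal object (semicartesianity). Then the equalizer of the transported pair in $PSh(\mathcal{C})$ is, by the Yoneda embedding preserving the limits it is built from --- here we need only that $y$ preserves the equalizer defining the pseudo-pullback, which holds because $y$ preserves all limits --- isomorphic to $y$ of the equalizer in $\mathcal{C}$ of
\begin{tikzcd}[ampersand replacement=\&]
	{U_i\otimes U_j} \& {U}
	\arrow["{f_j\circ \pi^2_{U_i,U_j}}"', shift right, from=1-1, to=1-2]
	\arrow["{f_i\circ \pi^1_{U_i,U_j}}", shift left, from=1-1, to=1-2]
\end{tikzcd}
which is exactly $U_i\tensor[_{f_i}]{\otimes}{_{f_j}}U_j$ by the definition of pseudo-pullback. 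Chaining the isomorphisms gives the claim, and naturality in $(U_i,f_i)$ and $(U_j,f_j)$ follows since every step is natural.

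\textbf{Main obstacle.} The delicate point is not the computation of the equalizer --- since $y$ is continuous, $y$ of an equalizer is the equalizer of the image diagram --- but rather verifying carefully that the two parallel arrows match up after transport, i.e.\ that the $\star$-projections $\pi_1,\pi_2$ on $y(U_i)\star y(U_j)$ correspond precisely to $y(\pi^1_{U_i,U_j})$ and $y(\pi^2_{U_i,U_j})$ under the strong monoidal structure. This requires unwinding the definition of the Day convolution projections through the iso $y(1)\cong \mathds{1}_{PSh(\mathcal{C})}$ and the coherence isomorphisms $\rho,\lambda$, and matching them against $\pi_1 = \rho_{U_i}\circ(id\otimes !_{U_j})$, $\pi_2 = \lambda_{U_j}\circ(!_{U_i}\otimes id)$ from the definition of pseudo-pullback in $\mathcal{C}$. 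Once that coherence bookkeeping is done, everything else is formal.
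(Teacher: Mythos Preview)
Your proposal is correct and follows essentially the same route as the paper: both use that the Yoneda embedding is strong monoidal for Day convolution to identify $y(U_i)\star y(U_j)$ with $y(U_i\otimes U_j)$, and that $y$ preserves limits to match the two equalizers up to isomorphism. The paper in fact glosses over the coherence check you flag as the ``main obstacle'' (that the $\star$-projections correspond to $y(\pi^k_{U_i,U_j})$ under the strong monoidal isomorphism), so your treatment is if anything more careful than the paper's.
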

\begin{proof}
Notice that applying the Yoneda embedding in the pseudo-pullback we have the following, since $y$ preserves limits:

\[\begin{tikzcd}
	{U_i\tensor[_{f_i}]{\otimes}{_{f_j}}U_j} &&& {y(U_i\tensor[_{y(f_i)}]{\otimes}{_{y(f_j)}}U_j)} \\
	& {U_i\otimes U_j} & {U_j} & {} & {y(U_i\otimes U_j)} & {y(U_j)} \\
	& {U_i} & U && {y(U_i)} & {y(U)}
	\arrow[from=2-3, to=3-3]
	\arrow[from=3-2, to=3-3]
	\arrow[from=2-2, to=2-3]
	\arrow[from=2-2, to=3-2]
	\arrow[from=1-1, to=2-2]
	\arrow[curve={height=12pt}, from=1-1, to=3-2]
	\arrow[shift left=1, curve={height=-12pt}, from=1-1, to=2-3]
	\arrow[from=2-5, to=2-6]
	\arrow[from=1-4, to=2-5]
	\arrow[from=2-5, to=3-5]
	\arrow[from=3-5, to=3-6]
	\arrow[from=2-6, to=3-6]
	\arrow[curve={height=12pt}, from=1-4, to=3-5]
	\arrow[curve={height=-18pt}, from=1-4, to=2-6]
	\arrow["y"{pos=0.3}, shorten <=23pt, shorten >=70pt, Rightarrow, from=2-3, to=2-5]
\end{tikzcd}\]

In other words, if $U_i\tensor[_{f_i}]{\otimes}{_{f_j}}U_j$ is the equalizer of the commutative square on the right, then $y(U_i\tensor[_{y(f_i)}]{\otimes}{_{y(f_j)}}U_j)$ is the equalizer of  the commutative square on the left.


Since the Yoneda embedding is a strong monoidal functor $(C,\otimes, 1) \to (PSh(\mathcal{C}),\star, y(1))$, we have $y(U_i \otimes U_j) \cong y(U_i)\star y(U_j)$. Then we have the following pseudo-pullback diagram
\[\begin{tikzcd}[ampersand replacement=\&]
	{y(U_i)\tensor[_{y(f_i)}]{\star}{_{y(f_j)}} y(U_j)} \\
	\& {y(U_i)\star y(U_j)} \& {y(U_j)} \\
	\& {y(U_i)} \& {y(U)}
	\arrow[from=2-2, to=2-3]
	\arrow[from=1-1, to=2-2]
	\arrow[from=2-2, to=3-2]
	\arrow[from=3-2, to=3-3]
	\arrow[from=2-3, to=3-3]
	\arrow[curve={height=12pt}, from=1-1, to=3-2]
	\arrow[curve={height=-18pt}, from=1-1, to=2-3]
\end{tikzcd}\]

Since the equalizer is unique, up to isomorphism, this implies that  $$y(U_i\tensor[_{y(f_i)}]{\otimes}{_{y(f_j)}}U_j) \cong y(U_i)\tensor[_{y(f_i)}]{\star}{_{y(f_j)}} y(U_j)$$
\end{proof}

 If the pseudo-pullback in the definition of a sieve is a pullback, then the $L$-cover is a Grothendieck pretopology and so we obtain a sheaf equipped with a Grothendieck pretopology. Moreover, under pullbacks, if $\{f_i: U_i \to U\}_{i \in I}$ is a Grothendieck pretopology covering of $U$ and $W$ is an object in $\mathcal{C}$, then $S(\{U_i\})(W)$ is described as the set of morphisms $h: W \to U$ such that each $h$ factors through one of the $U_i$. Indeed, since $S(\{U_i\})(W)$ is a coequalizer in $Set$ we have to describe the proper equivalence relation in $\coprod\limits_i y(U_i)$: given $\phi_i : W \to U_i$, we say that $\phi_i \sim \phi_j $ if there is $\phi_{ij}: W \to U_i \otimes_U U_j$ such that $p^1_{ij}\circ \phi_{ij} = \phi_i$ and $p^2_{ij}\circ \phi_{ij} = \phi_j$. When the pseudo-pullback $\otimes_U$ is a weak pullback $U_i \times_U U_j$ such equivalence relation is equivalent to saying that each $h : W \to U$ factors through $U_i$'s (and $W \to U_i \to U$ coincides with $W \to U_j \to U$ for all $i,j\in I$) because of the definition of the weak pullback, as we see below:
\[\begin{tikzcd}[ampersand replacement=\&]
	W \&\& {U_j} \\
	\& {U_i\times_U U_j} \\
	{U_i} \&\& U
	\arrow["{\phi_i}"', from=1-1, to=3-1]
	\arrow["{f_i}"', from=3-1, to=3-3]
	\arrow["{f_j}", from=1-3, to=3-3]
	\arrow["{p^1_{ij}}", from=2-2, to=3-1]
	\arrow["{p^2_{ij}}"', from=2-2, to=1-3]
	\arrow["h"{description}, curve={height=18pt}, from=1-1, to=3-3]
	\arrow["{\phi_j}", from=1-1, to=1-3]
	\arrow["{\phi_{ij}}", dashed, from=1-1, to=2-2]
\end{tikzcd}\]

However, the universal property of the pseudo-pullback (which is an equalizer) is not enough to give us that the existence of  $\phi_{ij}: W \to U_i \otimes_U U_j$ implies that $W \to U_i \to U$ and $W \to U_j \to U$ coincide. In other words, we do not have that the outer square in the following diagram commutes
\[\begin{tikzcd}[ampersand replacement=\&]
	W \&\& {U_j} \\
	\& {U_i\otimes_U U_j} \\
	{U_i} \&\& U
	\arrow["{\phi_i}"', from=1-1, to=3-1]
	\arrow["{f_i}"', from=3-1, to=3-3]
	\arrow["{f_j}", from=1-3, to=3-3]
	\arrow["{p^1_{ij}}", from=2-2, to=3-1]
	\arrow["{p^2_{ij}}"', from=2-2, to=1-3]
	\arrow["{\phi_j}", from=1-1, to=1-3]
	\arrow["{\phi_{ij}}", dashed, from=1-1, to=2-2]
\end{tikzcd}\]
The consequence is that in general the canonical arrow $i_{\{U_i\}} : S(\{U_i\}) \to y(U)$ is a monomorphism only when the pseudo-pullback is at least a weak pullback.

\begin{proposition}
If  $(\mathcal{C},\otimes,1)$ is a semicartesian category that admits pseudo-pullbacks, then the definition of sheaf as a local object (\ref{sheaf_as_local_object}) coincides with the first definition of a sheaf as a functor that makes a certain diagram an equalizer (\ref{df:sheaf_on_prelopology}).
\end{proposition}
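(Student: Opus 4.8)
The plan is to show that both notions of sheaf amount to the same equalizer condition on the presheaf $P$, using the Yoneda lemma to translate the "local object" formulation into the "equalizer diagram" formulation. The key bridge is Lemma \ref{lem:ppb_is_strongfunctor}, which identifies $y(U_i \tensor[_{y(f_i)}]{\otimes}{_{y(f_j)}} U_j)$ with the Day-convolution pseudo-pullback $y(U_i)\tensor[_{y(f_i)}]{\star}{_{y(f_j)}} y(U_j)$, together with the fact that $\mathrm{Hom}_{PSh(\mathcal{C})}(-,P)$ is continuous (sends colimits in the first variable to limits) and that Yoneda gives $\mathrm{Hom}_{PSh(\mathcal{C})}(y(V),P) \cong P(V)$ naturally in $V$.

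\medskip

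First I would apply the contravariant functor $\mathrm{Hom}_{PSh(\mathcal{C})}(-,P)$ to the defining coequalizer of the sieve
\[\begin{tikzcd}
	{\coprod\limits_{i,j}y(U_i)\tensor[_{y(f_i)}]{\star}{_{y(f_j)}}y(U_j)} & {\coprod\limits_{i}y(U_i)} & {S(\{U_i\})}
	\arrow[shift right=1, from=1-1, to=1-2]
	\arrow[from=1-2, to=1-3]
	\arrow[shift left=1, from=1-1, to=1-2]
\end{tikzcd}\]
Since $\mathrm{Hom}_{PSh(\mathcal{C})}(-,P)$ turns this coequalizer into an equalizer, and turns the coproducts into products, I get that $\mathrm{Hom}_{PSh(\mathcal{C})}(S(\{U_i\}),P)$ is the equalizer of the two maps
\[
\prod_i \mathrm{Hom}_{PSh(\mathcal{C})}(y(U_i),P) \rightrightarrows \prod_{i,j}\mathrm{Hom}_{PSh(\mathcal{C})}\!\big(y(U_i)\tensor[_{y(f_i)}]{\star}{_{y(f_j)}}y(U_j),\,P\big).
\]
Then I would rewrite every hom-set via Yoneda and Lemma \ref{lem:ppb_is_strongfunctor}: $\mathrm{Hom}_{PSh(\mathcal{C})}(y(U_i),P)\cong P(U_i)$, and
$\mathrm{Hom}_{PSh(\mathcal{C})}(y(U_i)\tensor[_{y(f_i)}]{\star}{_{y(f_j)}}y(U_j),P)\cong \mathrm{Hom}_{PSh(\mathcal{C})}(y(U_i\tensor[_{f_i}]{\otimes}{_{f_j}}U_j),P)\cong P(U_i\tensor[_{f_i}]{\otimes}{_{f_j}}U_j)$. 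Under these identifications $\mathrm{Hom}_{PSh(\mathcal{C})}(S(\{U_i\}),P)$ becomes exactly the equalizer of the pair $p,q$ of Definition \ref{grothsheaf}, and the canonical map $\mathrm{Hom}_{PSh(\mathcal{C})}(i_{\{U_i\}},P):\mathrm{Hom}_{PSh(\mathcal{C})}(y(U),P)\cong P(U)\to \mathrm{Hom}_{PSh(\mathcal{C})}(S(\{U_i\}),P)$ becomes precisely the map $e$. Hence $P$ is a local object with respect to $i_{\{U_i\}}$ (i.e. $\mathrm{Hom}_{PSh(\mathcal{C})}(i_{\{U_i\}},P)$ is an iso) if and only if $e:P(U)\to\prod_i P(U_i)$ is the equalizer of $p,q$ — which is exactly Definition \ref{df:sheaf_on_prelopology}.

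\medskip

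The main thing to be careful about — and what I expect to be the principal obstacle — is checking that the \emph{two parallel maps} produced by $\mathrm{Hom}(-,P)$ from the sieve coequalizer genuinely correspond to $p$ and $q$ after the Yoneda translation, i.e. that the two legs $\coprod_{i,j}y(U_i)\tensor[_{y(f_i)}]{\star}{_{y(f_j)}}y(U_j)\rightrightarrows\coprod_i y(U_i)$ are built from the projections $p^1_{i,j}$ and $p^2_{i,j}$ in the way the definition of the sieve intends. This requires unwinding how the coequalizer presenting $S(\{U_i\})$ is assembled from the pseudo-pullback projections (the arrows $e'\circ\pi^1$ and $e'\circ\pi^2$ of the pseudo-pullback diagram), and matching them index-by-index with the $F(p^1_{i,j})$, $F(p^2_{i,j})$ appearing in $p,q$. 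Once that bookkeeping is done, naturality of the Yoneda isomorphism does the rest and the equivalence of the two sheaf conditions is immediate. I would also note in passing that no commutativity/monomorphism subtleties of $i_{\{U_i\}}$ discussed just before the statement are needed here: the argument only uses that $\mathrm{Hom}(-,P)$ is continuous and the Yoneda lemma, so it goes through for arbitrary semicartesian $\mathcal{C}$ with pseudo-pullbacks.
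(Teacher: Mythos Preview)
Your proposal is correct and follows essentially the same route as the paper's proof: apply $\mathrm{Hom}_{PSh(\mathcal{C})}(-,P)$ to the sieve coequalizer, use continuity to get an equalizer, then invoke Yoneda and Lemma~\ref{lem:ppb_is_strongfunctor} to rewrite everything as the equalizer diagram of Definition~\ref{grothsheaf}. Your added care about matching the two parallel maps with $p$ and $q$ is a welcome piece of bookkeeping that the paper leaves implicit.
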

\begin{proof}
If we apply $Hom_{PSh(\mathcal{C})}(-,P)$ in the coequalizer that defines the notion of sieves, we have
\[\begin{tikzcd}[ampersand replacement=\&,column sep=tiny]
	{Hom_{PSh(\mathcal{C})}(\coprod\limits_{i,j}y(U_i)\tensor[_{y(f_i)}]{\star}{}_{y(f_j)}y(U_j)},P) \& {Hom_{PSh(\mathcal{C})}(\coprod\limits_{i}y(U_i),P)} \& {Hom_{PSh(\mathcal{C})}(S(\{U_i\}),P)} 
	\arrow[shift right=1, from=1-1, to=1-2]
	\arrow[from=1-2, to=1-3]
	\arrow[shift left=1, from=1-1, to=1-2]
\end{tikzcd}\]
Since such $Hom_{PSh(\mathcal{C})}(-,P)$ sends colimits to limits, we have the following equalizer diagram
\[\begin{tikzcd}[ampersand replacement=\&,column sep=tiny]
	{Hom_{PSh(\mathcal{C})}(S(\{U_i\}),P)} \& {\prod\limits_{i}Hom_{PSh(\mathcal{C})}(y(U_i),P)} \& {\prod\limits_{i,j}Hom_{PSh(\mathcal{C})}(y(U_i)\tensor[_{y(f_i)}]{\star}{_{y(f_j)}} y(U_j),P)}
	\arrow[shift right=1, from=1-2, to=1-3]
	\arrow[shift left=1, from=1-2, to=1-3]
	\arrow[from=1-1, to=1-2]
\end{tikzcd}\]

Applying the Yoneda Lemma:

\[\begin{tikzcd}
	{Hom_{PSh(\mathcal{C})}(S(\{U_i\}),P)} & {\prod\limits_{i}P(U_i)} & {\prod\limits_{i,j}Hom_{PSh(\mathcal{C})}(y(U_i)\tensor[_{y(f_i)}]{\star}{_{y(f_j)}} y(U_j),P)}
	\arrow[shift right=1, from=1-2, to=1-3]
	\arrow[shift left=1, from=1-2, to=1-3]
	\arrow[from=1-1, to=1-2]
\end{tikzcd}\]
So, $P$ is a sheaf if and only if the following diagram is an equalizer (for each $L$-cover $\{U_i \to U\}_{i \in I}$)
\[\begin{tikzcd}
	{Hom_{PSh(\mathcal{C})}(y(U),P)} & {\prod\limits_{i}P(U_i)} & {\prod\limits_{i,j}Hom_{PSh(\mathcal{C})}(y(U_i)\tensor[_{y(f_i)}]{\star}{_{y(f_j)}} y(U_j),P)}
	\arrow[shift right=1, from=1-2, to=1-3]
	\arrow[shift left=1, from=1-2, to=1-3]
	\arrow[from=1-1, to=1-2]
\end{tikzcd}\]
Applying the Yoneda Lemma again, $P$ is a sheaf iff the following diagram is an equalizer
\[\begin{tikzcd}
	{P(U)} & {\prod\limits_{i}P(U_i)} & {\prod\limits_{i,j}Hom_{PSh(\mathcal{C})}(y(U_i)\tensor[_{y(f_i)}]{\star}{_{y(f_j)}} y(U_j),P)}
	\arrow[shift right=1, from=1-2, to=1-3]
	\arrow[shift left=1, from=1-2, to=1-3]
	\arrow[from=1-1, to=1-2]
\end{tikzcd}\]
Since $(C,L)$ is a $L$-site and $C$ is a category with pseudo-pullbacks, the pseudo-pullbacks $y(U_j)\tensor[_{y(f_i)}]{\star}{_{y(f_j)}} y(U_j)$ are representable functors. Then we apply Lemma \ref{lem:ppb_is_strongfunctor} and the Yoneda Lemma to obtain 
$$Hom_{PSh(\mathcal{C})}(y(U_j)\tensor[_{y(f_i)}]{\star}{_{y(f_j)}} y(U_j),P) \cong P(U_j \tensor[_{f_i}]{\otimes}{_{f_j}} U_j ) $$
So the sheaf condition is equivalent to requiring that 
\[\begin{tikzcd}
	{P(U)} & {\prod\limits_{i}P(U_i)} & {\prod\limits_{i,j} P(U_i \tensor[_{f_i}]{\otimes}{_{f_j}} U_j )}
	\arrow[shift right=1, from=1-2, to=1-3]
	\arrow[shift left=1, from=1-2, to=1-3]
	\arrow[from=1-1, to=1-2]
\end{tikzcd}\]
is an equalizer diagram for all coverings.
\end{proof}

\begin{definition}
A  \textbf{morphism of sheaves} is just a morphism of the underlying presheaves. 
\end{definition}

\begin{remark}
The category of sheaves $Sh(C,L)$ is a full subcategory of the category of presheaves $PSh(\mathcal{C})$.
\end{remark}

Now we recall some definitions as given in $1.32$ and $1.35$ of \cite{adamek1994locally}:

\begin{definition}
\begin{enumerate}
    \item An object $K$ is said to be \textbf{orthogonal} to a morphism $m: A \to A'$ provided that for each morphism $f: A \to K$ there exists a unique morphism $f': A' \to K$ such that the following triangle commutes
\[\begin{tikzcd}
	A && {A'} \\
	& K
	\arrow["m", from=1-1, to=1-3]
	\arrow["f"', from=1-1, to=2-2]
	\arrow["{f'}", from=1-3, to=2-2]
\end{tikzcd}\]
\item For each class $\mathcal{M}$ of morphisms in a category $\mathcal{K}$ we denote by $\mathcal{M}^{\perp}$ the full subcategory of $\mathcal{K}$ of all objects orthogonal to each $m:A\to A'$ in $\mathcal{M}.$
\end{enumerate}
\end{definition}

\begin{definition}
Let $\lambda$ be a regular cardinal. A $\lambda$\textbf{-orthogonality} class is a class of the form $\mathcal{M}^{\perp}$ such that every morphism in $\mathcal{M}$ has a $\lambda$-presentable\footnote{An object $C$ in a category $\mathcal{C}$ is $\lambda$-presentable, for $\lambda$ a regular cardinal, when the representable functor $Hom_{\mathcal{C}}(C,-)$ preserves $\lambda$-filtered limits.} domain and a $\lambda$-presentable codomain.
\end{definition}

\begin{theorem}\textbf{(Theorem $1.39$, \cite{adamek1994locally})}\label{teo1.39adamek}
Let $\mathcal{K}$ be a locally $\lambda$-presentable category. The following conditions on a full subcategory $\mathcal{A}$ of $\mathcal{K}$ are equivalent:
\begin{enumerate}
    \item[(i)] $\mathcal{A}$ is a $\lambda$-orthogonality class in $\mathcal{K}$;
    \item[(ii)] $\mathcal{A}$ is a reflective subcategory of $\mathcal{K}$ closed under $\lambda$-directed colimits.
\end{enumerate}
Furthermore, they imply that $\mathcal{A}$ is locally $\lambda$-presentable.
\end{theorem}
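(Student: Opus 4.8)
The plan is to prove the two implications separately and then deduce the closing assertion; throughout one uses that a locally $\lambda$-presentable category $\mathcal{K}$ is complete and cocomplete and has, up to isomorphism, only a \emph{set} of $\lambda$-presentable objects, hence only a set of morphisms between $\lambda$-presentable objects.

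\emph{(i) $\Rightarrow$ (ii).} Write $\mathcal{A} = \mathcal{M}^{\perp}$ with every $m \colon A \to A'$ in $\mathcal{M}$ having $\lambda$-presentable domain and codomain; by the remark above we may take $\mathcal{M}$ to be a set. Closure under $\lambda$-directed colimits is the easy half: orthogonality of $K$ to $m$ says exactly that precomposition $\mathcal{K}(A',K) \to \mathcal{K}(A,K)$ is a bijection, and since $\mathcal{K}(A,-)$ and $\mathcal{K}(A',-)$ preserve $\lambda$-directed colimits, a $\lambda$-directed colimit of such $K$ is again such. For reflectivity I would run the transfinite orthogonal-reflection construction: given $K \in \mathcal{K}$, build a chain $K = K_0 \to K_1 \to \cdots \to K_\alpha \to \cdots$ where, at a successor stage, $K_{\alpha+1}$ is obtained from $K_\alpha$ by a single pushout that simultaneously adjoins a filler $A' \to K_{\alpha+1}$ for every $m \colon A \to A'$ in $\mathcal{M}$ and every $f \colon A \to K_\alpha$ (the existence part) and coequalizes every pair $g, g' \colon A' \to K_\alpha$ with $gm = g'm$ (the uniqueness part), and where at limit stages one takes the colimit. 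Because the domains and codomains occurring are $\lambda$-presentable and $\lambda$ is regular, every map into $K_\lambda = \operatorname{colim}_{\alpha < \lambda} K_\alpha$ out of such an object factors through some earlier $K_\alpha$, so all obstructions have already been resolved: $K_\lambda \in \mathcal{M}^{\perp} = \mathcal{A}$, and a direct check shows the coprojection $K \to K_\lambda$ is universal among maps from $K$ into objects of $\mathcal{A}$, i.e.\ it is the reflection.

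\emph{(ii) $\Rightarrow$ (i).} Let $r$ be left adjoint to the inclusion $J \colon \mathcal{A} \hookrightarrow \mathcal{K}$, with unit $\eta$. Since $\mathcal{A}$ is closed under $\lambda$-directed colimits, these are computed as in $\mathcal{K}$, and as $r$ preserves all colimits it preserves the $\lambda$-directed ones. The key step is to factor, for each $\lambda$-presentable $P$, the unit $\eta_P \colon P \to rP$ as $P \xrightarrow{h_P} Q_P \to rP$ with $Q_P$ $\lambda$-presentable and $r(h_P)$ an isomorphism: write $rP$ as a $\lambda$-directed colimit of $\lambda$-presentables, factor $\eta_P$ through one of them, and use a short diagram chase (applying $r$ and using that $\eta_{rP}$ is invertible) to adjust the choice so that $r(h_P)$ becomes invertible. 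Let $\mathcal{M}$ be a set of representatives of the $h_P$. Then $\mathcal{A} = \mathcal{M}^{\perp}$: if $K \in \mathcal{A}$, the isomorphism $\mathcal{K}(-,K) \cong \mathcal{A}(r(-),K)$ turns each $h_P$ (with $r(h_P)$ iso) into a bijection, so $K$ is orthogonal to $\mathcal{M}$; conversely, if $K$ is orthogonal to $\mathcal{M}$, write $K = \operatorname{colim}_i P_i$ with the $P_i$ $\lambda$-presentable, use orthogonality to the $h_{P_i}$ to lift the colimit cocone through the $Q_{P_i}$ and assemble a morphism $rK \to K$, and then the uniqueness clause of orthogonality forces this morphism to be inverse to $\eta_K$; hence $K \cong rK \in \mathcal{A}$.

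\emph{The ``furthermore''.} $\mathcal{A}$ is reflective in the cocomplete $\mathcal{K}$, hence cocomplete. For $P$ $\lambda$-presentable in $\mathcal{K}$, $\mathcal{A}(rP,-) \cong \mathcal{K}(P, J(-))$ preserves $\lambda$-directed colimits (which coincide in $\mathcal{A}$ and in $\mathcal{K}$), so $rP$ is $\lambda$-presentable in $\mathcal{A}$; and each $A \in \mathcal{A}$ satisfies $A = rA = r(\operatorname{colim}_i P_i) = \operatorname{colim}_i rP_i$, a $\lambda$-directed colimit of $\lambda$-presentable objects of $\mathcal{A}$. So $\mathcal{A}$ is cocomplete with a strong generating set of $\lambda$-presentable objects, i.e.\ locally $\lambda$-presentable. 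I expect the main obstacle to be the transfinite orthogonal-reflection construction in (i) $\Rightarrow$ (ii) — simultaneously controlling the existence and uniqueness obstructions and verifying that the coprojection is the reflection — together with the delicate factorization of units through $\lambda$-presentable objects used in (ii) $\Rightarrow$ (i).
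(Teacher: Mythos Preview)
The paper does not give its own proof of this statement: it is quoted from Ad\'amek--Rosick\'y and invoked as a black box to conclude that $Sh(C,L)$ is reflective in $PSh(C)$. There is therefore nothing in the paper to compare your argument against. Your sketch is essentially the standard proof from the cited reference---the transfinite orthogonal-reflection construction for (i)$\Rightarrow$(ii), the factorization of reflection units through $\lambda$-presentable objects for (ii)$\Rightarrow$(i), and transport of $\lambda$-presentability along the reflector for the final clause---and it is sound at the level of detail you give.
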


We know that $Sh(C,L)$ is a full subcategory of $PSh(\mathcal{C})$, and $PSh(\mathcal{C})$ is a $\lambda$-presentable category,  for every regular cardinal $\lambda$ that is sufficiently big. So, if we prove that $Sh(C,L)$ is a $\lambda$-orthogonality class in $PSh(\mathcal{C})$, we apply the above theorem to obtain that $Sh(C,L)$ is a reflective subcategory of $PSh(\mathcal{C})$. 

\begin{proposition}
$Sh(C,L)$ is a $\lambda$-orthogonality class in $PSh(\mathcal{C})$.
\end{proposition}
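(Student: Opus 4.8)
The plan is to read off the statement directly from the reformulation of sheaves as local (orthogonal) objects. By Definition \ref{sheaf_as_local_object}, a presheaf $P$ is a sheaf exactly when, for every $L$-cover $\{f_i : U_i \to U\}_{i\in I}$, the map $Hom_{PSh(\mathcal{C})}(i_{\{U_i\}},P) : Hom_{PSh(\mathcal{C})}(y(U),P) \to Hom_{PSh(\mathcal{C})}(S(\{U_i\}),P)$ is a bijection. Unwinding what bijectivity of this map means, this says precisely that $P$ is orthogonal to the morphism $i_{\{U_i\}} : S(\{U_i\}) \to y(U)$: every $g : S(\{U_i\}) \to P$ extends uniquely along $i_{\{U_i\}}$ to a morphism $y(U) \to P$. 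Hence, setting $\mathcal{M} := \{\, i_{\{U_i\}} : S(\{U_i\}) \to y(U) \mid U \in \mathcal{C},\ \{U_i \to U\}_{i\in I} \in L(U) \,\}$, we have an equality of full subcategories $Sh(C,L) = \mathcal{M}^{\perp}$ inside $PSh(\mathcal{C})$. So it remains only to choose a regular cardinal $\lambda$ for which every morphism in $\mathcal{M}$ has $\lambda$-presentable domain and $\lambda$-presentable codomain.

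Next I would fix $\lambda$. Since $\mathcal{C}$ is small and, by our standing convention, the covering families are small, the collection $\mathcal{M}$ is (essentially) a set, and we may pick a regular cardinal $\lambda$ such that $PSh(\mathcal{C})$ is locally $\lambda$-presentable and $|I| < \lambda$ for every index set $I$ occurring in a covering family of $L$. (Presheaf categories over small categories are locally finitely presentable, so the first requirement is harmless: any regular cardinal works there.) With such a $\lambda$, the codomains are immediately handled: each $y(U)$ is a representable presheaf, and representables are finitely presentable objects of $PSh(\mathcal{C})$ — indeed $Hom_{PSh(\mathcal{C})}(y(U),-) \cong \mathrm{ev}_U$ preserves all colimits, in particular $\lambda$-filtered ones — hence $\lambda$-presentable for any $\lambda \geq \aleph_0$.

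Then I would verify that each domain $S(\{U_i\})$ is $\lambda$-presentable. By Lemma \ref{lem:ppb_is_strongfunctor}, the pseudo-pullback presheaves appearing in Definition \ref{df:sieve} are representable, namely $y(U_i)\tensor[_{y(f_i)}]{\star}{_{y(f_j)}}y(U_j) \cong y(U_i \tensor[_{f_i}]{\otimes}{_{f_j}} U_j)$, hence finitely presentable. Therefore the coequalizer diagram computing $S(\{U_i\})$ — the parallel pair between $\coprod_{i,j} y(U_i)\tensor[_{y(f_i)}]{\star}{_{y(f_j)}}y(U_j)$ and $\coprod_{i} y(U_i)$ — is a diagram of $\lambda$-presentable objects indexed by a $\lambda$-small category, because $|I| < \lambda$ and $\lambda$ is regular, so $|I \times I| < \lambda$. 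A $\lambda$-small colimit of $\lambda$-presentable objects in a locally $\lambda$-presentable category is again $\lambda$-presentable, so $S(\{U_i\})$ is $\lambda$-presentable.

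Combining the three steps: every morphism of $\mathcal{M}$ has $\lambda$-presentable domain and $\lambda$-presentable codomain, and $Sh(C,L) = \mathcal{M}^{\perp}$, so $Sh(C,L)$ is a $\lambda$-orthogonality class in $PSh(\mathcal{C})$. The only genuinely delicate point is the size bookkeeping in the second step — one must make sure a single regular $\lambda$ simultaneously bounds all covering families and keeps $PSh(\mathcal{C})$ locally $\lambda$-presentable — and this is exactly where the standing smallness conventions are used; the rest is formal, resting on the translation of the local-object condition into orthogonality and on Lemma \ref{lem:ppb_is_strongfunctor} to see that sieves are small colimits of representables.
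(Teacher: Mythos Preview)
Your proof is correct and follows the same approach as the paper: identify $Sh(C,L)$ with $\mathcal{M}^{\perp}$ for $\mathcal{M}$ the class of canonical maps $i_{\{U_i\}}$, then choose a regular $\lambda$ bounding the sizes of all covering families so that domains and codomains are $\lambda$-presentable. Your argument is in fact more careful than the paper's: where the paper simply asserts the conclusion after noting that $S(\{U_i\})$ and $y(U)$ are objects of $PSh(\mathcal{C})$, you actually verify $\lambda$-presentability by invoking Lemma~\ref{lem:ppb_is_strongfunctor} to exhibit each sieve as a $\lambda$-small colimit of representables.
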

\begin{proof}
By definition, $F$ is a sheaf if and only if $$Hom_{PSh(\mathcal{C})}(i_{U_i},F):Hom_{PSh(\mathcal{C})}(y(U),F)\to Hom_{PSh(\mathcal{C})}(S(\{U_i\}),F) $$ is an isomorphism in $Set$, so $Hom_{PSh(\mathcal{C})}(i_{U_i},F)$ is a bijection. This means that for all $\varphi \in Hom_{PSh(\mathcal{C})}(S(\{U_i\}),F) $ there is a unique $\psi \in Hom_{PSh(\mathcal{C})}(y(U),F)$ such that $\psi \circ i_{U_i} = \varphi$. In other words, the desired triangle commutes:  

\[\begin{tikzcd}
	{S(\{U_i\})} && {y(U)} \\
	& F
	\arrow["{i_{U_i}}", from=1-1, to=1-3]
	\arrow["{\forall \varphi}"', from=1-1, to=2-2]
	\arrow["{\exists!\psi }", dashed, from=1-3, to=2-2]
\end{tikzcd}\]

Thus, all sheaves are orthogonal to $i_{U_i}.$
Then $Sh_L(C)  = \mathcal{M}^{\perp}$, where $\mathcal{M} = \{i_{U_i}: S(\{U_i\}) \to y(U) \,:\, \{U_i \to U\}_{i\in I} \in L(U)\}$ is a class of morphisms in $PSh(\mathcal{C})$. 

Since $C$ is a small category, the covering families of each $U$ are sets and we have a cardinal for each one of those sets. The supremum of all those cardinals is a cardinal again and then there is an even bigger regular cardinal $\lambda$ so that $PSh(\mathcal{C})$ is locally $\lambda$-presentable. Since $S(\{U_i\})$ and $y(U)$ are objects in $PSh(\mathcal{C})$, we conclude that $\mathcal{M}$ is a $\lambda$-orthogonality class in $PSh(\mathcal{C})$. 
\end{proof}
By definition of reflective subcategory:
\begin{corollary}
The inclusion functor $i: Sh(C,L) \to PSh(\mathcal{C})$ has a left adjoint functor $a : PSh(\mathcal{C}) \to Sh(C,L)$. Thus, $a$ preserves colimits.
\end{corollary}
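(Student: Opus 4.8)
The plan is to obtain this statement as a formal consequence of the preceding proposition together with the imported Theorem~\ref{teo1.39adamek}. First I would recall that the proposition identifies $Sh(C,L)$ with the orthogonality class $\mathcal{M}^{\perp}$ for $\mathcal{M} = \{\, i_{\{U_i\}} : S(\{U_i\}) \to y(U) \,\}$, and that the regular cardinal $\lambda$ there was chosen large enough that every member of $\mathcal{M}$ has $\lambda$-presentable domain and $\lambda$-presentable codomain \emph{and} $PSh(\mathcal{C}) = Set^{\mathcal{C}^{op}}$ is locally $\lambda$-presentable (the latter because $\mathcal{C}$ is small). Hence both hypotheses of Theorem~\ref{teo1.39adamek} are in force for this single $\lambda$.

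Next I would apply the equivalence (i)$\,\Leftrightarrow\,$(ii) of Theorem~\ref{teo1.39adamek}: since $Sh(C,L)$ is a $\lambda$-orthogonality class in the locally $\lambda$-presentable category $PSh(\mathcal{C})$, it is a reflective subcategory of $PSh(\mathcal{C})$ (and, in addition, closed under $\lambda$-directed colimits, a fact not needed here). By the definition of a reflective subcategory, the full inclusion $i : Sh(C,L) \to PSh(\mathcal{C})$ then admits a left adjoint $a : PSh(\mathcal{C}) \to Sh(C,L)$, namely the reflector. Finally, for the last assertion I would invoke the general fact that any left adjoint preserves every colimit existing in its domain; since colimits of presheaves are computed pointwise in $Set$, the category $PSh(\mathcal{C})$ is cocomplete, and therefore $a$ preserves all colimits.

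I do not expect a genuine obstacle here: the substantive work — translating the sheaf condition into orthogonality against the canonical maps $i_{\{U_i\}}$, and the cardinality bookkeeping — was already carried out in the proposition. The only point I would be careful to spell out is that the same $\lambda$ can serve simultaneously to witness the orthogonality-class description and to witness local $\lambda$-presentability of $PSh(\mathcal{C})$, so that Theorem~\ref{teo1.39adamek} applies verbatim.
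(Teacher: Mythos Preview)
Your proposal is correct and matches the paper's approach exactly: the paper derives the corollary immediately from the preceding proposition via Theorem~\ref{teo1.39adamek}, with the terse justification ``By definition of reflective subcategory''. You have simply unpacked that one-line argument in more detail, including the observation that left adjoints preserve colimits.
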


Our sheafification look like the one already available in the literature but they cannot be the same.  It is known that Grothendieck toposes are the left exact reflective subcategories of a presheaf category. However, while $Sh(C,L)$  is a reflective subcategory of $PSh(C)$, in \cite[Theorem 2]{luiza2024sheaves} we proved that  $Sh(Q)$ is not always a Grothendieck topos and thus our sheafification cannot be left exact/preserve all finite limits. That is reasonable: since we are considering a weaker notion of covering (Grothendieck prelopologies instead of Grothendieck pretopologies), then we obtain a weaker sheafification that is a left adjoint functor of the inclusion but does not preserve all finite limits. 

The reader may wonder if we can present the sheafification through a formula. To prove Theorem \ref{teo1.39adamek}, the authors use an ``Orthogonal-reflection Construction'' that relies on  a transfinite induction. Thus, theoretically, we can find a formula to describe the sheafification using transfinite induction, but in practice it does not seem to have an elucidating structure. 

Next, note that Theorem \ref{teo1.39adamek} also provides that $Sh(C,L)$ is a locally $\lambda$-presentable category. We recall that:

\begin{definition}\cite[Definition 5.2.1]{borceux1994handbook2}
A category $\mathcal{M}$ is locally $\lambda$-presentable, for a regular cardinal $\lambda$, when
\begin{enumerate}
    \item $\mathcal{M}$ is cocomplete;
    \item $\mathcal{M}$ has a set $(G_i)_{i \in I}$ of strong generators;
    \item Each generator $G_i$ is $\lambda$-presentable. 
\end{enumerate}
\end{definition}

So, by Theorem \ref{teo1.39adamek}:

\begin{corollary}\label{cor:Shcocompleteandgenerators}
    $Sh(C,L)$ is cocomplete, has a set of strong generators with each one of then being $\lambda$-presentable.
\end{corollary}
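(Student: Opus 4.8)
The plan is to derive this as an immediate consequence of Theorem \ref{teo1.39adamek}. First I would recall what the preceding proposition already gives us: $Sh(C,L)$ is a $\lambda$-orthogonality class in $PSh(\mathcal{C})$, namely $Sh(C,L) = \mathcal{M}^{\perp}$ for $\mathcal{M} = \{i_{\{U_i\}} : S(\{U_i\}) \to y(U)\}$, where $\lambda$ has been chosen large enough that $PSh(\mathcal{C})$ is locally $\lambda$-presentable and every morphism in $\mathcal{M}$ has $\lambda$-presentable domain and codomain. Theorem \ref{teo1.39adamek}, applied with $\mathcal{K} = PSh(\mathcal{C})$ and $\mathcal{A} = Sh(C,L)$, then tells us not only that $Sh(C,L)$ is reflective and closed under $\lambda$-directed colimits (which we already used for the sheafification), but also the ``Furthermore'' clause: $Sh(C,L)$ is locally $\lambda$-presentable.

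The second and final step is to unwind the definition of a locally $\lambda$-presentable category (Definition 5.2.1 of \cite{borceux1994handbook2}): such a category is cocomplete, admits a set $(G_i)_{i \in I}$ of strong generators, and each $G_i$ is $\lambda$-presentable. These three properties are exactly the content of the statement, so Corollary \ref{cor:Shcocompleteandgenerators} follows with no further work.

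The only point that requires any care — and it has already been dealt with in the preceding proposition — is securing a \emph{single} regular cardinal $\lambda$ that simultaneously makes $PSh(\mathcal{C})$ locally $\lambda$-presentable and witnesses $\lambda$-presentability of all the domains $S(\{U_i\})$ and codomains $y(U)$ appearing in $\mathcal{M}$. Because $\mathcal{C}$ is small, the totality of covering families forms a set, so one simply takes $\lambda$ above the supremum of the associated cardinals and enlarges it to a regular cardinal with the required presentability property. Once this $\lambda$ is fixed, there is genuinely no obstacle: the corollary is a direct reading-off from Theorem \ref{teo1.39adamek} together with the definition of local presentability.
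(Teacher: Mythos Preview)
Your proposal is correct and follows essentially the same approach as the paper: invoke the ``Furthermore'' clause of Theorem~\ref{teo1.39adamek} to conclude that $Sh(C,L)$ is locally $\lambda$-presentable, then read off the three properties from the definition of local $\lambda$-presentability. The paper presents this even more tersely, simply recalling the definition and stating the corollary, while your added remark about securing a single $\lambda$ is already handled in the preceding proposition, as you correctly note.
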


Now, we will show that $a: PSh(C) \to Sh(C,L) $ preserves the monoidal closed structure of $PSh(C)$.
\begin{definition}\label{df:normal_enrichment}
    Let $\psi \dashv \phi: D \to B$ be an adjoint pair.
    \begin{enumerate}
        \item $\psi \dashv \phi$ is a  \textbf{reflective embedding} if $\phi$ is full and faithful on morphisms.
        \item When $B$ has a fixed monoidal closed structure the reflective embedding is called \textbf{normal} if there exists a monoidal closed structure on $D$ and monoidal functor structures on $\psi$ and $\phi$ for which $\phi$ is a normal closed functor and the unit and counit of the adjunction are monoidal natural transformations.
    \end{enumerate}
\end{definition}

The definition of ``closed functor'' is in  \cite{eilenberg1966closed} and the definition of ``normal closed functor'' is available from  \cite{barr1969adjunction}. We will not copy those definitions here since what is interesting for us is Day's observation \cite{day1973note} that normal enrichment is unique  (up to monoidal isomorphism) and it exists if and only if one condition of the following equivalent conditions is  satisfied:
\begin{align}
    \eta[b,\phi d] & \colon [b,\phi d] \cong \phi\psi[b,\phi d];\\
    [\eta , 1]  & \colon [\phi\psi b,\phi d] \cong [b,\phi d]; \\
    \psi(\eta \otimes 1 ) & \colon \psi(b \otimes b') \cong \psi(\phi\psi b \otimes b');\\
    \psi(\eta \otimes \eta ) & \colon \psi(b \otimes b') \cong  \psi(\phi\psi b \otimes \phi\psi b') \label{eq:monoidal_structure}
\end{align}
where $\eta$ is the unit of the adjunction $\psi \dashv \phi: D \to B$, $d$ is any object of $D$, $b$ is any object of $B$ and $[-,-]$ is the internal hom. In particular, the components $\Tilde{\phi}: \phi(b) \otimes_D \phi(b') \to \phi(b\otimes_B b')$ and $\phi^0: I_D \to \phi(I_B)$ are isomorphisms. 

Actually, the list in \cite{day1973note} is larger but we are using a shorter version that is more than enough for us and it is available in \href{https://ncatlab.org/nlab/show/Day%27s+reflection+theorem}{nLab}.


The next result is Proposition 1.1 in \cite{day1973note}, with a small notation change. 

\begin{proposition}\label{prop:3.1_day_monoidal_localization}
Let $C = (C,\otimes,1)$ be a small monoidal category and $S$ the cartesian closed category of small\footnote{The smallness condition for sets and for the monoidal category is to avoid size issues.} sets and set maps. Denote by $S^C$ the functor category from $C$ to $S$. A reflective embedding $\psi \dashv \phi: D \to S^C $ admits
normal enrichment if and only if the functor $F(U\otimes -)$ is isomorphic to some object in $D$ whenever $F$ is a object of $D$ and $U$ is an object of $C$.
\end{proposition}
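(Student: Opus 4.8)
The plan is to deduce the statement from the list of equivalent conditions of Day recalled above, passing through representable functors. I will use the first of those conditions -- that for every $b\in S^C$ and every $d\in D$ the internal hom $[b,\phi d]$ lies in the essential image of $\phi$ -- and show it is equivalent to the asserted closure property ``$F(U\otimes-)$ is isomorphic to an object of $D$ whenever $F$ is an object of $D$''. All the substantive content, namely that this condition is equivalent to the existence (and uniqueness up to monoidal isomorphism) of a normal enrichment, is quoted from \cite{day1973note}, so what remains is density/continuity bookkeeping.

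First I would record the one computation needed: for the Day convolution $\star$ on $S^C$ with its internal hom, the Yoneda lemma gives $[y(U),F]\cong F(U\otimes-)$ naturally in $F$, where $y(U)$ is the representable at $U$ (in the non-symmetric case one takes the internal hom right adjoint to $y(U)\star-$). Alongside it I would record two standard facts: (a) the Yoneda embedding is dense, so every $b\in S^C$ is a canonical colimit $b\cong\operatorname{colim}_i y(U_i)$ of representables; and (b) the functor $[-,F]\colon (S^C)^{\mathrm{op}}\to S^C$ carries colimits to limits -- immediate because its value at any object is a natural-transformation set in $S^C$ and $S^C(-,X)$ turns colimits into limits.

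The two implications are then short. For ($\Rightarrow$): if $\psi\dashv\phi$ admits normal enrichment, Day's condition gives $[b,\phi d]\in\phi(D)$ for all $b,d$; taking $b=y(U)$ and using the computation, $(\phi d)(U\otimes-)\cong[y(U),\phi d]\in\phi(D)$, i.e. $F(U\otimes-)$ is isomorphic to an object of $D$ for every $F\in D$ and every $U\in C$ (identifying $D$ with its essential image under $\phi$). For ($\Leftarrow$): assume $F(U\otimes-)$ is isomorphic to an object of $D$ for all $F\in D$, $U\in C$. Given arbitrary $b\in S^C$ and $d\in D$, write $b\cong\operatorname{colim}_i y(U_i)$; then by (b), $[b,\phi d]\cong\lim_i[y(U_i),\phi d]\cong\lim_i(\phi d)(U_i\otimes-)$, a limit of objects of $D$ (each $\phi d\in D$, so the hypothesis applies). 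Since a reflective subcategory is closed under all limits that exist in the ambient category, $[b,\phi d]\in\phi(D)$; this verifies Day's condition, hence normal enrichment exists.

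The main obstacle -- more care than difficulty -- is the internal-hom bookkeeping: pinning down the variance in $[y(U),F]\cong F(U\otimes-)$ (which representable, and, when $C$ is not symmetric, which of the two internal homs is meant), and checking that the colimit-of-representables presentation of $b$ is one on which $[-,\phi d]$ genuinely acts as a limit. Once those are fixed the argument is the short chase above, with the hard equivalence imported from \cite{day1973note}.
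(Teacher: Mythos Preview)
The paper does not give its own proof of this proposition: it is simply quoted as Proposition~1.1 of \cite{day1973note}. Your proposal therefore goes further than the paper by actually supplying an argument, and the argument is correct. The key steps --- the identification $[y(U),F]\cong F(U\otimes-)$ via the adjunction defining the Day internal hom together with $y(U)\star y(V)\cong y(U\otimes V)$, the density of the Yoneda embedding, the fact that $[-,\phi d]$ turns colimits into limits (because $-\star H$ is a left adjoint), and closure of a reflective subcategory under ambient limits --- are all standard and correctly invoked, and they reduce the statement precisely to Day's condition $\eta_{[b,\phi d]}\colon [b,\phi d]\cong \phi\psi[b,\phi d]$ that the paper records just above. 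Your caveat about which of the two internal homs to use in the non-symmetric case is the only genuine point of care, and you flag it appropriately.
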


\begin{theorem}
 Suppose that $(C,\otimes,1)$ is a semicartesian monoidal category 
 \begin{itemize}
     \item If $U \otimes -$ preserves equalizers and $(C,\otimes,1)$ is symmetric, then $Sh(C,L)$ is a monoidal closed category, where $L$ is a Grothendieck prelopology.
     \item Then $Sh(C,L)$ is a monoidal closed category, where $L$ is a strong Grothendieck prelopology.
 \end{itemize}
\end{theorem}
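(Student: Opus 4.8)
The plan is to deduce both bullets from Day's criterion (Proposition~\ref{prop:3.1_day_monoidal_localization}): a reflective embedding $\psi \dashv \phi : D \to S^C$ admits normal enrichment precisely when $F(U \otimes -)$ is (isomorphic to) an object of $D$ whenever $F \in D$ and $U$ is an object of $C$. Here we take $D = Sh(C,L)$, $S^C = PSh(C)$, $\phi = i$ the inclusion, and $\psi = a$ the sheafification, which form a reflective embedding by the corollary following the $\lambda$-orthogonality proposition. So the entire theorem reduces to: if $F$ is a sheaf (for a Grothendieck prelopology $L$, resp.\ a strong Grothendieck prelopology $L$), then $F(U \otimes -)$ is again a sheaf for $L$.

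First I would handle the two cases separately by invoking the already-proved stability results. For the first bullet, assume $U \otimes -$ preserves equalizers and $(C,\otimes,1)$ is symmetric; then Proposition~\ref{prop:Fotimesisasheaf} says directly that $F(U \otimes -)$ is a sheaf for $L$ whenever $F$ is. For the second bullet, with $L$ a strong Grothendieck prelopology, Proposition~\ref{prop:Fotimesisasheafstrong} gives the same conclusion without the symmetry or equalizer-preservation hypotheses. In either case the hypothesis of Proposition~\ref{prop:3.1_day_monoidal_localization} is verified, so the reflective embedding $a \dashv i$ admits normal enrichment, and hence (by Day's uniqueness observation and the list of equivalent conditions, in particular \eqref{eq:monoidal_structure}) $Sh(C,L)$ inherits a monoidal closed structure for which $i$ is a normal closed functor and $a$ is strong monoidal.

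The final step is to package this as "$Sh(C,L)$ is a monoidal closed category". Since $PSh(C)$ carries the Day convolution monoidal closed structure (the base category $C$ being monoidal, as noted in the introduction), normal enrichment transports it: the tensor on $Sh(C,L)$ is $F \star_L G := a(iF \star iG)$, the unit is $a(y(1))$, and the internal hom is computed as in $PSh(C)$ since $i$ is normal closed, i.e.\ $[iF, iG]$ is already a sheaf. I would remark that closedness of $[iF,iG]$ as a sheaf is exactly condition~(1) in Day's list, $\eta : [b,\phi d] \cong \phi\psi[b,\phi d]$, which follows from the normal-enrichment equivalence.

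The only real obstacle is making sure the bridge to Day's proposition is legitimate — i.e.\ that $a \dashv i$ genuinely is a \emph{reflective embedding} in the sense of Definition~\ref{df:normal_enrichment} (it is, since $i$ is full and faithful, being the inclusion of a full subcategory) and that $PSh(C)$ is literally the functor category $S^C$ with $S$ the category of small sets (true by the smallness conventions, with $C$ small by the standing remark). Everything substantive has already been done in Propositions~\ref{prop:Fotimesisasheaf} and~\ref{prop:Fotimesisasheafstrong}; here the work is purely one of assembling the cited pieces, so there is essentially no computational content left to grind through.
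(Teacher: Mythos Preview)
Your proposal is correct and follows exactly the paper's approach: invoke Day's criterion (Proposition~\ref{prop:3.1_day_monoidal_localization}) together with Propositions~\ref{prop:Fotimesisasheaf} and~\ref{prop:Fotimesisasheafstrong} to verify that $F(U\otimes-)$ is a sheaf whenever $F$ is. The paper's proof is a single sentence to this effect; your version simply unpacks the intermediate steps (reflectivity of $a\dashv i$, normal enrichment, the induced tensor and internal hom) that the paper leaves implicit.
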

\begin{proof}
Considering the result from Day above, this is a consequence of Propositions \ref{prop:Fotimesisasheaf} and \ref{prop:Fotimesisasheafstrong}, respectively.
\end{proof}

Stated in other words:

\begin{proposition}
    The sheafification functor $a: PSh(\mathcal{C}) \to Sh(\mathcal{C},L)$ preserves the monoidal closed structure if $L$ is a Grothendieck prelopology and $U\otimes - $ preserves equalizers or if $L$ is a strong Grothendieck prelopology.
\end{proposition}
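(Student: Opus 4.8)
The plan is to read this off Day's characterization of normal enrichment, so that no genuinely new argument is needed beyond the sheafification-of-$F(U\otimes-)$ results already established. First I would fix the setting in which Proposition \ref{prop:3.1_day_monoidal_localization} applies: $S = Set$ is complete, cocomplete and cartesian closed, $\mathcal{C} = (\mathcal{C},\otimes,1)$ is small and monoidal (smallness holds by the running convention on domains of functors), so $PSh(\mathcal{C}) = S^{\mathcal{C}}$ carries the Day convolution monoidal closed structure $(\star,[-,-])$ with unit $y(1)$. By the corollary above, the inclusion $i : Sh(\mathcal{C},L) \to PSh(\mathcal{C})$ has a left adjoint $a$, and since $Sh(\mathcal{C},L)$ is a full subcategory of $PSh(\mathcal{C})$ the pair $a \dashv i$ is a reflective embedding in the sense of Definition \ref{df:normal_enrichment}.

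Next I would apply Proposition \ref{prop:3.1_day_monoidal_localization} verbatim: the reflective embedding $a \dashv i$ admits normal enrichment if and only if, for every object $F$ of $Sh(\mathcal{C},L)$ and every object $U$ of $\mathcal{C}$, the presheaf $F(U\otimes-)$ is isomorphic to an object of $Sh(\mathcal{C},L)$. Under the hypotheses of the first bullet -- $U\otimes-$ preserves equalizers, $(\mathcal{C},\otimes,1)$ symmetric, $L$ a Grothendieck prelopology -- this is exactly Proposition \ref{prop:Fotimesisasheaf}, which yields that $F(U\otimes-)$ is again a sheaf; under the hypotheses of the second bullet -- $L$ a strong Grothendieck prelopology -- it is Proposition \ref{prop:Fotimesisasheafstrong}. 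In either case the criterion of Proposition \ref{prop:3.1_day_monoidal_localization} is met, so the reflective embedding admits normal enrichment.

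It then remains to unpack what ``admits normal enrichment'' says. By Definition \ref{df:normal_enrichment} together with Day's list of equivalent conditions recorded above -- in particular \eqref{eq:monoidal_structure}, $\psi(\eta\otimes\eta)\colon \psi(b\otimes b')\cong\psi(\phi\psi b\otimes\phi\psi b')$ with $\psi = a$ and $\phi = i$ -- the category $Sh(\mathcal{C},L)$ inherits a monoidal closed structure, unique up to monoidal isomorphism, in which the tensor is $P'\otimes_{Sh}Q' = a(iP'\star iQ')$, the unit is $a(y(1))$, and the internal hom $[F,G]_{PSh}$ of sheaves $F,G$ is again a sheaf; moreover $a$ acquires a strong monoidal structure ($a(P\star Q)\cong a(P)\otimes_{Sh}a(Q)$ and $a(y(1))\cong I_{Sh}$) and $i$ a normal closed functor structure, with the unit and counit of $a\dashv i$ monoidal natural transformations. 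This is precisely the statement that the monoidal closed structure of $PSh(\mathcal{C})$ is transported along the sheafification $a$, which is the content of the proposition. The only substantive input is the verification that $F(U\otimes-)$ is a sheaf, i.e.\ Propositions \ref{prop:Fotimesisasheaf} and \ref{prop:Fotimesisasheafstrong}; I expect no further obstacle, only the bookkeeping of matching the two cases with the two propositions and checking that the paper's smallness conventions make Proposition \ref{prop:3.1_day_monoidal_localization} literally applicable.
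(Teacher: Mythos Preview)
Your proposal is correct and follows exactly the paper's approach: the paper treats this proposition as a restatement of the preceding theorem, whose proof is the one-line observation that Day's criterion (Proposition \ref{prop:3.1_day_monoidal_localization}) applies thanks to Propositions \ref{prop:Fotimesisasheaf} and \ref{prop:Fotimesisasheafstrong}. Your write-up is in fact more explicit than the paper's, and you were right to supply the symmetry hypothesis in the first case (needed for Proposition \ref{prop:Fotimesisasheaf}) even though the proposition as stated omits it.
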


More precisely, since the normal enrichment gives that $\Tilde{a}: a(P) \otimes_{Sh(\mathcal{C})} a(P') \to a(P\otimes_{Day} P')$ is an isomorphism for any $P, P'$ presheaves and since $F \cong a(i(F))$, for every sheaf $F$, then the tensor of sheaves is $$F \otimes_{Sh(C)}  G \cong a(i(F)) \otimes_{Sh(C)}  a(i(G)) \cong  a(i(F) \otimes_{Day} i(G)).$$ 
\begin{remark}
    Note that $PSh(C,L)$ has cartesian products. In a certain way, our sheafification is ignoring the cartesian products and preserving the monoidal products that arise from the Day convolution.   
\end{remark}

The next result holds for $Sh(C,L)$, $L$ a Grothendieck prelopology, independently if the tensor functor preserves equalizers or not.

\begin{proposition}
   The functor $a: PSh(C) \to Sh(C,L)$ preserves terminal objects.
\end{proposition}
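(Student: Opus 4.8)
The plan is to show directly that the terminal presheaf is already a sheaf, so that $a$ fixes it, and then invoke that left adjoints between categories with terminal objects need not in general preserve them — but here we get it for free. First I would recall that the terminal object $1_{PSh(C)}$ of $PSh(C)$ is the constant presheaf $W \mapsto \{*\}$, since limits in a functor category are computed pointwise and $\{*\}$ is terminal in $Set$. The claim then reduces to: the constant presheaf $\Delta_{\{*\}}$ is a sheaf for $L$, i.e. it is a local object with respect to every canonical morphism $i_{\{U_i\}} : S(\{U_i\}) \to y(U)$.

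Next I would verify the local-object condition using Definition \ref{sheaf_as_local_object}: we must check that
\[
Hom_{PSh(C)}(i_{\{U_i\}}, \Delta_{\{*\}}) : Hom_{PSh(C)}(y(U), \Delta_{\{*\}}) \longrightarrow Hom_{PSh(C)}(S(\{U_i\}), \Delta_{\{*\}})
\]
is a bijection for every $L$-cover $\{f_i : U_i \to U\}_{i \in I}$. But $\Delta_{\{*\}}$ is the terminal object of $PSh(C)$, so for \emph{any} presheaf $X$ the hom-set $Hom_{PSh(C)}(X, \Delta_{\{*\}})$ is a singleton. Hence both sides above are singletons and the map between them is trivially a bijection. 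Therefore $\Delta_{\{*\}} \in Sh(C,L)$. Equivalently, one can argue from Definition \ref{df:sheaf_on_prelopology}: the diagram $\{*\} \to \prod_i \{*\} \to \prod_{i,j}\{*\}$ is $\{*\} \to \{*\} \rightrightarrows \{*\}$, which is an equalizer in $Set$ since both parallel arrows are the identity.

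Finally, since $i : Sh(C,L) \to PSh(C)$ is a full and faithful right adjoint, a terminal object of $Sh(C,L)$ is exactly an object $T$ with $i(T)$ terminal in $PSh(C)$; we have just seen $\Delta_{\{*\}}$ is such an object, so it is the terminal object of $Sh(C,L)$ and $i$ preserves it. The adjunction isomorphism $Hom_{Sh(C,L)}(a(P), \Delta_{\{*\}}) \cong Hom_{PSh(C)}(P, i(\Delta_{\{*\}}))$ shows the right-hand side is a singleton for every presheaf $P$, so $a(1_{PSh(C)})$ satisfies the universal property of the terminal object of $Sh(C,L)$; since the unit $P \to i a(P)$ at $P = 1_{PSh(C)}$ is the unique map to a terminal object, $a(1_{PSh(C)}) \cong \Delta_{\{*\}}$, i.e. $a$ sends the terminal presheaf to the terminal sheaf. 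There is no real obstacle here: the only thing one must be slightly careful about is that ``left adjoints preserve terminal objects'' is false in general, so the argument genuinely relies on the terminal presheaf \emph{itself} being a sheaf, which is the one step worth spelling out.
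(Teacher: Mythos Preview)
Your proof is correct and follows essentially the same approach as the paper: show that the terminal presheaf is already a sheaf, then use that the inclusion is fully faithful (hence reflects limits) to conclude it is also the terminal sheaf, so that $a$ fixes it. Your version is more detailed --- you explicitly verify the sheaf condition via the local-object characterization and spell out the adjunction step, whereas the paper simply asserts that the terminal presheaf is a sheaf and invokes reflection of limits.
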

\begin{proof}
   Since the terminal presheaf is a sheaf  the fact that the inclusion of sheaves into presheaves is a fully faithful functor and thus reflects all limits imples that  the  terminal sheaf is the terminal presheaf, which is the constant presheaf with value the terminal object in $Set$.
\end{proof}

The construction used to define the sheafification also leads to other properties of $Sh(\mathcal{C},L)$ that makes it look like as a Grothendieck topos. Since $Sh(\mathcal{C},L)$ is complete (the limits are computed point-wise) and has a set of strong generators (Corollary \ref{cor:Shcocompleteandgenerators}), we have that $Sh(\mathcal{C},L)$ is well-powered \cite[Proposition 4.5.15]{borceux1994handbook}. Then, for any $F$ sheaf in $Sh(\mathcal{C},L)$, we have that the poset of subobjects $Sub(F)$ has all infima and suprema \cite[Corollary  4.2.5]{borceux1994handbook}. It is known that $Sub(F)$ is a locale when $F$ is a sheaf for a Grothendieck topology. We prove in \cite{luiza2024sheaves}, that if $F$ is a sheaf on a quantale $Q$ as in Definition \ref{df:sheaf-on-quantales} then $Sub(F)$ is isomorphic to the quantale $Q$. Therefore, if binary infima does not distributes over arbitrary suprema in $Q$, the $Sub(F)$ is not a locale and so $Sh(Q)$ is not a Grothendieck topos. Actually, is not even a topos.

If $F$ sheaf in $Sh(\mathcal{C},L)$ the next problem to solve is: $(i)$ does $Sub(F)$ have a associative binary operation $*$? $(ii)$ if so, does $*$ distributes over the arbitrary suprema in $Sub(F)$? In the following we show a path to construct $*$.

\begin{proposition}
    {\bf Factorization of morphisms in} $Sh(\mathcal{C},L)$:

For each morphism $\phi: F \to G$ in $Sh(\mathcal{C},L)$, there exists the least subobject of $G$, represented by $\iota : G' \rightarrowtail G$, such that $\phi = \iota \circ \phi'$ for some (and thus, unique) morphism $\phi' : F \to G'$. Moreover, $\phi'$ is an epimorphism.
\end{proposition}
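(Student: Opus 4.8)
## Proof Proposal

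The plan is to build the factorization by a standard image construction, but carried out inside $Sh(\mathcal{C},L)$ rather than $PSh(\mathcal{C})$, using the fact (already established) that $Sh(\mathcal{C},L)$ is complete, cocomplete, well-powered, and that $Sub(G)$ has all infima. First I would form the image of $\phi: F \to G$ in the presheaf category $PSh(\mathcal{C})$, where images exist and are computed pointwise: the presheaf image $\mathrm{Im}(\phi)$ is the subpresheaf of $G$ whose value at each object is the set-theoretic image of $\phi_U : F(U) \to G(U)$, and $\phi$ factors as an epi $F \twoheadrightarrow \mathrm{Im}(\phi)$ followed by a mono $\mathrm{Im}(\phi) \rightarrowtail G$. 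The subtlety is that $\mathrm{Im}(\phi)$ need not be a sheaf, so this is not yet the desired factorization in $Sh(\mathcal{C},L)$.

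Next I would intersect: let $G'$ be the infimum, taken in $Sub(G)$ computed in $Sh(\mathcal{C},L)$, of all subsheaves of $G$ through which $\phi$ factors. Such subsheaves exist — e.g. $G$ itself — and the family is a set because $Sh(\mathcal{C},L)$ is well-powered; the infimum exists by the remarks preceding the statement (complete plus a set of strong generators gives well-poweredness, hence $Sub(G)$ is a complete lattice, \cite[Corollary 4.2.5]{borceux1994handbook}). Since limits in $Sh(\mathcal{C},L)$ are computed pointwise and agree with limits in $PSh(\mathcal{C})$, this infimum is the intersection of the corresponding subpresheaves, which certainly contains $\mathrm{Im}(\phi)$; therefore $\phi$ still factors through $\iota : G' \rightarrowtail G$ via a (necessarily unique, as $\iota$ is monic) morphism $\phi' : F \to G'$. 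By construction $G'$ is the least subobject of $G$ with this property.

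The main obstacle — and the only step requiring real work — is showing $\phi'$ is an epimorphism \emph{in} $Sh(\mathcal{C},L)$. Epis in a reflective subcategory of $PSh(\mathcal{C})$ are \emph{not} the pointwise surjections; the correct statement is that $\phi'$ is epi in $Sh(\mathcal{C},L)$ iff the sheafification $a$ applied to the presheaf image of $\phi'$ is all of $G'$. Here is where I would use minimality: suppose $\phi'$ factors as $F \to G'' \rightarrowtail G'$ with $G''$ a subsheaf of $G'$; composing with $\iota$ exhibits $\phi$ as factoring through the subsheaf $G'' \rightarrowtail G$, so by the least-ness of $G'$ we get $G' \subseteq G''$, whence $G'' = G'$. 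Thus $\phi'$ does not factor through any proper subsheaf of its codomain. To convert ``no proper subsheaf factorization'' into ``epimorphism'', I would take a parallel pair $u,v : G' \rightrightarrows H$ in $Sh(\mathcal{C},L)$ with $u\phi' = v\phi'$, form their equalizer $E \rightarrowtail G'$ in $Sh(\mathcal{C},L)$ (which exists since the category is complete, and is a subsheaf of $G'$ since equalizers of monos-into-a-subobject behave well — more precisely $E \to G'$ is monic as an equalizer), observe $\phi'$ factors through $E$, conclude $E = G'$ by minimality, and hence $u = v$. This gives $\phi'$ epi, completing the proof; I would also note uniqueness of $\phi'$ is immediate from $\iota$ being monic.
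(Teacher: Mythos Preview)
Your proposal is correct and follows essentially the same route as the paper: construct $G'$ as the least subobject of $G$ through which $\phi$ factors (using well-poweredness and completeness of $Sub(G)$), then prove $\phi'$ is epi by taking a parallel pair $u,v$ with $u\phi'=v\phi'$, forming their equalizer $E\rightarrowtail G'$, factoring $\phi'$ through $E$, and invoking minimality to force $E=G'$. The detour through the pointwise presheaf image is not needed---one can argue directly that $\phi$ factors through the infimum because it factors through each subobject in the family and the infimum is a limit---but it does no harm.
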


\begin{proof}
By the previous discussion, there exists the  extremal factorization above $\phi = \iota \circ \phi'$, such that $\iota : G' \rightarrowtail G$ is a mono. To show that $\phi' : F \to G'$ is an epi, consider $\eta, \epsilon: G' \to H$ such that $\eta \circ \phi' =  \epsilon \circ \phi'$ and let $\gamma  : H' \rightarrowtail G'$ be the equalizer of $\eta, \epsilon$. Then, by the universal property of $\gamma$, there exists a unique $\phi'' : F \to H'$ such that $\gamma \circ \phi'' = \phi'$. On the other hand, by the extremality of $\iota$, there exists a unique $\gamma ' : G' \to H'$ such that $\iota = \iota \circ \gamma \circ \gamma'$. Since $\iota$ is a mono, we obtain that $\gamma \circ \gamma' = id_{G'}$. Thus $\gamma$ is a mono that is a retraction. This means that $\gamma = eq(\eta, \epsilon)$ is an iso, i.e., $\epsilon = \eta$. Then $\phi'$ is an epi.
\end{proof}
\begin{remark}
    The above proof is precisely the same we used to prove the same result but for $Sh(Q)$ in \cite{luiza2024sheaves}. The next definition was also used in that same paper.
\end{remark}

\begin{definition}
    For each $F$ sheaf on $Q$, we define the following binary operation on $Sub(F)$: Given $\phi_i : F_i \rightarrowtail F$, $i =0,1$ define $\phi_0 * \phi_1 : F_0 * F_1  \rightarrowtail F$ as the mono in the extremal factorization of $F_0 \tensor[_{\phi_0}]{\otimes}{_{\phi_1}} F_1 \rightarrowtail F_0 \otimes F_1 \rightrightarrows F$.
\end{definition}

In the quantalic case this construction permitted us to show that $Sub(F)$ is indeed a quantale. Unfortunately, the argument used in \cite{luiza2024sheaves} is not easily translated to the present more general scenario. We leave further investigations regarding this for a future work.

\section{Towards Grothendieck loposes and conclusions}
Given our definition of a sheaf for a Grothendieck prelopology, the reader may expect a definition of a Grothendieck lopology so that we define sheaves for it and then define a Grothendieck lopos as a category that is equivalent to a category of sheaves for a Grothendieck lopology. However, in general, the canonical arrow $i_{\{U_i\}} : S(\{U_i\}) \to y(U)$ is not a monomorphism, when $\{f_i : U_i \to U\}$ is an $L$-cover of $U$. Thus, a sieve on $U$ is not necessarily a subfunctor of the Yoneda embedding $y(U)$, which is always the case in the context of Grothendieck pretopologies. 

Fortunately, there are many alternative characterizations of Grothendieck toposes. Here we recall that Grothendieck toposes are categorizations of locales. One result to see this point of view states that $\mathcal{C}$ is a Grothendieck topos if and only if $\mathcal{C}$ is lex total and has a small set of generators \cite{street1981notions}, where lex total means that the Yoneda $y: \mathcal{C} \to Set^{\mathcal{C}^{op}}$ has a left adjoint functor that preserves all finite limits. This result is the categorization of the fact that a poset $P$ is locale if and only if the Yoneda  $y: P \to 2^{P^{op}}$ ($2 = \{0\leq 1\}$) has a left adjoint that preserves finite infima. Indeed: given a poset $P$, note that the category $2^{P^{op}}$ consists of ordered functions $P^{op} \to 2$ where the inverse image of $1$ gives a down-set $D$ of $P$, that is, if $a\leq b$ and $b \in D$, then $a \in D$. Thus, the Yoneda $y(a)$ corresponds to the down-set $\downarrow a = \{x \in P \,:\, x \leq a\}$. We also have that $y: P \to 2^{P^{op}}$ has left adjoint if and only if $P$ has arbitrary sups. Indeed, the left adjoint is $\bigvee : 2^{P^{op}} \to P$, which takes a down-set $A$ and send it to $\bigvee A$. The adjoint condition $A \subseteq \downarrow a \iff \bigvee A \leq a$ holds by definition of supremum. Now, it should be clear that

\begin{proposition}
    Suppose that $P$ has arbitrary sups and a binary associative operation $\odot : P \times P \to P$. Then $P$ is a quantale if and only if $\bigvee : 2^{P^{op}} \to P$ preserves $\odot$.
\end{proposition}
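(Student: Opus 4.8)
\emph{Proof sketch.} The first step is to pin down the operation on $2^{P^{op}}$ that ``$\bigvee$ preserves'': it is the Day convolution induced by $\odot$, which on down-sets $A,B$ of $P$ reads
\[ A \odot B \;=\; \{\, p \in P \;:\; \exists\, a \in A,\ \exists\, b \in B,\ p \le a \odot b \,\} \;=\; {\downarrow}\{\, a\odot b : a\in A,\ b\in B\,\}. \]
This is again a down-set (no monotonicity of $\odot$ is needed for that), so ``$\bigvee$ preserves $\odot$'' means exactly that $\bigvee(A\odot B) = (\bigvee A)\odot(\bigvee B)$ for all down-sets $A,B$. I will repeatedly use the elementary identity $\bigvee({\downarrow}S)=\bigvee S$, valid because ${\downarrow}S$ and $S$ have the same upper bounds; in particular $\bigvee(A\odot B)=\bigvee\{\,a\odot b : a\in A,\ b\in B\,\}$.

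For the ``only if'' direction, suppose $P$ is a quantale. Then for down-sets $A,B$,
\[ \bigvee(A\odot B)=\bigvee_{a\in A}\bigvee_{b\in B}(a\odot b)=\bigvee_{a\in A}\Bigl(a\odot\bigvee_{b\in B}b\Bigr)=\Bigl(\bigvee_{a\in A}a\Bigr)\odot\Bigl(\bigvee_{b\in B}b\Bigr)=(\bigvee A)\odot(\bigvee B), \]
the second equality being the left distributive law of the quantale and the third the right one. So this implication is a short computation.

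The ``if'' direction is the substantive one, and I expect its main obstacle to be that monotonicity of $\odot$ is \emph{not} among the hypotheses, so it must be extracted first. Assume $\bigvee(A\odot B)=(\bigvee A)\odot(\bigvee B)$ for all down-sets $A,B$. Given $a\le a'$ and $b\le b'$, apply the hypothesis to the principal down-sets $A={\downarrow}a'$ and $B={\downarrow}b'$: since $a\odot b\in A\odot B$ we get $a\odot b\le\bigvee(A\odot B)=a'\odot b'$, which (taking $b=b'$, resp. $a=a'$) is exactly separate monotonicity of $\odot$.

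With monotonicity available I recover the two distributive laws. Fix $a\in P$ and a family $\{b_i\}_{i\in I}\subseteq P$, and apply the hypothesis to $A={\downarrow}a$ and $B=\bigcup_{i\in I}{\downarrow}b_i$, the down-set generated by the $b_i$; here $\bigvee A=a$ and $\bigvee B=\bigvee_{i}b_i$. By monotonicity every generator $a''\odot b''$ of $A\odot B$ (with $a''\le a$ and $b''\le b_i$ for some $i$) satisfies $a''\odot b''\le a\odot b_i$, while each $a\odot b_i$ itself lies in $A\odot B$; hence $\bigvee(A\odot B)=\bigvee_i(a\odot b_i)$, and the hypothesis gives $a\odot\bigvee_i b_i=\bigvee_i(a\odot b_i)$, the first quantale law. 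The symmetric choice $A=\bigcup_i{\downarrow}b_i$, $B={\downarrow}a$ yields $\bigl(\bigvee_i b_i\bigr)\odot a=\bigvee_i(b_i\odot a)$, the second law, so $P$ is a quantale and the plan is complete.
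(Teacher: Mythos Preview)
Your proof is correct. The paper does not actually supply a proof of this proposition---it merely prefaces it with ``it should be clear that''---so there is no argument to compare against; your write-up fills in exactly the details one would expect, including the identification of the Day convolution on down-sets and the observation that monotonicity of $\odot$ must be extracted before the distributive laws can be recovered in the ``if'' direction.
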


In other words, a poset $P$ with an associative binary operation $\odot$ is a quantale if and only if the Yoneda  $y: P \to 2^{P^{op}}$ has a left adjoint that preserves $\odot$.
In the localic case,  $\bigvee : 2^{P^{op}} \to P$ preserves $\wedge$ categorifies to the left adjoint of the Yoneda preserves all finite limits because in a locale the meet is both the product and the pullback in the category. In the quantalic case, the multiplication is both the monoidal product and the pseudo-pullback. Therefore, we suggest that a category  $\mathcal{C}$ should be a Grothendieck lopos if and only if $\mathcal{C}$ has a small set of generators and $y: \mathcal{C} \to Set^{\mathcal{C}^{op}}$ has a left adjoint monoidal functor that preserves pseudo-pullback, where saying that a functor $\Lambda$ preserves pseudo-pullback means that if $(A \otimes_C B)$ is a pseudo-pullback then $F(A \otimes_C B)$ is isomorphic to the pseudo-pullback $F(A) \otimes_{F(C)}' F(B)$, using the respective tensors in the considered categories.

This approach highlights that we aim to a categorization of quantales in the same way that Grothendieck toposes are a  categorization of locales. We are aware of other characterization of Grothendieck toposes that we will try to replicate in future works, the most famous one being the Giraud Theorem.

Finally, we say that the axioms for Grothendieck prelopologies (including the weak and the strong versions) may be improved. It is clear that the third axiom is crucial to obtain that $Sh(C,L)$ is a closed monoidal category but it alone is not enough, that is why we add the fifty axiom (or, alternatively, restrict or attention to symmetric categories $C$ and ask $U \otimes - $ to preserve equalizers). The role of the fourth axiom is to provide that a sheaf for a Grothendieck prelopology is exactly the a sheaf for a Grothendieck pretopology if the monoidal structure in $C$ is cartesian. We want this to keep the analogy with the posetal case: our sheaves on quantales are precisely sheaves on locales if the multiplication is the meet operation. However, we believe that the sheafification should preserve more structure -- equalizers or at least pseudo-pullbacks -- and such property must come from the axioms of the prelopology. Checking this will be the goal of future works.

\section{Acknowledgments}
This paper contains a significant part of the thesis of Ana Luiza Tenório, which was funded by Coordenação de Aperfeiçoamento de Pessoal de Nível Superior (CAPES). Grant Number 88882.377949/2019-01.

\section{Appendix: The monoidal projection's behavior}\label{secA1}

Here we need to be careful with the notation of our projections. We use $\pi^1_{Y,Z} = \rho_Y \circ (id_Y \otimes !_Z)$ and $\pi^2_{Y,Z} = \lambda_Z \circ (!_Y \otimes id_Z)$ to denote, respectively, the projections 
\begin{center}
 $\pi^1_{Y,Z}:  Y \otimes Z \to Y$ and $\pi^2_{Y,Z}:  Y \otimes Z \to Z$,   
\end{center}
for any objects $Y,Z$. We want to show the following:

\begin{proposition}\label{prop:equalizes}
    Let $a_{X,A,B}: (X\otimes A)\otimes B \to X \otimes (A \otimes B)$ be the associator, and $e : (X \otimes A) \tensor[_{\phi}]{\otimes}{_{\psi}} (X \otimes B) \to (X \otimes A) \otimes (X \otimes B)$ the equalizer in the pseudo-pullback of $\phi = (id_X \otimes f) \circ \pi^1_{X\otimes A, X\otimes B}$ and $\psi =  (id_X \otimes g) \circ \pi^2_{X\otimes A, X\otimes B}$, where $f: A \to C$ and $g: B \to C$. Then the composition $a_{X,A,B} \circ (id_{X\otimes A} \otimes \pi^2_{X,B})\circ e$ equalizes the pair $(id_X \otimes f) \circ (id_X \otimes \pi^1_{A,B})$ and  $(id_X \otimes g) \circ (id_X \otimes \pi^2_{A,B})$.
\end{proposition}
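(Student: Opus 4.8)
The plan is to split the verification into a part governed entirely by Mac Lane coherence, which is routine, and a single delicate identity which is the real content. Write $E$ for the domain of $e$ and abbreviate $\Theta := a_{X,A,B}\circ(id_{X\otimes A}\otimes\pi^2_{X,B})\circ e$ for the composite in the statement; the goal is $(id_X\otimes f)\circ(id_X\otimes\pi^1_{A,B})\circ\Theta=(id_X\otimes g)\circ(id_X\otimes\pi^2_{A,B})\circ\Theta$.

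First I would simplify $\Theta$ postcomposed with each of the two projections, before $e$ enters. Naturality of the associator together with the coherence identities $(id_X\otimes\rho_A)\circ a_{X,A,1}=\rho_{X\otimes A}$ and $(id_X\otimes\lambda_B)\circ a_{X,1,B}=\rho_X\otimes id_B$ give, on the nose, $(id_X\otimes\pi^1_{A,B})\circ a_{X,A,B}=\pi^1_{X\otimes A,B}$ and $(id_X\otimes\pi^2_{A,B})\circ a_{X,A,B}=\pi^1_{X,A}\otimes id_B$. Postcomposing with $id_{X\otimes A}\otimes\pi^2_{X,B}$ and using naturality of $\pi^1$ and bifunctoriality of $\otimes$, this gives $(id_X\otimes\pi^1_{A,B})\circ a_{X,A,B}\circ(id_{X\otimes A}\otimes\pi^2_{X,B})=\pi^1_{X\otimes A,X\otimes B}$ and $(id_X\otimes\pi^2_{A,B})\circ a_{X,A,B}\circ(id_{X\otimes A}\otimes\pi^2_{X,B})=\pi^1_{X,A}\otimes\pi^2_{X,B}$. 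Hence $(id_X\otimes f)\circ(id_X\otimes\pi^1_{A,B})\circ\Theta=(id_X\otimes f)\circ\pi^1_{X\otimes A,X\otimes B}\circ e=\phi\circ e$ and $(id_X\otimes g)\circ(id_X\otimes\pi^2_{A,B})\circ\Theta=(id_X\otimes g)\circ(\pi^1_{X,A}\otimes\pi^2_{X,B})\circ e$.

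Next I would use that $e$ equalizes $\phi$ and $\psi$. On one hand $\phi\circ e=\psi\circ e=(id_X\otimes g)\circ\pi^2_{X\otimes A,X\otimes B}\circ e$, so the whole statement reduces to the single identity $(\pi^1_{X,A}\otimes\pi^2_{X,B})\circ e=\pi^2_{X\otimes A,X\otimes B}\circ e$. On the other hand, postcomposing $\phi\circ e=\psi\circ e$ with $\pi^1_{X,C}$ and using $\pi^1_{X,C}\circ(id_X\otimes f)=\pi^1_{X,A}$ and $\pi^1_{X,C}\circ(id_X\otimes g)=\pi^1_{X,B}$, one obtains that the two ``$X$-legs'' of $e$ coincide: $\pi^1_{X,A}\circ\pi^1_{X\otimes A,X\otimes B}\circ e=\pi^1_{X,B}\circ\pi^2_{X\otimes A,X\otimes B}\circ e$.

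The identity $(\pi^1_{X,A}\otimes\pi^2_{X,B})\circ e=\pi^2_{X\otimes A,X\otimes B}\circ e$ is where the preliminary lemmas on the semicartesian projections are used, and is the main obstacle. Both sides have the same composite with $\pi^2_{X,B}$ (naturality of $\pi^2$) and, by the previous step, the same composite with $\pi^1_{X,B}$; in the cartesian case this already finishes the proof, but since $X\otimes B$ need not be the categorical product — e.g.\ in $Top_{\{*\}}$ with the smash product the projections are constant maps — agreement on the two legs is not enough, and one cannot conclude directly. Instead one writes $\pi^1_{X,A}\otimes\pi^2_{X,B}=(id_X\otimes\pi^2_{X,B})\circ(\pi^1_{X,A}\otimes id_{X\otimes B})$, re-expresses $\pi^2_{X\otimes A,X\otimes B}$ via associators, the symmetry and the projections, and then invokes the small lemmas proved just before this proposition, together with the coincidence of the two $X$-legs along $e$, to identify both composites with one and the same canonical morphism $E\to X\otimes B$; this is precisely the step forcing the symmetry hypothesis. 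Assembling the three steps yields $(id_X\otimes f)\circ(id_X\otimes\pi^1_{A,B})\circ\Theta=\phi\circ e=\psi\circ e=(id_X\otimes g)\circ(id_X\otimes\pi^2_{A,B})\circ\Theta$, which is the assertion.
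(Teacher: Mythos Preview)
Your reductions in the first two paragraphs are correct and match the paper: the identity $(id_X\otimes\pi^1_{A,B})\circ a_{X,A,B}\circ(id_{X\otimes A}\otimes\pi^2_{X,B})=\pi^1_{X\otimes A,X\otimes B}$ is exactly the first half of Lemma~\ref{lem:factoringprojections}, and after invoking $\phi\circ e=\psi\circ e$ the problem is indeed the comparison of $\pi^2_{X\otimes A,X\otimes B}\circ e$ with $(\pi^1_{X,A}\otimes\pi^2_{X,B})\circ e$.

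The gap is your last paragraph, which is only a gesture. Two points. First, the paper takes a different line at this stage: it does \emph{not} use your observation that the two $X$-legs of $e$ coincide. Instead it argues that
\[
\pi^2_{A,X\otimes B}\circ(\pi^1_{A,X}\otimes id_{X\otimes B})\circ(b_{A,X}\otimes id_{X\otimes B})^{-1}
=(id_X\otimes\pi^2_{A,B})\circ a_{X,A,B}\circ(id_{X\otimes A}\otimes\pi^2_{X,B})
\]
already as an equality of morphisms $(X\otimes A)\otimes(X\otimes B)\to X\otimes B$, \emph{before} composing with $e$, via an explicit chain: rewrite $(id_X\otimes\pi^2_{A,B})\circ a_{X,A,B}$ as $\pi^1_{X,A}\otimes id_B$ using diagram~(\ref{diagram:p1andp2}); replace $\pi^1_{X,A}$ by $\pi^2_{A,X}\circ b_{X,A}$ via Lemma~\ref{lem:projectionsandbraiding}; then apply diagrams~(\ref{diagram:p2}) and~(\ref{diagram:p1andp2}) together with naturality of the associator to push the braiding through, finishing with the symmetry $b_{X,A}=b_{A,X}^{-1}$. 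Second, your own alternative route is not actually carried out: you write that one ``re-expresses $\pi^2_{X\otimes A,X\otimes B}$ via associators, the symmetry and the projections'' and ``invokes the small lemmas\dots\ together with the coincidence of the two $X$-legs'', but you produce no step of this chain, and you do not explain concretely how the coincidence of the $X$-legs is consumed --- having yourself already (correctly) observed that agreement after $\pi^1_{X,B}$ and $\pi^2_{X,B}$ does not determine a morphism into $X\otimes B$. As written this is not a proof; you need to exhibit the chain of rewrites explicitly.
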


This result may be expected but it does requires a verification since our projections are not as well-behaved as the projections of the categorical product. First, we will check minor lemmas regarding those projections. In particular, how they interact with the braiding in a symmetric monoidal category.

We start recalling a proposition from \cite{etingof2016tensor}.

\begin{proposition} \label{prop:consequence2.2.4} (Propostion 2.2.4 \cite{etingof2016tensor})
    The following diagrams commute for all objects $A, B$ in a monoidal category.
\end{proposition}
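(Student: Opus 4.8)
The statement reproduces the two unit-coherence triangles of a monoidal category. Writing $\lambda$ and $\rho$ for the left and right unitors and $a$ for the associator (as in the rest of the paper), the two diagrams assert
\[
\lambda_{A\otimes B}\circ a_{1,A,B}=\lambda_A\otimes \mathrm{id}_B
\qquad\text{and}\qquad
(\mathrm{id}_A\otimes \rho_B)\circ a_{A,B,1}=\rho_{A\otimes B}
\]
for all objects $A,B$. Both are purely formal consequences of the pentagon and triangle axioms, so the plan is to prove the first identity and then obtain the second by the evident duality: applying the first identity in the reversed monoidal category $\mathcal{C}^{\mathrm{rev}}$ (where $X\otimes^{\mathrm{rev}}Y=Y\otimes X$) interchanges $\lambda\leftrightarrow\rho$ and swaps the two triangles, so no separate computation is needed.

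First I would record the tools: the naturality of $a$, $\lambda$, $\rho$; the pentagon axiom
\[
a_{W,X,Y\otimes Z}\circ a_{W\otimes X,Y,Z}=(\mathrm{id}_W\otimes a_{X,Y,Z})\circ a_{W,X\otimes Y,Z}\circ(a_{W,X,Y}\otimes \mathrm{id}_Z);
\]
and the triangle axiom $(\mathrm{id}_X\otimes \lambda_Y)\circ a_{X,1,Y}=\rho_X\otimes \mathrm{id}_Y$. The structural observation that makes the argument run is that $1\otimes(-)$ is an equivalence — indeed $\lambda$ is a natural isomorphism $1\otimes(-)\cong\mathrm{id}_{\mathcal{C}}$ — hence faithful; so it suffices to prove the first identity after applying $1\otimes(-)$. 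This is the key move, because it is precisely what brings the pentagon instance $(W,X,Y,Z)=(1,1,A,B)$ into play, and that instance supplies the only nonformal coincidence in the whole derivation.

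The heart of the computation is then a rewriting of the two images. On the left, $1\otimes(-)$ sends the composite to $(\mathrm{id}_1\otimes\lambda_{A\otimes B})\circ(\mathrm{id}_1\otimes a_{1,A,B})$; here I would rewrite $\mathrm{id}_1\otimes\lambda_{A\otimes B}$ via the triangle axiom and rewrite $\mathrm{id}_1\otimes a_{1,A,B}$ via the pentagon instance above. On the right, $1\otimes(-)$ sends $\lambda_A\otimes\mathrm{id}_B$ to $\mathrm{id}_1\otimes(\lambda_A\otimes\mathrm{id}_B)$, which I would expand using naturality of $a_{1,-,B}$ along $\lambda_A$ and then the triangle axiom applied to $\mathrm{id}_1\otimes\lambda_A$. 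Both sides collapse to the same normal form, built from $\rho_1\otimes\mathrm{id}$ and associators indexed by $1,1,A,B$; since $1\otimes(-)$ is faithful, the original identity follows, and the second diagram is the $\mathcal{C}^{\mathrm{rev}}$-image of the first.

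The main obstacle is bookkeeping rather than conceptual: each reduction produces strings of associators and their inverses, and one must orient the single pentagon instance correctly and check that the residual associator strings on the two sides agree on the nose (a place where the preliminary identity $\lambda_1=\rho_1$ is convenient). A conceptually cleaner route, which I would cite as the reason the result must hold, is Mac Lane's coherence theorem: both composites are arrows between iterated tensor products assembled only from $a$, $\lambda$, $\rho$ and identities, so coherence forces them to coincide; the explicit pentagon/triangle derivation above is then a self-contained account of that general fact, and it is essentially the proof given in \cite{etingof2016tensor}.
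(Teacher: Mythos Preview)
Your sketch is correct: the two unit triangles follow from the pentagon instance $(1,1,A,B)$ together with the triangle axiom, using faithfulness of $1\otimes(-)$ to reduce to an equality after tensoring with $1$; and the second triangle is the $\mathcal{C}^{\mathrm{rev}}$-image of the first. This is exactly the Kelly-style derivation carried out in the cited reference \cite{etingof2016tensor}. Note, however, that the present paper does not supply its own proof of this proposition---it simply quotes the result and defers to \cite{etingof2016tensor}---so there is nothing in the paper to compare against beyond the citation you already acknowledge.
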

\begin{enumerate}
    \item 
\[\begin{tikzcd}[ampersand replacement=\&]
	{(1 \otimes A) \otimes B} \&\& {1 \otimes (A \otimes B)} \\
	\& {A\otimes B}
	\arrow["{a_{1,A,B}}", from=1-1, to=1-3]
	\arrow["{\lambda_A \otimes id_B}"', from=1-1, to=2-2]
	\arrow["{\lambda_{A\otimes B}}", from=1-3, to=2-2]
\end{tikzcd}\]
    \item 
\[\begin{tikzcd}[ampersand replacement=\&]
	{(A \otimes B) \otimes 1} \&\& {A \otimes (B \otimes 1)} \\
	\& {A\otimes B}
	\arrow["{a_{A,B,1}}", from=1-1, to=1-3]
	\arrow["{\rho_{A\otimes B}}"', from=1-1, to=2-2]
	\arrow["{id_A \otimes \rho_B}", from=1-3, to=2-2]
\end{tikzcd}\]
\end{enumerate}

From this is immediate that 
\begin{lemma}\label{lem:factoringminorprojections}
The following diagrams commute for all objects $A, B, X$ in a semicartesian monoidal category:
\begin{align}
\begin{tikzcd}[ampersand replacement=\&]\label{diagram:p2}
	{(X \otimes A) \otimes B} \&\& {X \otimes (A \otimes B)} \\
	\& {A\otimes B}
	\arrow["{a_{X,A,B}}", from=1-1, to=1-3]
	\arrow["{\pi^2_{X,A}\otimes id_B}"', from=1-1, to=2-2]
	\arrow["{\pi^2_{X,A\otimes B}}", from=1-3, to=2-2]
\end{tikzcd}
\end{align}
\begin{align}\label{diagram:p1}
\begin{tikzcd}[ampersand replacement=\&]
	{(A \otimes B) \otimes X} \&\& {A \otimes (B \otimes X)} \\
	\& {A\otimes B}
	\arrow["{a_{A,B,X}}", from=1-1, to=1-3]
	\arrow["{\pi^1_{A\otimes B,X}}"', from=1-1, to=2-2]
	\arrow["{id_A \otimes\pi^1_{B,X}}", from=1-3, to=2-2]
\end{tikzcd}
\end{align}
\end{lemma}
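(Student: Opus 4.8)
The plan is to treat this as a routine reduction to Proposition \ref{prop:consequence2.2.4} via three standard ingredients: the definition of the monoidal projections, naturality of the associator, and the interchange (bifunctoriality) law for $\otimes$. The semicartesian hypothesis is used only to guarantee that the canonical morphisms $!_X : X \to 1$ exist, so that $\pi^1$ and $\pi^2$ are defined at all; beyond that, $!_X$ is handled as an ordinary morphism. First I would unfold the definitions $\pi^2_{Y,Z} = \lambda_Z \circ (!_Y \otimes id_Z)$ and $\pi^1_{Y,Z} = \rho_Y \circ (id_Y \otimes !_Z)$ and rewrite both sides of each triangle so that the only nontrivial coherence data appearing are a unitor and one associator; at that point the relevant triangle of Proposition \ref{prop:consequence2.2.4} collapses the composite.

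For diagram \eqref{diagram:p2}, I would expand $\pi^2_{X,A\otimes B}\circ a_{X,A,B} = \lambda_{A\otimes B}\circ(!_X\otimes id_{A\otimes B})\circ a_{X,A,B}$. The key step is to invoke naturality of the associator with respect to $!_X : X \to 1$ in the first slot (identities in the other two), giving $(!_X\otimes id_{A\otimes B})\circ a_{X,A,B} = a_{1,A,B}\circ((!_X\otimes id_A)\otimes id_B)$. Substituting and applying Proposition \ref{prop:consequence2.2.4}(1), namely $\lambda_{A\otimes B}\circ a_{1,A,B} = \lambda_A\otimes id_B$, yields $(\lambda_A\otimes id_B)\circ((!_X\otimes id_A)\otimes id_B)$. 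A final application of bifunctoriality repackages this as $(\lambda_A\circ(!_X\otimes id_A))\otimes id_B = \pi^2_{X,A}\otimes id_B$, which is exactly the other leg of \eqref{diagram:p2}.

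Diagram \eqref{diagram:p1} is entirely dual and I would treat it symmetrically, using the right unitor in place of the left one. Writing $id_A\otimes\pi^1_{B,X} = (id_A\otimes\rho_B)\circ(id_A\otimes(id_B\otimes !_X))$ by bifunctoriality, I would commute the associator past $!_X$ in the third slot via its naturality to get $(id_A\otimes(id_B\otimes !_X))\circ a_{A,B,X} = a_{A,B,1}\circ(id_{A\otimes B}\otimes !_X)$, then apply Proposition \ref{prop:consequence2.2.4}(2), $(id_A\otimes\rho_B)\circ a_{A,B,1} = \rho_{A\otimes B}$, to obtain $\rho_{A\otimes B}\circ(id_{A\otimes B}\otimes !_X) = \pi^1_{A\otimes B,X}$. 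The only genuine subtlety—and the step I would flag as the place to be careful rather than a deep obstacle—is the bookkeeping in the naturality square of the associator: one must verify that the tensor factors line up so that the interchange law recombines the unitor with the terminal map into precisely the claimed projection, rather than a component in a permuted slot.
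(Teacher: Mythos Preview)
Your proposal is correct and follows essentially the same approach as the paper: both arguments factor the projection through $(!_X\otimes id_A)\otimes id_B$ (resp.\ $id_{A\otimes B}\otimes !_X$), invoke naturality of the associator to produce the square with vertex $(1\otimes A)\otimes B$ (resp.\ $(A\otimes B)\otimes 1$), and then apply Proposition~\ref{prop:consequence2.2.4} to collapse the remaining unitor triangle. The only difference is presentational---the paper draws the commuting pentagon as a diagram while you write out the same equalities as a chain of composites.
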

\begin{proof}
    The fisrt diagram commutes because in 
\[\begin{tikzcd}[ampersand replacement=\&]
	{(X \otimes A) \otimes B} \&\& {X \otimes (A \otimes B)} \\
	{(1\otimes A)\otimes B} \&\& {1 \otimes (A \otimes B)} \\
	\& {A\otimes B}
	\arrow["{a_{X,A,B}}", from=1-1, to=1-3]
	\arrow["{(!_X \otimes id_A)\otimes id_B}"', from=1-1, to=2-1]
	\arrow["{!_X \otimes id_{A\otimes B}}", from=1-3, to=2-3]
	\arrow["{\lambda_A \otimes id_B}"', from=2-1, to=3-2]
	\arrow["{\lambda_{A\otimes B}}", from=2-3, to=3-2]
	\arrow["{a_{1,A,B}}", from=2-1, to=2-3]
\end{tikzcd}\]

we have that the square commutes because of the naturality of the associator and the triangle commutes because of the above proposition.
Analogously, the diagram $A2$ commutes.
\end{proof}

While those diagrams shows a way of deleting objects on the extremes (the first or the last component), we can also delete objects in the middle: the definition of a monoidal category gives that 
\[\begin{tikzcd}[ampersand replacement=\&]
	{(A\otimes 1)\otimes B} \&\& {A \otimes (1 \otimes B)} \\
	\& {A\otimes B}
	\arrow["{\rho_A \otimes id_B}"', from=1-1, to=2-2]
	\arrow["{id_A \otimes \lambda_{B}}", from=1-3, to=2-2]
	\arrow["{a_{A,1,B}}", from=1-1, to=1-3]
\end{tikzcd}\]
commutes. Using the same reasoning as before, we obtain the following: 

\begin{lemma}
The following diagram commutes for all objects $A, B, X$ in a semicartesian monoidal category:
\begin{align}\label{diagram:p1andp2}
\begin{tikzcd}[ampersand replacement=\&]
	{(A\otimes X)\otimes B} \&\& {A \otimes (X \otimes B)} \\
	\& {A\otimes B}
	\arrow["{\pi^1_{A,X}\otimes id_B}"', from=1-1, to=2-2]
	\arrow["{id_A \otimes \pi^2_{X,B}}", from=1-3, to=2-2]
	\arrow["{a_{A,X,B}}", from=1-1, to=1-3]
\end{tikzcd}
\end{align}
\end{lemma}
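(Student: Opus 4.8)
The plan is to reduce the claimed commuting triangle to the previous two lemmas by the same template that produced Lemma \ref{lem:factoringminorprojections} and its companion, namely: replace the leftmost/rightmost factor by the unit object via the canonical map $!_{(-)}$, then invoke naturality of the associator together with the relevant coherence triangle from Proposition \ref{prop:consequence2.2.4}. Concretely, I would consider the diagram
\[\begin{tikzcd}[ampersand replacement=\&]
	{(A\otimes X)\otimes B} \&\& {A\otimes(X\otimes B)} \\
	{(A\otimes 1)\otimes B} \&\& {A\otimes(1\otimes B)} \\
	\& {A\otimes B}
	\arrow["{a_{A,X,B}}", from=1-1, to=1-3]
	\arrow["{(id_A\otimes !_X)\otimes id_B}"', from=1-1, to=2-1]
	\arrow["{id_A\otimes(!_X\otimes id_B)}", from=1-3, to=2-3]
	\arrow["{a_{A,1,B}}", from=2-1, to=2-3]
	\arrow["{\rho_A\otimes id_B}"', from=2-1, to=3-2]
	\arrow["{id_A\otimes\lambda_B}", from=2-3, to=3-2]
\end{tikzcd}\]
and argue that the top square commutes by naturality of $a$ applied to the arrow $!_X : X \to 1$, while the bottom triangle is exactly the coherence triangle for a monoidal category recalled just above the statement. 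Composing, the outer diagram commutes; unwinding the definitions $\pi^1_{A,X} = \rho_A\circ(id_A\otimes !_X)$ and $\pi^2_{X,B} = \lambda_B\circ(!_X\otimes id_B)$ then identifies the left leg with $\pi^1_{A,X}\otimes id_B$ and the right leg with $id_A\otimes\pi^2_{X,B}$, which is precisely diagram \eqref{diagram:p1andp2}.

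First I would verify the naturality square carefully: the associator $a$ is a natural transformation between the functors $(-\otimes-)\otimes-$ and $-\otimes(-\otimes-)$ on $\mathcal{C}\times\mathcal{C}\times\mathcal{C}$, so instantiating the naturality square at the morphism triple $(id_A, !_X, id_B)$ gives exactly the top square above, with no semicartesian hypothesis needed for this step (only the existence of the morphism $!_X$, which the semicartesian structure supplies). Second, I would observe that $(id_A\otimes !_X)\otimes id_B$ postcomposed with $\rho_A\otimes id_B$ is $(\rho_A\circ(id_A\otimes !_X))\otimes id_B = \pi^1_{A,X}\otimes id_B$ by functoriality of $-\otimes id_B$, and symmetrically on the other side using $\lambda_B\circ(!_X\otimes id_B) = \pi^2_{X,B}$. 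Third, I would paste the square on top of the triangle and read off the result.

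I do not expect any serious obstacle here; this is the third instance of a by-now routine pattern, and the only thing to be attentive to is bookkeeping of which coherence triangle is being used — here it is the ``middle unit'' triangle $\rho_A\otimes id_B = (id_A\otimes\lambda_B)\circ a_{A,1,B}$, which is part of the definition of a monoidal category (equivalently derivable from Mac Lane's pentagon and triangle axioms) rather than one of the two triangles in Proposition \ref{prop:consequence2.2.4}. The statement just before the lemma already records this commuting triangle, so I would simply cite it. If one wanted to be fully self-contained one could instead derive it, but that is a standard coherence computation and not worth grinding through; the lemma then follows immediately by the pasting argument above.
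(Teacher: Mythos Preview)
Your proposal is correct and follows exactly the paper's approach: the paper simply says ``Using the same reasoning as before,'' referring to the pattern of Lemma \ref{lem:factoringminorprojections}, namely the naturality square of the associator at $(id_A,!_X,id_B)$ pasted onto the middle-unit coherence triangle recorded just above the lemma. Your identification of the left and right legs with $\pi^1_{A,X}\otimes id_B$ and $id_A\otimes\pi^2_{X,B}$ via functoriality of $\otimes$ is the expected bookkeeping.
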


\begin{lemma}\label{lem:factoringprojections}
The following diagrams commute for all objects $A, B, X$ in a semicartesian monoidal category:
\[\begin{tikzcd}[ampersand replacement=\&]
	\&\& {(X\otimes A)\otimes(X\otimes B)} \&\& {(X\otimes A)\otimes(X\otimes B)} \\
	\&\& {(X\otimes A)\otimes B} \\
	{X\otimes A} \&\& {X\otimes (A\otimes B)} \&\& {A\otimes (X\otimes B)} \& {X\otimes B}
	\arrow["{\pi^1_{X\otimes A,X\otimes B}}"', from=1-3, to=3-1]
	\arrow["{id_X \otimes \pi^1_{A,B}}", from=3-3, to=3-1]
	\arrow["{id_{X \otimes A}\otimes \pi^2_{X,B}}", from=1-3, to=2-3]
	\arrow["{a_{X,A,B}}", from=2-3, to=3-3]
	\arrow["{\pi^2_{X,A}\otimes id_{X\otimes B}}"', from=1-5, to=3-5]
	\arrow["{\pi^2_{X\otimes A,X\otimes B}}", from=1-5, to=3-6]
	\arrow["{\pi^2_{A, X\otimes B}}"', from=3-5, to=3-6]
\end{tikzcd}\]
\end{lemma}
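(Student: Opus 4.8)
The displayed figure consists of two separate triangles sharing the apex $(X\otimes A)\otimes(X\otimes B)$, so the plan is to verify each one independently; in both cases the argument reduces to bifunctoriality of $\otimes$, the defining formulas $\pi^1_{Y,Z}=\rho_Y\circ(id_Y\otimes !_Z)$ and $\pi^2_{Y,Z}=\lambda_Z\circ(!_Y\otimes id_Z)$, and ---for the left triangle only--- the already established Lemma~\ref{lem:factoringminorprojections}.

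For the right-hand triangle the claim is $\pi^2_{X\otimes A, X\otimes B}=\pi^2_{A, X\otimes B}\circ(\pi^2_{X,A}\otimes id_{X\otimes B})$. I would expand the right-hand side using the two formulas above and use the interchange law to merge the two tensor factors, obtaining a morphism of the form $\lambda_{X\otimes B}\circ\bigl((!_A\circ\lambda_A\circ(!_X\otimes id_A))\otimes id_{X\otimes B}\bigr)$. Now $!_A\circ\lambda_A\circ(!_X\otimes id_A)$ is a morphism $X\otimes A\to 1$, and since $1$ is terminal ---this is exactly where semicartesianity enters--- it must coincide with $!_{X\otimes A}$; the expression then collapses to $\lambda_{X\otimes B}\circ(!_{X\otimes A}\otimes id_{X\otimes B})=\pi^2_{X\otimes A, X\otimes B}$. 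No naturality of the coherence isomorphisms is needed for this half.

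For the left-hand triangle, the first step is to observe that Lemma~\ref{lem:factoringminorprojections}, diagram~\ref{diagram:p1}, read with the triple $(A,B,X)$ there relabelled as $(X,A,B)$, yields $(id_X\otimes\pi^1_{A,B})\circ a_{X,A,B}=\pi^1_{X\otimes A, B}$. Consequently the composite along the lower path of the triangle equals $\pi^1_{X\otimes A, B}\circ(id_{X\otimes A}\otimes\pi^2_{X,B})$. Expanding $\pi^1_{X\otimes A, B}=\rho_{X\otimes A}\circ(id_{X\otimes A}\otimes !_B)$ and using bifunctoriality rewrites this as $\rho_{X\otimes A}\circ\bigl(id_{X\otimes A}\otimes(!_B\circ\pi^2_{X,B})\bigr)$; since $!_B\circ\pi^2_{X,B}=!_{X\otimes B}$ by terminality of $1$, this equals $\rho_{X\otimes A}\circ(id_{X\otimes A}\otimes !_{X\otimes B})=\pi^1_{X\otimes A, X\otimes B}$, which is the assertion.

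I do not expect a genuine obstacle here: all the coherence content is already packaged inside Lemma~\ref{lem:factoringminorprojections}, and the present statement only adds two elementary ``collapse a morphism into $1$'' steps on top of it. The one point requiring care is the bookkeeping ---keeping track of the index substitution when invoking diagram~\ref{diagram:p1}, and making explicit at each collapse that what is used is uniqueness of the morphism into the terminal object, so that the proof genuinely rests on---and only on---the semicartesian hypothesis.
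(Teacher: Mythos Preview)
Your argument is correct. For the right-hand triangle it is essentially identical to the paper's: both start from the uniqueness of $!_{X\otimes A}$ to factor it as $!_A\circ\lambda_A\circ(!_X\otimes id_A)$ and then use bifunctoriality of $\otimes$ to split the composite.

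For the left-hand triangle the two proofs diverge slightly in packaging. The paper works from $\pi^1_{X\otimes A,X\otimes B}$ downward, expanding $!_{X\otimes B}$, then invoking directly the coherence identity $\rho_{X\otimes A}=(id_X\otimes\rho_A)\circ a_{X,A,1}$ from Proposition~\ref{prop:consequence2.2.4} together with naturality of the associator, before reassembling the pieces into the composite $(id_X\otimes\pi^1_{A,B})\circ a_{X,A,B}\circ(id_{X\otimes A}\otimes\pi^2_{X,B})$. You instead proceed in the opposite direction and immediately collapse $(id_X\otimes\pi^1_{A,B})\circ a_{X,A,B}$ to $\pi^1_{X\otimes A,B}$ by quoting Lemma~\ref{lem:factoringminorprojections}, so that only the terminality step $!_B\circ\pi^2_{X,B}=!_{X\otimes B}$ remains. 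This is a legitimate and somewhat tidier route: the coherence content (Proposition~\ref{prop:consequence2.2.4} plus naturality of $a$) is already encapsulated in Lemma~\ref{lem:factoringminorprojections}, and your proof simply reuses it rather than rederiving it inline. The trade-off is that the paper's version is self-contained at the level of the basic coherence axioms, while yours makes the dependence on the earlier lemma explicit; neither approach requires anything the other does not ultimately rest on.
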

\begin{proof}
    For the second diagram observe that $!_{X\otimes A} = !_A \circ \lambda_A \circ (!_X \otimes id_A)$ because the terminal arrow is unique. So, by functoriality of $\otimes$: 
    \begin{align*}
        \pi^2_{X\otimes A\otimes X\otimes B} &= \lambda_{X\otimes B} \circ (!_{X\otimes A} \otimes id_{X\otimes B}) \\
        &= \lambda_{X\otimes B} \circ ( (!_A \circ \lambda_A \circ (!_X \otimes id_A)) \otimes id_{X\otimes B})\\
        &= \lambda_{X\otimes B} \circ  (!_A \otimes id_{X\otimes B}) \circ (\lambda_A \circ (!_X \otimes id_A) \otimes id_{X\otimes B}) \\
        &=\lambda_{X\otimes B} \circ  (!_A \otimes id_{X\otimes B}) \circ (\pi^2_{X,A} \otimes id_{X\otimes B})\\
        &=\pi^2_{A,X\otimes B} \circ (\pi^2_{X,A} \otimes id_{X\otimes B})
    \end{align*}

For the first diagram we also consider the naturality of $a$ and Proposition \ref{prop:consequence2.2.4}.
    \begin{align*}
        \pi^1_{X\otimes A\otimes X\otimes B} &= \rho_{X\otimes B} \circ (id_{X\otimes A} \circ !_{X\otimes B}) \\
        &= \rho_{X\otimes A} \circ (id_{X\otimes A} \otimes (!_B \circ \lambda_B \circ (!_X \otimes id_B)))\\
        &= \rho_{X\otimes A} \circ (id_{X\otimes A}\otimes!_B  ) \circ (id_{X\otimes A}\otimes(\lambda_B \circ (!_X \otimes id_B))) \\
        &= \rho_{X\otimes A} \circ (id_{X\otimes A}\otimes!_B  ) \circ (id_{X\otimes A}\otimes \pi^2_{X,B})\\
        &= (id_X \otimes \rho_A) \circ a_{X,B,1} \circ (id_{X\otimes A}\otimes!_B  ) \circ (id_{X\otimes A}\otimes \pi^2_{X,B})\\
        &= (id_X \otimes \rho_A) \circ (id_X \otimes (id_A \otimes !_B) \circ a_{X,A,B}) \circ (id_{X\otimes A}\otimes \pi^2_{X,B})\\
        &= id_X \otimes (\rho_A \circ (id_A \otimes !_B)) \circ a_{X,A,B}) \circ (id_{X\otimes A}\otimes \pi^2_{X,B})\\
        &= (id_X \otimes \pi^1_{A,B}) \circ a_{X,A,B} \circ (id_{X\otimes A}\otimes \pi^2_{X,B}).
    \end{align*}

\end{proof}

Now we are gonna use the braiding. We start using an already known result, stated in \cite{joyal1993braided} with a detailed proof in \cite{johnson2021bimonoidal};
\begin{proposition} (Proposition 1.3.21 \cite{johnson2021bimonoidal})
         The following diagrams commute for all objects $A$ in a braided monoidal category:
\[\begin{tikzcd}[ampersand replacement=\&]
	{A\otimes 1} \&\& {1 \otimes A} \& {1\otimes A} \&\& {A\otimes 1} \\
	\& A \&\&\& A
	\arrow["{\rho_A}"', from=1-1, to=2-2]
	\arrow["{\lambda_A}", from=1-3, to=2-2]
	\arrow["{b_{A,1}}", from=1-1, to=1-3]
	\arrow["{b_{1,A}}", from=1-4, to=1-6]
	\arrow["{\rho_A}", from=1-6, to=2-5]
	\arrow["{\lambda_A}"', from=1-4, to=2-5]
\end{tikzcd}\]
\end{proposition}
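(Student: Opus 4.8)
The plan is to derive both triangles from the hexagon axioms of a braided monoidal category, together with Mac Lane's coherence theorem and the classical lemma that $\lambda_1 = \rho_1 \colon 1 \otimes 1 \to 1$ in any monoidal category (itself a consequence of the pentagon and triangle identities). Everything beyond that is ``axiom of the unit'' bookkeeping of the same flavour as Proposition \ref{prop:consequence2.2.4} and its corollaries above.

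First I would record the auxiliary identity $b_{1,1} = \mathrm{id}_{1 \otimes 1}$. This follows by instantiating one hexagon at $A = B = C = 1$: every associator and unitor occurring in the resulting equation is a canonical isomorphism between parenthesizations of copies of $1$, hence is forced by coherence, and after substituting $\lambda_1 = \rho_1$ the six-term composite telescopes to $b_{1,1}$ on one side and to the identity on the other.

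Next I would instantiate a hexagon with the ``moving'' object equal to $A$ and the two ``fixed'' objects equal to $1$. Using $b_{1,1} = \mathrm{id}$, the naturality of the braiding against $\rho_1$ and $\lambda_1$ (to strip the spurious copies of $1$ produced by tensoring), and the triangle identity in the forms already available above, the equation collapses exactly to $\lambda_A \circ b_{A,1} = \rho_A$, which is the first diagram. For the second diagram: in the symmetric case it is immediate, since then $b_{1,A} = b_{A,1}^{-1}$ and the first identity rearranges to $\rho_A \circ b_{1,A} = \lambda_A$; in the merely braided case one runs the same argument with the other hexagon (the one in which the tensor sits in the first slot of the braiding), or equivalently applies the first identity to the inverse braiding, which satisfies the hexagon axioms with the two tensor slots interchanged.

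The main obstacle is purely the coherence bookkeeping: orienting the two hexagons correctly, and checking that each composite of associators and unitors that appears after a slot is specialised to $1$ really is the canonical comparison map, so that Mac Lane coherence applies and the long composite genuinely telescopes. There is no conceptual content beyond the expected slogan ``the braiding is compatible with the unit''; the argument is the standard Joyal--Street computation, and in a write-up it would be reasonable simply to cite it.
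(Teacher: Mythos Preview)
The paper does not actually give a proof of this proposition: it explicitly states that the result ``is stated in \cite{joyal1993braided} with a detailed proof in \cite{johnson2021bimonoidal}'' and then moves on. So there is no in-paper argument to compare against; the paper's ``proof'' is precisely the citation you anticipate in your last sentence.

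Your outline is the standard Joyal--Street derivation that those references contain: specialise a hexagon so that two of the three objects are the unit, use $\lambda_1 = \rho_1$ and coherence to collapse the resulting associator/unitor composites, and obtain $\lambda_A \circ b_{A,1} = \rho_A$ (then the other hexagon, or the inverse braiding, for the second triangle). This is correct, and your remark that ``in a write-up it would be reasonable simply to cite it'' matches exactly what the paper does. One small caveat: the intermediate step $b_{1,1} = \mathrm{id}_{1\otimes 1}$ is not strictly needed for the usual argument---one can go directly from the hexagon at $(A,1,C)$ (or $(A,B,1)$) together with naturality of $b$ against $\lambda$ or $\rho$ to the desired triangle---so your route is slightly more roundabout than necessary, but it is not wrong.
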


Using the naturality of the braiding we have that $(!_B \otimes id_A) \circ b_{A,B} = b_{A,1} \circ (id_A \otimes !_B)$. Thus
     
\begin{lemma}\label{lem:projectionsandbraiding}
The following diagrams commute for all objects $A,B$ in a braided semicartesian monoidal category:
\[\begin{tikzcd}[ampersand replacement=\&]
	{ A\otimes B} \&\& {B\otimes A} \& { A\otimes B} \&\& {B\otimes A} \\
	\& A \&\&\& B
	\arrow["{b_{A,B}}", from=1-1, to=1-3]
	\arrow["{b_{A,B}}", from=1-4, to=1-6]
	\arrow["{\pi^2_{A,B}}"', from=1-4, to=2-5]
	\arrow["{\pi^1_{B,A}}", from=1-6, to=2-5]
	\arrow["{\pi^2_{B,A}}", from=1-3, to=2-2]
	\arrow["{\pi^1_{A,B}}"', from=1-1, to=2-2]
\end{tikzcd}\]
\end{lemma}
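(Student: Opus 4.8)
The plan is to reduce both squares to the two triangles of the preceding proposition (Proposition 1.3.21 of \cite{johnson2021bimonoidal}), which record the compatibility of the braiding with the unitors, by sliding the terminal arrows across the braiding via naturality. Recall that $\pi^1_{Y,Z} = \rho_Y \circ (id_Y \otimes !_Z)$ and $\pi^2_{Y,Z} = \lambda_Z \circ (!_Y \otimes id_Z)$, so each projection is already a unitor precomposed with a ``delete'' map, which is exactly the shape those triangles can absorb.

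For the first square I would compute $\pi^2_{B,A} \circ b_{A,B} = \lambda_A \circ (!_B \otimes id_A) \circ b_{A,B}$ and then apply the naturality identity $(!_B \otimes id_A) \circ b_{A,B} = b_{A,1} \circ (id_A \otimes !_B)$ recorded just before the statement (this is naturality of $b$ evaluated at $id_A$ and $!_B$). This rewrites the composite as $\lambda_A \circ b_{A,1} \circ (id_A \otimes !_B)$, and the first triangle of the cited proposition, $\lambda_A \circ b_{A,1} = \rho_A$, finishes it: the composite equals $\rho_A \circ (id_A \otimes !_B) = \pi^1_{A,B}$.

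The second square is handled symmetrically: write $\pi^1_{B,A} \circ b_{A,B} = \rho_B \circ (id_B \otimes !_A) \circ b_{A,B}$, use naturality of $b$ at $!_A$ and $id_B$ to get $(id_B \otimes !_A) \circ b_{A,B} = b_{1,B} \circ (!_A \otimes id_B)$, and then invoke the second triangle, $\rho_B \circ b_{1,B} = \lambda_B$, so that $\pi^1_{B,A} \circ b_{A,B} = \lambda_B \circ (!_A \otimes id_B) = \pi^2_{A,B}$.

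I do not expect a genuine obstacle: the whole argument is a two-line chase in each case. The only points that require care are choosing the correct instance of braiding naturality (one tensor factor the identity, the other the unique terminal arrow, on the correct side) and not confusing $\lambda$ with $\rho$ when selecting which of the two triangles of \cite{johnson2021bimonoidal} to apply. The semicartesian hypothesis is used only to guarantee the existence and uniqueness of the arrows $!_A$ and $!_B$; beyond that and Mac Lane coherence, nothing further is needed.
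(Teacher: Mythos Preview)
Your proposal is correct and follows exactly the approach the paper takes: the paper's proof is just the single sentence ``Using the naturality of the braiding we have that $(!_B \otimes id_A) \circ b_{A,B} = b_{A,1} \circ (id_A \otimes !_B)$'' combined with the preceding proposition on braiding--unitor compatibility, and you have simply spelled out both triangles in full.
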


Now we are ready to prove Proposition\ref{prop:equalizes}:
\begin{proof}
    We will show that  $(id_X \otimes f) \circ (id_X \otimes \pi^1_{A,B}) \circ a_{X,A,B} \circ (id_{X\otimes A} \otimes \pi^2_{X,B})\circ e = (id_X \otimes g) \circ (id_X \otimes \pi^2_{A,B}) \circ a_{X,A,B} \circ (id_{X\otimes A} \otimes \pi^2_{X,B}) \circ e$.
\end{proof}

By Lemma \ref{lem:factoringprojections}, we know that 
$$ \pi^1_{X\otimes A\otimes X\otimes B}  = (id_X \otimes \pi^1_{A,B}) \circ a_{X,A,B} \circ (id_{X\otimes A}\otimes \pi^2_{X,B}).$$
By hypothesis, $e$ equalizes $(id_X \otimes f) \circ \pi^1_{X\otimes A, X \otimes B}$
and $(id_X \otimes g) \circ \pi^2_{X\otimes A, X \otimes B}$. Then
{\small 
  \setlength{\abovedisplayskip}{6pt}
  \setlength{\belowdisplayskip}{\abovedisplayskip}
  \setlength{\abovedisplayshortskip}{0pt}
  \setlength{\belowdisplayshortskip}{3pt}
\begin{align*}
   (id_X \otimes f) \circ  \pi^1_{X\otimes A\otimes X\otimes B} \circ e &=  (id_X \otimes g) \circ\pi^2_{X\otimes A, X \otimes B} \circ e \\
   &=  (id_X \otimes g) \circ \pi^2_{A,X\otimes B} \circ (\pi^2_{X,A} \otimes id_{X\otimes B}) \circ e & (\ref{lem:factoringprojections})\\
   &= (id_X \otimes g) \circ \pi^2_{A,X\otimes B} \circ (\pi^1_{A,X} \otimes id_{X\otimes B})\circ(b_{A,X}\otimes id_{X\otimes B})^{-1}\circ e  &(\ref{lem:projectionsandbraiding})\\
\end{align*}
}%

So it only remains to show that $$ \pi^2_{A,X\otimes B} \circ (\pi^1_{A,X} \otimes id_{X\otimes B})\circ(b_{A,X}\otimes id_{X\otimes B})^{-1} = (id_X \otimes \pi^2_{A,B}) \circ a_{X,A,B} \circ (id_{X\otimes A} \otimes \pi^2_{X,B}).$$

To see this we apply the above Lemmas as follows:
\begin{align*}
    &(id_X \otimes \pi^2_{A,B}) \circ a_{X,A,B} \circ (id_{X\otimes A} \otimes \pi^2_{X,B}) \\ &=  (\pi^1_{X,A} \otimes id_B)\circ a_{X,A,B}^{-1}\circ a_{X,A,B} \circ (id_{X\otimes A} \otimes \pi^2_{X,B}) & \ref{diagram:p1andp2}\\
    &= (\pi^1_{X,A} \otimes id_B)\circ (id_{X\otimes A} \otimes \pi^2_{X,B})\\
    &= (\pi^2_{A,X}\otimes id_B)\circ (b_{X,A} \otimes id_B) \circ (id_{X\otimes A} \otimes \pi^2_{X,B}) & \ref{lem:projectionsandbraiding}\\
    &= \pi^2_{A, X\otimes B} \circ a_{A,X,B} \circ (b_{X,A} \otimes id_B) \circ (id_{X\otimes A} \otimes \pi^2_{X,B}) & \ref{diagram:p2} \\
    &= \pi^2_{A, X\otimes B} \circ a_{A,X,B} \circ (id_{A\otimes X} \otimes \pi^2_{X,B})\circ (b_{X,A} \otimes id_{X \otimes B}) & \mbox{Naturality} \\
    &= \pi^2_{A, X\otimes B} \circ a_{A,X,B} \circ (\pi^1_{A\otimes X, X}) \circ a^{-1}_{A\otimes X,X,B} \circ  (b_{X,A} \otimes id_{X \otimes B}) & \ref{diagram:p1andp2} \\
    &= \pi^2_{A, X\otimes B} \circ (\pi^1_{A,X \otimes id_{X\otimes B}})\circ a_{A\otimes X,X,B} \circ a^{-1}_{A\otimes X,X,B} \circ  (b_{X,A} \otimes id_{X \otimes B})  & \mbox{Naturality} \\
    &= \pi^2_{A, X\otimes B} \circ (\pi^1_{A,X \otimes id_{X\otimes B}}) \circ  (b_{X,A} \otimes id_{X \otimes B}).
\end{align*}
Since the category is symmetric, $b_{X,A} = b_{A,X}^{-1}$. Thus the desired equality holds.

\bibliographystyle{abbrv} 
\bibliography{ArXiv_sh_q}

\end{document}